\numberwithin{equation}{section}
\newtheorem{theorem}{Theorem}[section]
\newtheorem{lemma}[theorem]{Lemma}
\newtheorem{proposition}[theorem]{Proposition}
\newtheorem{corollary}[theorem]{Corollary}
\newtheoremstyle{remarkstyle}
{\topsep}{\topsep}{ }{ }{\bfseries}{.~}{ }{\thmname{#1}\thmnumber{ #2}\thmnote{ (#3)}}
\theoremstyle{remarkstyle}
\newtheorem{remark}{Remark}[section]
\newtheorem{definition}{Definition}[section]
\newcommand{\R}{\mathbb R}
\newcommand{\Fc}{\mathcal F}
\newcommand{\vareps}{\varepsilon}
\DeclareMathOperator*{\loc}{loc}
\DeclareMathOperator*{\rad}{rad}
\DeclareMathOperator*{\opt}{opt}
\DeclareMathOperator*{\gamc}{{\gamma_c}}
\DeclareMathOperator*{\sigc}{{\sigma_c}}
\DeclareMathOperator*{\qbo}{{\boldsymbol{q}}}
\DeclareMathOperator*{\rbo}{{\boldsymbol{r}}}
\DeclareMathOperator*{\kbo}{{\boldsymbol{k}}}
\DeclareMathOperator*{\mbo}{{\boldsymbol{m}}}
\DeclareMathOperator*{\abo}{{\boldsymbol{a}}}
\DeclareMathOperator*{\bbo}{{\boldsymbol{b}}}
\DeclareMathOperator*{\lbo}{{\boldsymbol{l}}}
\DeclareMathOperator*{\nbo}{{\boldsymbol{n}}}
\DeclareMathOperator*{\ima}{Im}
\DeclareMathOperator*{\rea}{Re}
\newcommand{\scal}[1]{\left\langle #1 \right\rangle}
\title[Dynamics for fourth-order NLS]
{Dynamics of radial solutions for the focusing fourth-order nonlinear Schr\"odinger equations}
\author[V. D. Dinh]{Van Duong Dinh}
\address[V. D. Dinh]{Laboratoire Paul Painlev\'e UMR 8524, Universit\'e de Lille CNRS, 59655 Villeneuve d'Ascq Cedex, France
and 
Department of Mathematics, HCMC University of Education, 280 An Duong Vuong, Ho Chi Minh, Vietnam}
\email{contact@duongdinh.com}
\subjclass[2010]{35Q44; 35Q55}
\keywords{Fourth-order nonlinear Schr\"odinger equation, Scattering, Blow-up, Ground state, Radial Sobolev embedding}
\begin{document}
	
	\begin{abstract}
	We consider the following class of focusing $L^2$-supercritical fourth-order nonlinear Schr\"odinger equations
	\[
	i\partial_t u - \Delta^2 u + \mu \Delta u = - |u|^\alpha u, \quad (t,x) \in \R \times \R^N,
	\]
	where $N\geq 2$, $\mu \geq 0$, and $\frac{8}{N}<\alpha<\alpha^*$ with $\alpha^*:=\frac{8}{N-4}$ if $N\geq 5$ and $\alpha^*=\infty$ if $N\leq 4$. By using the localized Morawetz estimates and radial Sobolev embedding, we establish the energy scattering below the ground state threshold for the equation with radially symmetric initial data. We also address the existence of finite time blow-up radial solutions to the equation. In particular, we show a sharp threshold for scattering and blow-up for the equation with radial data. Our scattering result not only extends the	one proved by Guo \cite{Guo}, where the scattering was proven for $\mu = 0$, but also provides an alternative simple proof that completely avoids the use of the concentration/compactness and rigidity argument. In the case $\mu > 0$, our blow-up result extends an earlier result proved by Boulenger-Lenzmann \cite{BL}, where the finite time blow-up was shown for initial data with negative energy.
	\end{abstract}

	\maketitle

	\section{Introduction}
	\label{S1}
	\setcounter{equation}{0}
	We are interested in the Cauchy problem for a class of the fourth-order nonlinear Schr\"odinger equations 
	\begin{equation} \label{4NLS}
		\left\{ 
		\begin{array}{rcl}
			i\partial_t u - \Delta^2 u + \mu \Delta u &=& \pm |u|^\alpha u, \quad (t,x) \in \R \times \R^N, \\
			u(0,x)&=& u_0(x),
		\end{array}
		\right.
	\end{equation}
	where $u: \mathbb{R} \times \mathbb{R}^N \rightarrow \mathbb{C}$, $u_0: \mathbb{R}^N \rightarrow \mathbb{C}$, $\mu \in \R$, and $\alpha>0$. The plus (resp. minus) sign in front of the nonlinearity corresponds to the defocusing (resp. focusing) case. The fourth-order Schr\"odinger equation has been introduced by Karpman \cite{Karpman} and Karpman-Shagalov \cite{KS} in order to take into consideration the role of small fourth-order dispersion terms in the propagation of intense laser beams in a bulk medium with Kerr nonlinearity. 
	
	The equation \eqref{4NLS} has formally the conservation of mass and energy
	\begin{align*}
	M(u(t))&=\int |u(t,x)|^2 dx = M(u_0), \tag{Mass} \\
	E_\mu(u(t)) &= \frac{1}{2} \int |\Delta u(t,x)|^2 dx +\frac{\mu}{2} \int |\nabla u(t,x)|^2 dx \pm \frac{1}{\alpha+2} \int |u(t,x)|^{\alpha+2} dx = E_\mu(u_0). \tag{Energy}
	\end{align*}
	In the case $\mu=0$, the equation \eqref{4NLS} enjoys the scaling invariance
	\[
	u_\lambda(t,x):= \lambda^{\frac{4}{\alpha}} u(\lambda^4 t, \lambda x), \quad \lambda>0.
	\]
	A direct computation shows that
	\[
	\|u_\lambda(0)\|_{\dot{H}^\gamma} = \lambda^{\gamma+\frac{4}{\alpha}-\frac{N}{2}} \|u_0\|_{\dot{H}^\gamma},
	\]
	where $\dot{H}^\gamma$ is the homogeneous Sobolev space of order $\gamma$. Thus, we define the critical exponent
	\begin{align} \label{defi-gamc}
	\gamc:= \frac{N}{2} -\frac{4}{\alpha}.
	\end{align}
	We also define the exponent
	\begin{align} \label{defi-sigc}
	\sigc:= \frac{2-\gamc}{\gamc} = \frac{8-(N-4)\alpha}{N\alpha-8}.
	\end{align}
	In view of the conservation laws above, the equation is said to be mass-critical (resp. mass and energy intercritical, and energy-critical) if $\gamc=0$ (resp. $0<\gamc<2$, and $\gamc=2$).
	
	In the last decade, the fourth-order Schr\"odinger equation has been attracting a lot of interest in mathematics, numerics and physics. Fibich-Ilan-Papanicolaou \cite{FIP} studied the existence of global $H^2$-solutions and gave some numerical observations showing the existence of finite time blow-up solutions. Artzi-Koch-Saut \cite{BaKS} established sharp dispersive estimates for the fourth-order Schr\"odinger operator. Pausader \cite{Pausader-DPDE, Pausader-DCDS, Pausader-JFA} and Miao-Xu-Zhao \cite{MXZ-1, MXZ-2} investigated the asymptotic behavior (or energy scattering) of global $H^2$-solutions in the energy-critical case. In the mass and energy intercritical case, the energy scattering for the defocusing problem was shown by Pausader \cite{Pausader-DPDE} in dimensions $N\geq 5$ and Pausader-Xia \cite{PX} in low dimensions (see also \cite{MZ}). In the mass-critical case, the asymptotic behavior of global $L^2$-solutions was proved by Pausader-Shao \cite{PS}. The asymptotic behavior of global solutions below the energy space was studied by Miao-Wu-Zhang \cite{MWZ} and the first author \cite{Dinh-NA}. In a seminal work \cite{BL} (see Theorem \ref{theo-blow-BL}), Boulenger-Lenzmann established the existence of finite time blow-up $H^2$-solutions. Dynamical properties such as mass-concentration and limiting profile of blow-up $H^2$-solutions were studied by Zhu-Yang-Zhang \cite{ZYZ-DPDE} and the first author \cite{Dinh-JDDE}. Dynamical properties of blow-up solutions below the energy space were studied in \cite{ZYZ-NA, Dinh-DPDE}. 
	
	Motivated by aforementioned results, we study the energy scattering below the ground state and the finite time blow-up of radial solutions to the focusing problem \eqref{4NLS}. Before stating our results, let us recall some known results related to \eqref{4NLS}. The local well-posedness for \eqref{4NLS} in the energy space $H^2$ was established in \cite{Pausader-DPDE} (see also. \cite{Dinh-BBMS}). It was claimed without proof (see \cite[Proposition 4.1]{Pausader-DPDE} that \eqref{4NLS} is locally well-posed in $H^2$ for $0<\alpha <\alpha^*$, where
	\begin{align} \label{defi-alpha-star}
	\alpha^*:= \left\{
	\renewcommand*{\arraystretch}{1.2}
	\begin{array}{cl}
	\frac{8}{N-4} &\text{if } N\geq 5, \\
	\infty &\text{if } N\leq 4.
	\end{array}
	\right.
	\end{align}
	The author in \cite{Pausader-DPDE} referred to \cite{Cazenave} for a similar proof of this result. Due to the appearance of biharmonic operator, the nonlinearity needs to have at least two derivatives in order to apply the argument of \cite{Cazenave}. One can use Strichartz estimates with a gain of derivatives (see \eqref{str-est-gain}) to lower the regularity requirement of nonlinearity. However, a careful consideration (see Remark \ref{rem-lwp}) shows that we only have the local well-posedness for \eqref{4NLS} in $H^2$ for 
	\begin{align} \label{cond-alpha}
	\left\{
	\renewcommand*{\arraystretch}{1.3}
	\begin{array}{ccl}
	\frac{2}{N} \leq \alpha <\alpha^* &\text{if}& N\geq 3, \\
	\alpha \geq 1 &\text{if} &N=1,2.
	\end{array}
	\right.
	\end{align}
	In the energy subcritical case, i.e. $\gamc<2$ or $0<\alpha <\alpha^*$, local solutions satisfy the following blow-up alternative: either $T^*=+\infty$ or $T^*<+\infty$ and 
	\[
	\lim_{t\nearrow T^*} \|\Delta u(t)\|_{L^2} =\infty,
	\]
	where $T^*$ is the maximal forward time of existence. 
	
	The existence of blow-up $H^2$-solutions to the focusing problem \eqref{4NLS} was recently established by Boulenger-Lenzmann \cite{BL}. This work gives rigorous mathematical proofs for previous numerical results in \cite{BFM, BF, BFM-SIAM}. More precisely, we have the following result.
	\begin{theorem}[\cite{BL}]  \label{theo-blow-BL}
		\begin{enumerate}
			\item (Mass-critical case) Let $N\geq 2$, $\mu \geq 0$, and $\alpha=\frac{8}{N}$. Let $u_0 \in H^2$ be radially symmetric satisfying $E_\mu(u_0)<0$. It holds that 
			\begin{itemize}
				\item if $\mu>0$, then the corresponding solution to the focusing problem \eqref{4NLS} blows up in finite time;
				\item if $\mu=0$, then the corresponding solution to the focusing problem \eqref{4NLS} either blows up in finite time or blows up in infinite time and satisfies
				\[
				\|\Delta u(t)\|_{L^2} \geq Ct^2, \quad \forall t\geq t_0
				\]
				with some constant $C= C(u_0)>0$ and $t_0=t_0(u_0)>0$. 
			\end{itemize}
			\item (Mass and enery intercritical case) Let $N\geq 2$, $\mu  \in \R$, $\frac{8}{N}<\alpha<\alpha^*$, and $\alpha \leq 8$. Let $u_0\in H^2$ be radially symmetric and satisfy one of the following conditions:
			\begin{itemize}
				\item If $\mu \ne 0$, we assume that
				\[
				\left\{
				\begin{array}{ll}
				E_\mu(u_0) <0 &\text{if } \mu>0 \\
				E_\mu(u_0) <-\kappa \mu^2 M(u_0) &\text{if } \mu<0
				\end{array}
				\right.
				\]
				with some constant $\kappa = \kappa(N,\alpha)>0$.
				\item If $\mu=0$, we assume either $E_0(u_0)<0$ or, if $E_0(u_0) \geq 0$, we suppose that
				\[
				E_0(u_0) [M(u_0)]^{\sigc} < E_0(Q) [M(Q)]^{\sigc}
				\]
				and
				\[
				\|\Delta u_0\|_{L^2} \|u_0\|^{\sigc}_{L^2}  > \|\Delta Q\|_{L^2} \|Q\|^{\sigc}_{L^2},
				\]
				where $Q$ is the ground state related to the elliptic equation
				\begin{align} \label{ell-equ-Q}
				\Delta^2 Q + Q - |Q|^\alpha Q=0.
				\end{align}
			\end{itemize}
		Then the corresponding solution to the focusing problem \eqref{4NLS} blows up in finite time.
		\item (Energy-critical case) Let $N\geq 5$, $\mu\in \R$, and $\alpha=\frac{8}{N-4}$. Let $u_0 \in H^2$ be radially symmetric and satisfy one of the following properties:
		\begin{itemize}
			\item If $\mu \ne 0$, we assume that
			\[
			\left\{
			\begin{array}{ll}
			E_\mu(u_0) <0 &\text{if } \mu>0 \\
			E_\mu(u_0) <-\kappa \mu^2 M(u_0) &\text{if } \mu<0
			\end{array}
			\right.
			\]
			with some constant $\kappa=\kappa(N)>0$. 
			\item If $\mu=0$, we assume that either $E_0(u_0)<0$ or, if $E_0(u_0)\geq 0$, we suppose that 
			\[
			E_0(u_0)<E_0(W)
			\]
			and 
			\[
			\|\Delta u_0\|_{L^2}>\|\Delta W\|_{L^2},
			\]
			where $W$ is the unique radial, non-negative solution to the elliptic equation
			\begin{align} \label{ell-equ-W}
			\Delta^2 W - |W|^{\frac{8}{N-4}} W=0.
			\end{align}
		\end{itemize}
		Then the corresponding solution to the focusing problem \eqref{4NLS} blows up in finite time.
		\end{enumerate}
	\end{theorem}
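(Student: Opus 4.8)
The plan is to establish all three regimes through a single \emph{localized virial} (Morawetz-type) argument, the point being that for the biharmonic operator the weight $|x|^2$ must be replaced by a smooth radial truncation. Fix a radial $\varphi\in C^\infty(\R^N)$, viewed as a function of $r=|x|$, with $\varphi\ge0$, $\varphi(r)=r^2$ for $r\le1$, $\varphi$ constant for $r\ge10$, and $\varphi''\le2$; for $R>0$ set $\varphi_R(x):=R^2\varphi(|x|/R)$ and define
\[
\mathcal M_R[u(t)]:=2\,\ima\int_{\R^N}\overline{u(t,x)}\,\nabla\varphi_R(x)\cdot\nabla u(t,x)\,dx ,
\]
a regularized version of $\tfrac{d}{dt}\int|x|^2|u|^2\,dx$, well defined along $H^2$ solutions. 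Differentiating in $t$, using \eqref{4NLS}, and carrying out the standard but lengthy computation of the commutators with $\Delta^2-\mu\Delta$, one obtains an identity of the schematic form $\tfrac{d}{dt}\mathcal M_R[u(t)]=a\|\Delta u\|_{L^2}^2+a'\mu\|\nabla u\|_{L^2}^2-b\|u\|_{L^{\alpha+2}}^{\alpha+2}+\mathcal R_R[u(t)]$, with $a,a',b>0$ depending only on $N,\alpha$ and an error $\mathcal R_R$ supported in $\{|x|\ge R\}$ whose coefficients are built from $\partial^2\varphi_R,\dots,\partial^4\varphi_R$ and are therefore $O(1),O(R^{-1}),O(R^{-2})$. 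Eliminating the nonlinear term by conservation of energy, $\|u\|_{L^{\alpha+2}}^{\alpha+2}=(\alpha+2)\big(\tfrac12\|\Delta u\|_{L^2}^2+\tfrac\mu2\|\nabla u\|_{L^2}^2-E_\mu(u_0)\big)$, recasts this as
\[
\frac{d}{dt}\mathcal M_R[u(t)] = A\,E_\mu(u_0) - B\,\|\Delta u(t)\|_{L^2}^2 + C\,\mu\,\|\nabla u(t)\|_{L^2}^2 + \mathcal R_R[u(t)],
\]
with $A>0$, with $C\mu\|\nabla u\|_{L^2}^2\le0$ when $\mu\ge0$, and with $B=0$ exactly in the mass-critical case $\alpha=\tfrac8N$ but $B>0$ precisely when $\alpha>\tfrac8N$. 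When $\mu<0$ one absorbs $C\mu\|\nabla u\|_{L^2}^2$ using $|\mu|\|\nabla u\|_{L^2}^2\le\eta\|\Delta u\|_{L^2}^2+C_\eta\mu^2M(u_0)$, the residual $C_\eta\mu^2M(u_0)$ being exactly what the threshold $E_\mu(u_0)<-\kappa\mu^2M(u_0)$ is designed to beat.

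The technical heart, and in my view the main obstacle, is to show that $\mathcal R_R$ is harmless. Its dominant contribution is nonlinear, bounded by $\int_{|x|\ge R}|u|^{\alpha+2}\le\|u\|_{L^\infty(|x|\ge R)}^{\alpha}\|u\|_{L^2(|x|\ge R)}^{2}$, and it is here that \emph{radiality} is indispensable: the weighted Strauss estimate $\|u\|_{L^\infty(|x|\ge R)}^2\lesssim R^{-(N-1)}\|u\|_{L^2}\|\nabla u\|_{L^2}$, together with the interpolation $\|\nabla u\|_{L^2}\le\|u\|_{L^2}^{1/2}\|\Delta u\|_{L^2}^{1/2}$, gives
\[
|\mathcal R_R[u(t)]| \lesssim R^{-\frac{(N-1)\alpha}{2}}\,M(u_0)^{\theta}\,\|\Delta u(t)\|_{L^2}^{\alpha/4} + (\text{terms carrying a strictly larger negative power of }R)
\]
for a suitable $\theta=\theta(N,\alpha)>0$. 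Now $\alpha/4<2$ precisely when $\alpha<8$ — exactly the extra hypothesis imposed in the intercritical case, and automatic in the energy-critical case since $N\ge5$ — so Young's inequality yields $|\mathcal R_R[u(t)]|\le\eta\|\Delta u(t)\|_{L^2}^2+C_\eta R^{-\beta}$ for any $\eta>0$ and some $\beta>0$; in the mass-critical case the same power counting leaves the bound in terms of $\|\nabla u\|_{L^2}$ rather than $\|\Delta u\|_{L^2}$, and in the energy-critical case one works with the Sobolev-critical exponent $\tfrac{2N}{N-4}$ and the characterization of $W$ via the Aubin--Talenti inequality. Choosing $\eta$ small relative to $B$ (or to the $\mu$-coercivity in the mass-critical case) and then $R=R(u_0)$ large, we arrive at
\[
\frac{d}{dt}\mathcal M_R[u(t)] \le A\,E_\mu(u_0) - \tfrac{B}{2}\|\Delta u(t)\|_{L^2}^2 + C\,\mu\,\|\nabla u(t)\|_{L^2}^2 + C_\eta R^{-\beta}.
\]

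It remains to drive the right-hand side strictly below zero and conclude. If $E_\mu(u_0)<0$ — all the $\mu\ne0$ subcases and the negative-energy parts — this is immediate for $R$ large. If $E_0(u_0)\ge0$ (the intercritical and energy-critical cases with $\mu=0$) one invokes the sharp Gagliardo--Nirenberg inequality, whose optimizers are precisely the ground state $Q$ of \eqref{ell-equ-Q}, resp. the Aubin--Talenti function $W$ of \eqref{ell-equ-W}: the hypotheses $E_0(u_0)[M(u_0)]^{\sigc}<E_0(Q)[M(Q)]^{\sigc}$ and $\|\Delta u_0\|_{L^2}\|u_0\|_{L^2}^{\sigc}>\|\Delta Q\|_{L^2}\|Q\|_{L^2}^{\sigc}$ (resp. $E_0(u_0)<E_0(W)$ and $\|\Delta u_0\|_{L^2}>\|\Delta W\|_{L^2}$) imply, by a standard continuity argument based on conservation of $M$ and $E_0$, that the ``above ground state'' region is invariant under the flow and, quantitatively, that $\inf_t\|\Delta u(t)\|_{L^2}^2$ exceeds the critical value by a fixed positive amount; inserting this lower bound back into the displayed inequality again yields $\tfrac{d}{dt}\mathcal M_R[u(t)]\le-\delta<0$ for all $t\in[0,T^*)$.

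Finally, in the supercritical regimes ($\alpha>\tfrac8N$, so $B>0$) one combines $\tfrac{d}{dt}\mathcal M_R\le-\tfrac B2\|\Delta u(t)\|_{L^2}^2$, valid once $\|\Delta u(t)\|_{L^2}^2\ge 2|A E_\mu(u_0)+C_\eta R^{-\beta}|/B$ (which holds for $t$ large since $\mathcal M_R$ is decreasing), with the a priori bound $|\mathcal M_R[u(t)]|\lesssim R\,M(u_0)^{3/4}\|\Delta u(t)\|_{L^2}^{1/2}$, to obtain a differential inequality of the form $y'\gtrsim R^{-4}y^{4}$ for $y:=-\mathcal M_R\ge0$; this forces $y$, hence $\|\Delta u(t)\|_{L^2}$, to blow up in finite time, i.e. $T^*<\infty$. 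In the mass-critical case with $\mu>0$ the same scheme works with $|C|\mu\|\nabla u(t)\|_{L^2}^2$ playing the role of $\tfrac B2\|\Delta u(t)\|_{L^2}^2$ and $|\mathcal M_R[u(t)]|\lesssim R\,M(u_0)^{1/2}\|\nabla u(t)\|_{L^2}$, again giving $T^*<\infty$. In the purely biharmonic mass-critical case $\mu=0$, $\alpha=\tfrac8N$, there is no such coercive term and $\mathcal M_R$ cannot be bounded through conserved quantities alone; an additional bootstrap using $\tfrac{d}{dt}\mathcal M_R\lesssim A\,E_0(u_0)+C_\eta R^{-\beta}\|\Delta u(t)\|_{L^2}^{2/N}$ then yields only the stated dichotomy, the lower bound $\|\Delta u(t)\|_{L^2}\gtrsim t^2$ coming from combining $\mathcal M_R[u(t)]\le-\delta t$ (for $t$ large) with $|\mathcal M_R[u(t)]|\lesssim R\,M(u_0)^{3/4}\|\Delta u(t)\|_{L^2}^{1/2}$.
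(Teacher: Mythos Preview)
The paper does not give its own proof of this theorem: it is quoted verbatim from Boulenger--Lenzmann \cite{BL} as background. What the paper \emph{does} prove, in Section~5, are the extensions Theorems~\ref{theo-blup-inter} and~\ref{theo-blup-ener}, and for these it follows exactly the scheme you outline --- localized virial functional $M_{\varphi_R}$, the radial Morawetz inequality (quoted from \cite{BL} as Lemma~\ref{lem-rad-mora-est}), control of the exterior nonlinear term via the Strauss inequality \eqref{rad-sobo-emb} combined with $\|\nabla u\|_{L^2}\le\|u\|_{L^2}^{1/2}\|\Delta u\|_{L^2}^{1/2}$, Young's inequality on the error $R^{-(N-1)\alpha/2}\|\Delta u\|_{L^2}^{\alpha/4}$, the variational step showing $K_\mu(u(t))\le-\delta$ (Lemma~\ref{lem-K-mu}), and finally the ODE argument $y'\gtrsim y^4$ from $|M_{\varphi_R}|\lesssim R\|\Delta u\|_{L^2}^{1/2}$. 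So your proposal matches both \cite{BL} and the paper's own arguments for the related theorems essentially line for line.

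Two small corrections. First, the restriction in part~(2) is $\alpha\le 8$, not $\alpha<8$; at the borderline $\alpha=8$ the exponent $\alpha/4$ equals $2$ and Young's inequality gives nothing, but the error term is then exactly $CR^{-4(N-1)}\|\Delta u\|_{L^2}^2$, which is absorbed directly into $-\tfrac{B}{2}\|\Delta u\|_{L^2}^2$ for $R$ large (the paper does this explicitly in Lemma~\ref{lem-est-mora-rad}). Second, your treatment of the mass-critical case $\mu=0$ is the one place that is genuinely sketchy: you write that $\mathcal M_R[u(t)]\le-\delta t$ ``for $t$ large'', but since the error $\mathcal R_R$ contains a term $R^{-\beta}\|\Delta u(t)\|_{L^2}^{2/N}$ that could in principle grow, obtaining a uniform negative upper bound on $\tfrac{d}{dt}\mathcal M_R$ requires letting $R$ depend on $t$ and running a bootstrap; this is the step in \cite{BL} that produces only the dichotomy rather than outright finite-time blow-up. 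Your sketch gestures at this but does not carry it out.
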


	Our first result is the following energy scattering below the ground state for the focusing problem \eqref{4NLS}.
	\begin{theorem} \label{theo-scat}
		Let $N\geq 2$, $\mu\geq 0$, and $\frac{8}{N}<\alpha<\alpha^*$. Let $u_0 \in H^2$ be radially symmetric and satisfy
		\begin{align} 
		E_\mu(u_0) [M(u_0)]^{\sigc} &< E_0(Q) [M(Q)]^{\sigc}, \label{cond-ener} \\
		\|\Delta u_0\|_{L^2} \|u_0\|^{\sigc}_{L^2} & < \|\Delta Q\|_{L^2} \|Q\|^{\sigc}_{L^2}. \label{cond-grad-gwp}
		\end{align}
		Then the corresponding solution to the focusing problem \eqref{4NLS} exists globally in time and scatters in $H^2$ in both directions, i.e. there exist $u_\pm \in H^2$ such that
		\[
		\lim_{t\rightarrow \pm \infty} \|u(t) - e^{-it(\Delta^2-\mu \Delta)} u_\pm\|_{H^2} =0.
		\]
	\end{theorem}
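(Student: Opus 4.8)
The plan is to run a concentration--compactness--free argument in the spirit of Dodson--Murphy: a variational analysis that pins down the size of the solution, a space--localized Morawetz (virial) estimate whose far--field errors are absorbed by the radial Sobolev embedding, and a Strichartz/perturbation bootstrap that turns the resulting spacetime decay into a finite scattering norm. \textbf{Step 1 (variational rigidity, global existence, coercivity).} I would begin from the sharp Gagliardo--Nirenberg inequality attached to \eqref{ell-equ-Q}, namely $\|f\|_{L^{\alpha+2}}^{\alpha+2}\le C_{\mathrm{GN}}\|\Delta f\|_{L^2}^{N\alpha/4}\|f\|_{L^2}^{\alpha+2-N\alpha/4}$, whose optimizer is $Q$, together with the Pohozaev identities for $Q$ (note that $\tfrac{N\alpha}4-2=\tfrac\alpha2\gamc$ and $\alpha+2-\tfrac{N\alpha}4=\tfrac\alpha2(2-\gamc)$, so these exponents are linked precisely through $\sigc=\tfrac{2-\gamc}{\gamc}$, cf.\ \eqref{defi-sigc}). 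Inserting this into $E_\mu(u)=\tfrac12\|\Delta u\|_{L^2}^2+\tfrac\mu2\|\nabla u\|_{L^2}^2-\tfrac1{\alpha+2}\|u\|_{L^{\alpha+2}}^{\alpha+2}$ and discarding the nonnegative term $\tfrac\mu2\|\nabla u\|_{L^2}^2$ (here $\mu\ge0$ is used), one bounds $E_\mu(u)$ below by a potential--well function of $\|\Delta u\|_{L^2}\|u\|_{L^2}^{\sigc}$, with barrier at $\|\Delta Q\|_{L^2}\|Q\|_{L^2}^{\sigc}$ and height $E_0(Q)[M(Q)]^{\sigc}$. A continuity-in-time argument using mass and energy conservation then upgrades the strict hypotheses \eqref{cond-ener}--\eqref{cond-grad-gwp} to a uniform deficit: there is $\delta=\delta(u_0)>0$ with $\|\Delta u(t)\|_{L^2}\|u(t)\|_{L^2}^{\sigc}\le(1-\delta)\|\Delta Q\|_{L^2}\|Q\|_{L^2}^{\sigc}$ for all $t$. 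Hence $\sup_t\|u(t)\|_{H^2}<\infty$, so the solution is global by the blow-up alternative, and the same deficit yields coercivity of the virial functional: writing $\mathcal V_\mu(u)$ for the quantity in the Boulenger--Lenzmann identity $\tfrac{d^2}{dt^2}\int|x|^2|u|^2\,dx=\mathcal V_\mu(u(t))$ --- a positive combination of $\|\Delta u\|_{L^2}^2$ and $\mu\|\nabla u\|_{L^2}^2$ minus a fixed multiple of $\|u\|_{L^{\alpha+2}}^{\alpha+2}$ --- the subthreshold condition gives, via the Gagliardo--Nirenberg inequality again, $\mathcal V_\mu(u(t))\ge\mathcal V_0(u(t))\ge c(\delta)\|u(t)\|_{L^{\alpha+2}}^{\alpha+2}>0$ for all $t$, which is exactly the regime complementary to the one driving the blow-up in Theorem~\ref{theo-blow-BL}.

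\textbf{Step 2 (localized Morawetz estimate).} Next I would take a radial weight $a=a_R$ with $a(x)=|x|^2$ for $|x|\le1$, $a(x)=|x|$ for $1\le|x|\le R$, and $a$ constant for $|x|\ge2R$, smoothly joined so that $\nabla a$ is bounded uniformly in $R$, $\Delta a\ge0$ on $|x|\le R$, and every derivative of $a$ of order $\ge2$ supported in $|x|\ge R$ carries a negative power of $R$. With $M_R(t):=\tfrac{d}{dt}\int a|u(t)|^2\,dx$, integration by parts gives $|M_R(t)|\lesssim\|\nabla a\|_{L^\infty}\|u(t)\|_{H^2}^2\lesssim1$, uniformly in $R$ and $t$. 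Differentiating once more and using \eqref{4NLS} represents $M_R'(t)$ as the weighted virial density: on $|x|\lesssim R$ it is bounded below, by Step~1, by $c\int_{|x|\le R}\langle x\rangle^{-1}|u(t,x)|^{\alpha+2}\,dx$, while the remaining terms are supported in $|x|\ge R$ and are either $O(R^{1-k})\|u(t)\|_{H^2}^2$ (from a derivative of $a$ of order $k\ge2$) or controlled by $\|u(t)\|_{L^{\alpha+2}(|x|\ge R)}^{\alpha+2}$. For the latter I invoke the radial Sobolev embedding $\||x|^{(N-1)/2}f\|_{L^\infty(\R^N)}\lesssim\|f\|_{L^2}^{1/2}\|\nabla f\|_{L^2}^{1/2}$; since $\alpha>\tfrac8N>\tfrac2{N-1}$ this gives $\|u(t)\|_{L^{\alpha+2}(|x|\ge R)}^{\alpha+2}\lesssim R^{-\rho}$ with $\rho=\tfrac{(N-1)\alpha}2-1>0$, uniformly in $t$. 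Thus $M_R'(t)\ge c\int_{|x|\le R}\langle x\rangle^{-1}|u(t)|^{\alpha+2}-CR^{-\nu}$ for some $\nu>0$; integrating over $[0,T]$ with $|M_R|\lesssim1$, letting $R=R(T)\to\infty$ appropriately (e.g.\ $R\sim T^{1/\nu}$) to absorb the $CR^{-\nu}T$ error, and passing to the limit by monotone convergence, one reaches the global spacetime bound $\int_0^\infty\!\int_{\R^N}\langle x\rangle^{-1}|u(t,x)|^{\alpha+2}\,dx\,dt\le C(u_0)<\infty$, and likewise on $(-\infty,0]$.

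\textbf{Step 3 (from the Morawetz bound to scattering).} With $\sup_t\|u(t)\|_{H^2}<\infty$ available, this is a by-now-standard perturbation argument. Using the Strichartz estimates for the free propagator $e^{-it(\Delta^2-\mu\Delta)}$, in particular the gain-of-derivatives estimates recalled in the introduction, and the radial Sobolev embedding to convert the weighted density into Strichartz-admissible norms, one proves a perturbation lemma: there is $\vareps_0>0$, depending only on $\sup_t\|u(t)\|_{H^2}$, such that on any time interval $I$ with $\int_I\!\int\langle x\rangle^{-1}|u|^{\alpha+2}\le\vareps_0$ the solution stays within a bounded Strichartz distance of the linear flow, so that $\|u\|_{S(I)}\lesssim1$ in the relevant scattering norm $\|\cdot\|_{S}$. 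By Step~2 the half-line $[0,\infty)$ decomposes into \emph{finitely many} consecutive intervals on which $\int\!\int\langle x\rangle^{-1}|u|^{\alpha+2}$ equals $\vareps_0$; summing the lemma over them gives $\|u\|_{S([0,\infty))}<\infty$, and then the Duhamel formula together with Strichartz forces $e^{it(\Delta^2-\mu\Delta)}u(t)$ to be Cauchy in $H^2$ as $t\to+\infty$, which produces $u_+$; the case $t\to-\infty$ is identical.

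\textbf{Main obstacle.} The genuinely heavy step is Step~2. For the biharmonic operator the second virial derivative produces far more terms than in the second-order case --- up to two derivatives on $u$, derivatives of $a$ up to order four, and numerous cross terms --- and one has to check that every contribution outside the coercive bulk piece is either matched with a negative power of $R$ or is a nonlinear tail killed by the radial Sobolev embedding; the choice of a degree-one weight away from the origin is what forces $\nabla^2a=O(|x|^{-1})$ and thereby controls the $\dot H^2$-tails, and this is one of the two points where the radial hypothesis really enters. The delicate sub-point inside Step~2 is establishing the coercivity of the \emph{localized} functional from the subthreshold condition, i.e.\ controlling the commutators between the weight $a$ and $\Delta^2$. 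The second, milder, difficulty is that the Morawetz estimate only delivers the weighted spacetime integral of $|u|^{\alpha+2}$ rather than the scaling-critical quantity $\|u(t)\|_{\dot H^{\gamc}}$, so that converting it into a genuine Strichartz bound in Step~3 again relies on the radial decay --- the other point where radial symmetry is used.
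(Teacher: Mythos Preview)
Your broad architecture (variational rigidity $\to$ localized virial/Morawetz $\to$ scattering via Strichartz) is the same as the paper's, and Step~1 is essentially correct. The genuine gap is in Step~2, and it then forces a different Step~3 than the one you sketch.

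\textbf{Step 2.} The hybrid weight $a=|x|^2$ on $|x|\le 1$ and $a=|x|$ on $1\le|x|\le R$ does not yield the coercive lower bound you claim. On the $|x|$-region, for radial $u$, the two leading biharmonic kinetic contributions cancel exactly: using the paper's formula one has $8\sum_{k,l,m}\partial^2_{lm}a\,\partial^2_{kl}\bar u\,\partial^2_{mk}u=\tfrac{8(N-1)}{r^3}|\partial_r u|^2$ while $-4\sum_{k,l}\partial^2_{kl}\Delta a\,\partial_k\bar u\,\partial_l u=-\tfrac{8(N-1)}{r^3}|\partial_r u|^2$; what remains are lower-order terms $\propto (N-1)(N-3)\int r^{-3}|\nabla u|^2$, $\propto (N-1)(N-3)(N-5)\int r^{-5}|u|^2$, whose signs change with $N$, together with the nonlinear piece $-\tfrac{2\alpha(N-1)}{\alpha+2}\int r^{-1}|u|^{\alpha+2}$, which has the \emph{wrong} sign for a lower bound on $M_R'$. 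Nothing in Step~1---which is coercivity of the $|x|^2$-weighted functional $K_0$---transfers to this different combination. On the $|x|^2$-region your cutoff scale is $1$, so the commutators of $\chi_1$ with $\Delta^2$ are $O(1)$, not $o(1)$, and the localized coercivity of Lemma~\ref{lem-coer-2} (which needs $R\ge R_0(u_0,Q)$ precisely to make those errors $O(R^{-2})$) is unavailable. In short, the finite Lin--Strauss-type bound $\int_0^\infty\!\int\langle x\rangle^{-1}|u|^{\alpha+2}<\infty$ is not established by your argument, and no such bound is known for the focusing problem below threshold.

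What the paper actually does is take $\varphi_R=|x|^2$ on \emph{all} of $|x|\le R$ with $R$ large (so $|M_{\varphi_R}(t)|\lesssim R$, not $\lesssim 1$), prove the localized coercivity $K_0(\chi_R u)\ge\nu\|\chi_R u\|_{L^{\alpha+2}}^{\alpha+2}$ for $R\ge R_0$, and after balancing $R$ against $|I|$ obtain only the \emph{sublinear} estimate of Proposition~\ref{prop-mora-est},
\[
\int_I\|u(t)\|_{L^{\alpha+2}}^{\alpha+2}\,dt\;\lesssim\;|I|^{1/3}.
\]

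\textbf{Step 3.} Because the spacetime integral is not finite on $[0,\infty)$, your ``divide into finitely many intervals'' perturbation argument cannot be run. The paper replaces it by the scattering criterion of Lemma~\ref{lem-small-scat} (smallness of $\|e^{-i(t-T)(\Delta^2-\mu\Delta)}u(T)\|_{L^{\kbo}([T,\infty),L^{\rbo})}$ suffices) and then verifies the criterion by splitting the Duhamel integral at time $T-\varepsilon^{-\sigma}$: the near-past piece $F_1$ is small via H\"older, the radial Sobolev embedding, and the $|I|^{1/3}$ bound, while the far-past piece $F_2$ is small via the dispersive decay \eqref{disper-est}. This Duhamel-splitting mechanism is exactly what accommodates a merely sublinear Morawetz bound, and it is the substantive piece missing from your outline.
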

	
	\begin{remark}
		The condition $\mu\geq 0$ is due to global in time Strichartz estimates (see Section $\ref{S2}$) and the variational analysis (see Section $\ref{S3}$). When $\mu<0$, only local in time Strichartz estimates are available (see \cite{BaKS}), so it is not appropriate to discuss the energy scattering in this case. 
	\end{remark}
	
	\begin{remark}
		Theorem $\ref{theo-scat}$ extends the one proved by Guo \cite{Guo}, where the energy scattering below the ground state for \eqref{4NLS} with $\mu=0$ was studied by using the concentration/compactness and rigidity argument of Kenig-Merle \cite{KM}. We remark that the proof in \cite{Guo} relies on the following inhomogeneous Strichartz estimates (see \cite[Proposition 2.2]{Guo}) which do not seem clear to us
		\begin{align} \nonumber
		\left\|\int_0^t e^{i(t-s)\Delta^2} |u(s)|^\alpha u(s) ds \right\|_{L^q(I, L^r)} \lesssim \||u|^\alpha u\|_{L^{\frac{q}{\alpha+1}}(I,L^{\frac{r}{\alpha+1}})},
		\end{align}
		where
		\[
		(q,r)= \left(\frac{(N+4)\alpha}{4},\frac{(N+4)\alpha}{4}\right), \quad (q,r)= \left(2\alpha,\frac{N\alpha}{2}\right).
		\]
		In fact, according to the best known inhomogeneous Strichartz estimates for Schr\"odinger-type equations (including the biharmonic NLS), which were proved independently by Foschi \cite{Foschi} and Vilela \cite{Vilela}, we need to check the following conditions:
		\begin{align} 
		\frac{1}{q}+\frac{N}{r}<\frac{N}{2}, \quad \frac{1}{m}+\frac{N}{n} <\frac{N}{2} \label{cond-1}\\
		\frac{4}{q}+\frac{N}{r}+\frac{4}{m}+\frac{N}{n} = N \nonumber 
		\end{align}
		and
		\begin{align}
		\frac{N-4}{N} \leq \frac{r}{n} \leq \frac{N}{N-4}, \label{cond-3}
		\end{align}
		where $(m,n)$ is the dual pair of $\left(\frac{q}{\alpha+1},\frac{r}{\alpha+1} \right)$. It is easy to see that \eqref{cond-1} and \eqref{cond-3} are not satisfied for all $\frac{8}{N}<\alpha<\alpha^*$. Therefore, the result stated in \cite{Guo} is doubtful.
	\end{remark}

	Theorem \ref{theo-scat} extends the energy scattering for the classical NLS obtained in \cite{HR} to the biharmonic NLS. The proof of Theorem \ref{theo-scat} is based on recent arguments of Dodson-Murphy \cite{DM} and Dinh-Keraani \cite{DK} using localized Morawetz estimates and radial Sobolev embedding. It gives an alternative simple proof for the energy scattering that completely avoids the use of the concentration/compactness and rigidity argument. 
	
	Let us briefly describe the strategy of the proof of Theorem \ref{theo-scat}. It is divided into three main steps as follows. 
	
	\noindent {\bf Step 1. Scattering criteria.} By using Strichartz estimates and the standard contraction mapping argument, we show that if $u$ is a global solution to the focusing problem \eqref{4NLS} satisfying
	\[
	\|u\|_{L^\infty(\R, H^2)} \leq A
	\]
	for some constant $A>0$, then there exists $\delta =\delta(A)>0$ such that if 
	\begin{align} \label{scat-crite-intro}
	\|e^{-i(t-T)(\Delta^2-\mu\Delta)} u(T)\|_{L^{\kbo}([T,\infty), L^{\rbo})} <\delta
	\end{align}
	for some $T>0$, where
	\[
	\kbo:=\frac{4\alpha(\alpha+2)}{8-(N-4)\alpha}, \quad \rbo:=\alpha+2,
	\]
	then the solution scatters in $H^2$ forward in time.
	
	\noindent {\bf Step 2. Localized Morawetz estimates.}
	By using some variational analysis, we prove that under the assumptions \eqref{cond-ener} and \eqref{cond-grad-gwp}, the corresponding solution to the focusing problem \eqref{4NLS} exists globally in time, and there exist $\nu=\nu(u_0,Q)>0$ and $R_0=R_0(u_0,Q)>0$ such that for any $R\geq R_0$,
	\begin{align} \label{coer-intro}
	K_0(\chi_R(u(t)) \geq \nu \|\chi_R u(t)\|^{\alpha+2}_{L^{\alpha+2}}
	\end{align}
	for all $t \in \R$. Here 
	\[
	K_0(u):= \|\Delta u\|^2_{L^2} -\frac{N\alpha}{4(\alpha+2)} \|u\|^{\alpha+2}_{L^{\alpha+2}}
	\]
	is the virial functional and $\chi_R(x) = \chi(x/R)$ with $\chi \in C^\infty_0(\R^N)$ satisfying $0\leq \chi \leq 1$ and
	\[
	\chi(x) = \left\{
	\begin{array}{ccl}
	1 &\text{if}& |x| \leq 1/2, \\
	0 &\text{if}& |x| \geq 1.
	\end{array}
	\right.
	\]
	Thanks to the coercivity property \eqref{coer-intro}, localized Morawetz estimates, and the radial Sobolev embedding, we show that for any time interval $I\subset \R$,
	\begin{align} \label{space-time-est}
	\int_I \|u(t)\|^{\alpha+2}_{L^{\alpha+2}} dt \lesssim |I|^{\frac{1}{3}}.
	\end{align}
	
	\noindent {\bf Step 3. Energy scattering.}
	By Step 1, it suffices to find $T>0$ so that \eqref{scat-crite-intro} holds. To reach this goal, let $\vareps>0$ be a small parameter. For $T>\vareps^{-\sigma}$ with some $\sigma>0$ to be chosen later, we write
	\[
	e^{-i(t-T)(\Delta^2-\mu\Delta)} u(T) = e^{-it (\Delta^2-\mu\Delta)} u_0 + F_1(t) + F_2(t),
	\]
	where
	\[
	F_1(t):=i \int_I e^{-i(t-s)(\Delta^2-\mu\Delta)} |u(s)|^\alpha u(s) ds, \quad F_2(t):= i\int_J e^{-i(t-s)(\Delta^2-\mu\Delta)} |u(s)|^\alpha u(s) ds
	\]
	with $I:= [T-\vareps^{-\sigma}, T]$ and $J:= [0,T-\vareps^{-\sigma}]$. The smallness of the linear part follows easily from Strichartz estimates by taking $T>\vareps^{-\sigma}$ sufficiently large. The smallness of $F_1$ follows from Strichartz estimates, \eqref{space-time-est} and the radial Sobolev embedding. Finally, the smallness of $F_2$ is based on dispersive estimates and \eqref{space-time-est}. We refer the reader to Section $\ref{S4}$ for more details.

	Our next result concerns the finite time blow-up in the mass and energy intercritical case.
	\begin{theorem} \label{theo-blup-inter}
		Let $N\geq 2$, $\mu\geq 0$, $\frac{8}{N}<\alpha<\alpha^*$, and $\alpha \leq 8$. Let $u_0 \in H^2$ be radially symmetric satisfying \eqref{cond-ener} and
		\begin{align} \label{cond-grad-blup}
		\|\Delta u_0\|_{L^2} \|u_0\|^{\sigc}_{L^2} > \|\Delta Q\|_{L^2} \|Q\|^{\sigc}_{L^2}.
		\end{align}
		Then the corresponding solution to the focusing problem \eqref{4NLS} blows up in finite time.
	\end{theorem}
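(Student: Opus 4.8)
The plan is to adapt the localized virial argument of Boulenger--Lenzmann \cite{BL}, feeding into it the variational information carried by \eqref{cond-ener} and \eqref{cond-grad-blup} instead of a sign hypothesis on the energy. Fix a radial cutoff $\varphi\in C^\infty(\R^N)$ with $\varphi(x)=|x|^2$ for $|x|\leq 1$, $\varphi$ constant for $|x|\geq 2$, and satisfying the convexity conditions $\varphi''\leq 2$, $\nabla^2\varphi\leq 2\,\mathrm{Id}$, $\Delta\varphi\leq 2N$ everywhere; for $R>0$ set $\varphi_R(x):=R^2\varphi(x/R)$ and define the localized virial quantity
\[
\mathcal{M}_{\varphi_R}[u(t)]:=2\,\ima\int_{\R^N}\bar u(t)\,\nabla\varphi_R\cdot\nabla u(t)\,dx .
\]
A computation based on \eqref{4NLS} and repeated integration by parts (lengthy because of the biharmonic operator, and justified for $H^2$ solutions by a standard approximation argument) produces an identity of the form
\[
\frac{d}{dt}\mathcal{M}_{\varphi_R}[u(t)]=8K_\mu(u(t))+\mathcal{A}_R(u(t)),\qquad K_\mu(u):=2K_0(u)+\mu\|\nabla u\|_{L^2}^2 ,
\]
where the term $8K_\mu$ is the contribution of the ball $\{|x|\leq R\}$, on which $\varphi_R(x)=|x|^2$ and the identity reduces to the sharp virial identity $\frac{d^2}{dt^2}\int|x|^2|u(t)|^2\,dx=8K_\mu(u(t))$, and $\mathcal{A}_R(u(t))$ collects all terms supported on $\{|x|\geq R\}$ carrying derivatives of $\varphi_R$ of order $\geq 2$. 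The convexity properties of $\varphi$ ensure that the linear localization terms are either of a favourable sign or bounded by $R^{-2}$ times mass- and $H^2$-quantities of $u$ (with an extra term $R^{-4}\|u(t)\|_{L^2}^2$ coming from $\Delta^3\varphi_R$), so that the only genuinely delicate remaining contribution in $\mathcal{A}_R(u(t))$ is the nonlinear one, $\int_{\{|x|\geq R\}}|u(t)|^{\alpha+2}\,dx$.

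The main obstacle is the estimate of this last term for radial data, and it is precisely here that the hypothesis $\alpha\leq 8$ is used. The radial Sobolev embedding $\||x|^{(N-1)/2}u\|_{L^\infty}^2\lesssim\|u\|_{L^2}\|\nabla u\|_{L^2}$ together with the interpolation $\|\nabla u\|_{L^2}\leq\|u\|_{L^2}^{1/2}\|\Delta u\|_{L^2}^{1/2}$ gives
\[
\int_{\{|x|\geq R\}}|u(t)|^{\alpha+2}\,dx\leq\|u(t)\|_{L^\infty(\{|x|\geq R\})}^\alpha\,\|u(t)\|_{L^2}^2\lesssim R^{-\frac{(N-1)\alpha}{2}}\,M(u_0)^{\frac{3\alpha}{8}+1}\,\|\Delta u(t)\|_{L^2}^{\frac{\alpha}{4}} .
\]
Since $\alpha\leq 8$, the exponent $\alpha/4$ does not exceed $2$, so Young's inequality bounds this by $\vareps\|\Delta u(t)\|_{L^2}^2$ plus a constant that tends to $0$ as $R\to\infty$ when $\alpha<8$ (and that is absent for $R$ large when $\alpha=8$); the remaining, more numerous, terms of $\mathcal{A}_R$ are treated by the same radial tools and negative powers of $R$. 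We conclude that for every $\vareps>0$,
\[
\frac{d}{dt}\mathcal{M}_{\varphi_R}[u(t)]\leq 8K_\mu(u(t))+\vareps\|\Delta u(t)\|_{L^2}^2+o_R(1)
\]
uniformly in $t$, where $o_R(1)\to 0$ as $R\to\infty$.

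It then remains to invoke the variational analysis, carried out exactly as in the proof of Theorem \ref{theo-scat}: using $\mu\geq 0$, conservation of mass and energy, the sharp Gagliardo--Nirenberg inequality and the Pohozaev identities for $Q$, the assumptions \eqref{cond-ener} and \eqref{cond-grad-blup} imply that the solution is defined on a maximal interval on which $\|\Delta u(t)\|_{L^2}\|u(t)\|_{L^2}^{\sigc}$ stays strictly above $\|\Delta Q\|_{L^2}\|Q\|_{L^2}^{\sigc}$, and that there exist $\delta>0$ and $\eta>0$, depending only on $u_0$ and $Q$, with $\|\Delta u(t)\|_{L^2}\geq\eta$ and $K_\mu(u(t))\leq-\delta\|\Delta u(t)\|_{L^2}^2$ for all such $t$ (this uses the identity $K_\mu(u)=(2-\tfrac{N\alpha}{4})\|\Delta u\|_{L^2}^2+(1-\tfrac{N\alpha}{4})\mu\|\nabla u\|_{L^2}^2+\tfrac{N\alpha}{2}E_\mu(u_0)$, in which the middle term is nonpositive because $\mu\geq 0$ and $\alpha>\tfrac{8}{N}$, after which the strict inequalities \eqref{cond-ener}, \eqref{cond-grad-blup} close the estimate). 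Choosing $\vareps=\delta/2$ and then $R$ so large that $o_R(1)\leq\delta\eta^2\leq\delta\|\Delta u(t)\|_{L^2}^2$, we obtain
\[
\frac{d}{dt}\mathcal{M}_{\varphi_R}[u(t)]\leq-c\,\|\Delta u(t)\|_{L^2}^2\leq-c\,\eta^2<0
\]
for some $c>0$ and all $t$ in the maximal interval. Since also $|\mathcal{M}_{\varphi_R}[u(t)]|\leq 2\|\nabla\varphi_R\|_{L^\infty}\|u(t)\|_{L^2}\|\nabla u(t)\|_{L^2}\lesssim R\,M(u_0)^{3/4}\,\|\Delta u(t)\|_{L^2}^{1/2}$, the function $m(t):=\mathcal{M}_{\varphi_R}[u(t)]$ must become negative (as $m'(t)\leq-c\eta^2<0$) and thereafter satisfy $m'(t)\leq-c_R\,m(t)^4$ with $c_R>0$; a standard ODE comparison then shows that $m(t)\to-\infty$, hence $\|\Delta u(t)\|_{L^2}\to+\infty$, as $t$ approaches a finite time. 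By the blow-up alternative, this is precisely the assertion that the solution blows up in finite time.
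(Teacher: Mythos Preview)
Your argument is correct and follows essentially the same route as the paper: localized virial identity for $\varphi_R$, radial Sobolev embedding plus Young's inequality (with the restriction $\alpha\leq 8$) to control the exterior nonlinear term, the variational bound on $K_\mu$ coming from \eqref{cond-ener}--\eqref{cond-grad-blup}, and the final ODE comparison to force finite-time blow-up. Your treatment is in fact slightly cleaner than the paper's in one place: you obtain the \emph{multiplicative} bound $K_\mu(u(t))\leq -\delta\|\Delta u(t)\|_{L^2}^2$ directly from the identity $K_\mu=(2-\tfrac{N\alpha}{4})\|\Delta u\|_{L^2}^2+(1-\tfrac{N\alpha}{4})\mu\|\nabla u\|_{L^2}^2+\tfrac{N\alpha}{2}E_\mu(u_0)$ together with \eqref{cond-ener}, \eqref{est-solu-blup}, whereas the paper only records the additive bound $K_\mu\leq -\delta$ (Lemma~\ref{lem-K-mu}) and then recovers the multiplicative control via a two-case split on the size of $\|\Delta u(t)\|_{L^2}$ (Lemma~\ref{lem-est-mora-rad}). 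One small point of exposition: the ``linear'' error terms in $\mathcal{A}_R$ involve $\|\nabla u(t)\|_{L^2}^2\lesssim \|\Delta u(t)\|_{L^2}$, which is \emph{not} a priori bounded in the blow-up regime, so these terms must also be passed through Young's inequality to reach your final form $\vareps\|\Delta u(t)\|_{L^2}^2+o_R(1)$; your concluding inequality already accommodates this, but the sentence claiming they are ``bounded by $R^{-2}$ times mass- and $H^2$-quantities'' should be sharpened.
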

	
	\begin{remark}
		The restriction $\alpha \leq 8$ is technical due to the radial Sobolev embedding (see Lemma $\ref{lem-est-mora-rad}$).
	\end{remark}
	
	\begin{remark}
		In the case $\mu>0$, Theorem $\ref{theo-blup-inter}$ extends the result in \cite{BL}, where the finite time blow-up for radial initial data with negative energy was shown.
	\end{remark}

	\begin{remark}
		In \cite{BCGJ}, a finite time blow-up result for radial non-negative energy $H^2$-solutions for \eqref{4NLS} with $\mu>0$ was shown. However, this result is not directly applicable to Theorem \ref{theo-blup-inter}.
	\end{remark}
	
	\begin{remark}
		We will see from Remark \ref{rem-data} that there is no $u_0 \in H^2$ satisfies \eqref{cond-ener} and 
		\[
		\|\Delta u_0\|_{L^2} \|u_0\|^{\sigc}_{L^2} = \|\Delta Q\|_{L^2} \|Q\|^{\sigc}_{L^2}.
		\] 
		Thus, Theorems \ref{theo-scat} and \ref{theo-blup-inter} give a sharp threshold for the scattering and finite time blow-up for \eqref{4NLS}.
	\end{remark}

	The proof of Theorem \ref{theo-blup-inter} is based on a variational analysis and an ODE argument of Boulenger-Lenzmann \cite{BL}. We first show that under the assumptions \eqref{cond-ener} and \eqref{cond-grad-blup}, there exists $\delta =\delta(u_0,Q)>0$ such that the corresponding solution to the focusing problem \eqref{4NLS} satisfies
	\[
	K_\mu(u(t)) \leq -\delta
	\] 
	for all $t$ in the existence time, where
	\[
	K_\mu(u):= \|\Delta u\|^2_{L^2} +\frac{\mu}{2} \|\nabla u\|^2_{L^2} -\frac{N\alpha}{4(\alpha+2)} \|u\|^{\alpha+2}_{L^{\alpha+2}}.
	\]
	Thanks to the above bound and localized Morawetz estimates, we show that there exists $a=a(u_0,Q)>0$ such that
	\[
	\frac{d}{dt} M_{\varphi_R}(t) \leq -a \|\Delta u(t)\|^2_{L^2}
	\]
	for all $t$ in the existence time. With this bound at hand, an ODE argument of \cite{BL} shows that the solution must blow up in finite time. We refer the reader to Section $\ref{S5}$ for more details.

	Finally, we have the following finite time blow-up in the energy critical case.
	\begin{theorem} \label{theo-blup-ener}
		Let $N\geq 5$, $\mu\geq 0$, and $\alpha=\frac{8}{N-4}$. Let $u_0 \in H^2$ be radially symmetric satisfying
		\begin{align} 
		E_\mu(u_0) &< E_0(W), \label{cond-ener-ener} \\
		\|\Delta u_0\|_{L^2} &> \|\Delta W\|_{L^2}, \label{cond-grad-ener}
		\end{align}
		where $W$ is the unique non-negative radial solution to \eqref{ell-equ-W}. Then the corresponding solution to the focusing problem \eqref{4NLS} blows up in finite time.
	\end{theorem}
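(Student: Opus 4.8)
The plan is to run the variational-plus-localized-virial argument of Boulenger--Lenzmann, as in the proof of Theorem~\ref{theo-blup-inter}, with the Aubin--Talenti-type function $W$ replacing the ground state $Q$. First I record two facts about $W$. Since $\alpha+2=\frac{2N}{N-4}$, pairing the equation \eqref{ell-equ-W} with $W$ gives $\|\Delta W\|_{L^2}^2=\|W\|_{L^{2N/(N-4)}}^{2N/(N-4)}$, whence $E_0(W)=\frac{2}{N}\|\Delta W\|_{L^2}^2$; moreover $W$ optimizes the Sobolev embedding $\dot H^2\hookrightarrow L^{2N/(N-4)}$, which I use in the form
\[
\|f\|_{L^{2N/(N-4)}}^{2N/(N-4)}\le \|\Delta W\|_{L^2}^{-8/(N-4)}\,\|\Delta f\|_{L^2}^{2N/(N-4)},\qquad f\in\dot H^2 .
\]

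Next I establish the trapping. Because $\alpha=\frac{8}{N-4}$ makes $\frac{N\alpha}{4(\alpha+2)}=1$, one has the identity
\[
K_\mu(v)=\frac{2N}{N-4}E_\mu(v)-\frac{4}{N-4}\|\Delta v\|_{L^2}^2-\frac{N+4}{2(N-4)}\,\mu\,\|\nabla v\|_{L^2}^2 .
\]
Applying this to the solution and combining conservation of $E_\mu$, the Sobolev inequality above, and $\mu\ge0$ in a continuity argument, one shows that \eqref{cond-grad-ener} propagates: $\|\Delta u(t)\|_{L^2}>\|\Delta W\|_{L^2}$ throughout the maximal interval of existence --- for if equality held at some $t_1$, the Sobolev inequality would force $E_\mu(u(t_1))\ge \frac12\|\Delta W\|_{L^2}^2-\frac{N-4}{2N}\|\Delta W\|_{L^2}^2=E_0(W)$, contradicting \eqref{cond-ener-ener}. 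Feeding this back into the identity and discarding the nonnegative $\mu$-term, with $\rho:=E_0(W)-E_\mu(u_0)>0$ one gets
\[
K_\mu(u(t))\le \frac{2N}{N-4}E_\mu(u_0)-\frac{4}{N-4}\|\Delta u(t)\|_{L^2}^2\le -\frac{2N}{N-4}\,\rho=:-\delta<0 ,
\]
and, crucially, $\tfrac{16N}{N-4}E_\mu(u_0)<\tfrac{16N}{N-4}E_0(W)=\tfrac{32}{N-4}\|\Delta W\|_{L^2}^2<\tfrac{32}{N-4}\|\Delta u(t)\|_{L^2}^2$ --- a strict gap exploited below.

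Then I run the localized virial. Let $\varphi_R(x)=R^2\varphi(x/R)$ be the radial cutoff of \cite{BL} ($\varphi(x)=|x|^2$ for $|x|\le1$, constant for $|x|$ large) and $M_{\varphi_R}(t)=2\ima\int_{\R^N}\overline{u(t)}\,\nabla\varphi_R\cdot\nabla u(t)\,dx$. The fourth-order localized virial identity of \cite{BL} gives an estimate of the form $\frac{d}{dt}M_{\varphi_R}(t)\le 8K_\mu(u(t))+C\,\mathcal E_R(t)$, where the error $\mathcal E_R(t)$ is essentially supported in $\{|x|\gtrsim R\}$. Estimating its terms by the radial Sobolev embedding, the interpolation $\|\nabla u\|_{L^2}^2\lesssim\|u\|_{L^2}\|\Delta u\|_{L^2}$, conservation of mass, and --- on the Sobolev-critical piece $\int_{|x|>R}|u|^{2N/(N-4)}\,dx$ --- Young's inequality when $N>5$ (the case $N=5$ reducing directly to $\lesssim R^{-16}\|\Delta u\|_{L^2}^2$), one obtains $\mathcal E_R(t)\le c_R\big(\|\Delta u(t)\|_{L^2}^2+1\big)$ with $c_R\to0$ as $R\to\infty$. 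Combining with $8K_\mu(u(t))\le\frac{16N}{N-4}E_\mu(u_0)-\frac{32}{N-4}\|\Delta u(t)\|_{L^2}^2$ and the strict gap from the trapping step, fixing $R$ large yields a constant $a>0$ with $\frac{d}{dt}M_{\varphi_R}(t)\le -a\|\Delta u(t)\|_{L^2}^2$ on the whole interval of existence. Since $|M_{\varphi_R}(t)|\lesssim R\,\|u(t)\|_{L^2}\|\nabla u(t)\|_{L^2}\lesssim_R M(u_0)^{3/4}\|\Delta u(t)\|_{L^2}^{1/2}$, putting $z(t):=\int_0^t\|\Delta u(s)\|_{L^2}^2\,ds$ (so that $z(t)\ge\|\Delta W\|_{L^2}^2\,t$ and $z'(t)=\|\Delta u(t)\|_{L^2}^2$) and integrating gives $a\,z(t)-M_{\varphi_R}(0)\le -M_{\varphi_R}(t)\lesssim_R M(u_0)^{3/4}(z'(t))^{1/2}$, hence $z'(t)\gtrsim z(t)^2$ for $t$ large, which is incompatible with global existence. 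Therefore the forward maximal time is finite; the backward statement follows analogously by the time reversal $u(t,x)\mapsto\overline{u(-t,x)}$.

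The main obstacle is the fourth-order localized virial identity together with the term-by-term control of its error $\mathcal E_R$: for the biharmonic operator this is considerably heavier than the classical NLS computation (higher derivatives of $\varphi_R$, more commutators), and it is precisely the technical input imported from \cite{BL}. A subtlety particular to the energy-critical case is that $K_\mu$ is not coercive on its own, so the errors close only through the combination of the sharp Sobolev relations for $W$ with the strict inequality $E_\mu(u_0)<E_0(W)$; the case $E_\mu(u_0)\ge0$ needs exactly this, which is why \eqref{cond-ener-ener} cannot be relaxed.
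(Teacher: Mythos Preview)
Your proposal is correct and follows essentially the same route as the paper: variational trapping $\|\Delta u(t)\|_{L^2}>\|\Delta W\|_{L^2}$ and $K_\mu(u(t))\le-\delta$ (Lemmas~\ref{lem-est-solu-ener} and~\ref{lem-K-mu-ener}), the localized virial bound $\frac{d}{dt}M_{\varphi_R}(t)\le-a\|\Delta u(t)\|_{L^2}^2$ (Lemma~\ref{lem-est-mora-rad-ener}), and an ODE blow-up argument. The only cosmetic differences are the virial constant ($16$ rather than your $8$), the paper's two-case split on the size of $\|\Delta u(t)\|_{L^2}^2$ when absorbing the error terms versus your direct strict-gap argument, and the exponent in the final ODE ($z'\gtrsim z^4$ versus your $z'\gtrsim z^2$); none of these affect the substance.
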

	
	The proof of this result follows the same argument as in the proof of Theorem $\ref{theo-blup-inter}$ using \eqref{cond-ener-ener} and \eqref{cond-grad-ener}.
	
	\begin{remark}
	In the case $\mu>0$, this result extends the one in \cite{BL}, where the finite time blow-up for radial initial data with negative energy was shown.	
	\end{remark}
	
	This paper is organized as follows. In Section $\ref{S2}$, we give some preliminaries including dispersive and Strichartz estimates. In Section $\ref{S3}$, we prove the local well-posedness for \eqref{4NLS}. The proof of the energy scattering below the ground state is given in Section $\ref{S4}$. Finally, the finite time blow-up given Theorem $\ref{theo-blup-inter}$ and Theorem $\ref{theo-blup-ener}$ will be proved in Section $\ref{S5}$. 
	
	\section{Strichartz estimates}
	\label{S2}
	\setcounter{equation}{0}
	Let $\mu \in \R$ and $e^{-it(\Delta^2 -\mu \Delta)}$ be the propagator for the free fourth-order Schr\"odinger equation 
	\[
	i\partial_t u -\Delta^2 u +\mu \Delta u =0. 
	\]
	The Schr\"odinger operator is defined by
	\[
	e^{-it(\Delta^2 -\mu \Delta)} f = \Fc^{-1}[e^{-it(|\xi|^4 +\mu|\xi|^2)} \Fc (f)],
	\]
	where $\Fc$ and $\Fc^{-1}$ are the Fourier and inverse Fourier transforms given by
	\[
	\Fc(f)(\xi) := (2\pi)^{-\frac{N}{2}} \int_{\R^N} e^{-ix\cdot\xi} f(x) dx, \quad \Fc^{-1}(g)(x) := (2\pi)^{-\frac{N}{2}} \int_{\R^N} e^{ix\cdot \xi} g(\xi) d\xi.
	\]
	Let $I_\mu$ be the distributional kernel of $e^{-it(\Delta^2-\mu \Delta)}$, i.e. 
	\[
	e^{-it(\Delta^2-\mu\Delta)} f(x) = I_\mu(t,x) \ast f(x),
	\]
	where $\ast$ is the convolution operator. We see that
	\[
	I_\mu(t,x):= (2\pi)^{-N} \int_{\R^N} e^{-it(|\xi|^4+\mu|\xi|^2) - i x \cdot \xi} d\xi.
	\] 
	Note that $I_\mu(t,x) = J_{-\mu}(-t, x)$, where
	\[
	J_\mu(t,x):= (2\pi)^{-N} \int_{\R^N} e^{it(|\xi|^4-\mu|\xi|^2)- ix\cdot \xi} d\xi.
	\]
	Dispersion estimates for $J_\mu(t)$ have been studied by Ben-Artzi-Koch-Saut \cite{BaKS}. More precisely, the following estimates hold true:
	\begin{itemize}
		\item ($\mu=0$)
		\begin{align} \label{dis-mu0}
		|D^\beta J_0(t,x)| \leq C t^{-\frac{N+|\beta|}{4}} \left(1+t^{-\frac{1}{4}} |x|\right)^{\frac{|\beta|-N}{3}}
		\end{align}
		for all $t>0$ and all $x \in \R^N$.
		\item ($\mu\in \{0,\pm 1\}$)
		\begin{align} \label{dis-mu}
		|D^\beta J_\mu(t,x)| \leq C t^{-\frac{N+|\beta|}{4}} \left(1+t^{-\frac{1}{4}}|x|\right)^{\frac{|\beta|-N}{3}}
		\end{align}
		for all $0<t \leq 1$ and all $x \in \R^N$, or all $t>0$ and all $|x| \geq t$.
		\item ($\mu=-1$)
		\begin{align} \label{dis-mu-1}
		|D^\beta J_{-1}(t,x)| \leq C t^{-\frac{N+|\beta|}{2}} \left(1+t^{-\frac{1}{2}} |x|\right)^{|\beta|}
		\end{align}
		for all $t\geq 1$ and all $|x| \leq t$.
	\end{itemize}
	Here $D$ stands for the differentiation in the $x$ variable. Useful consequences of \eqref{dis-mu0}, \eqref{dis-mu} and \eqref{dis-mu-1} are the followings:
	\begin{align*} 
	|J_\mu(t,x)| \leq C |t|^{-\frac{N}{4}}
	\end{align*}
	for all $t\ne 0$ and if $\mu=1$, we require $|t| \leq 1$. Note that $J_\mu(-t,x) = \overline{J_\mu(t,x)}$. It follows that
	\begin{align} \nonumber
	|I_\mu(t,x)| = |J_{-\mu}(-t,x)| \leq C |t|^{-\frac{N}{4}}
	\end{align}
	for all $t\ne 0$ and if $\mu=-1$, we require $|t| \leq 1$. This implies that
	\begin{align*}
	\|e^{-it(\Delta^2-\mu\Delta)} f\|_{L^\infty} \leq \|I_\mu(t)\|_{L^\infty} \|f\|_{L^1} \leq C |t|^{-\frac{N}{4}} \|f\|_{L^1}
	\end{align*}
	which together with the Riesz-Thorin interpolation theorem imply
	\begin{align}  \label{disper-est}
	\|e^{-it(\Delta^2-\mu\Delta)} f\|_{L^r} \leq C|t|^{-\frac{N}{4}\left(1-\frac{2}{r}\right)} \|f\|_{L^{r'}}
	\end{align}
	for all $r\in [2,\infty]$, all $f\in L^{r'}$, where $r'$ is the conjugate exponent of $r$ and all $t\ne 0$ and if $\mu=-1$, we require $|t| \leq 1$. Since we are interested in the energy scattering for \eqref{4NLS}, we only consider $\mu\geq 0$ throughout this paper.

	Let $I\subset \R$ and $q,r \in [1,\infty]$. We define the mixed norm 
	\[
	\|u\|_{L^q(I,L^r)} := \left( \int_I \left( \int_{\R^N} |u(t,x)|^r dx \right)^{\frac{q}{r}} dt \right)^{\frac{1}{q}}
	\]
	with a usual modification when either $q$ or $r$ are infinity. When $q=r$, we use the notation $L^q(I \times \R^N)$ instead of $L^q(I,L^q)$.
	\begin{definition}
		A pair $(q,r)$ is said to be {\bf Biharmonic admissible}, or $(q,r)\in B$ for short, if
		\[
		\frac{4}{q}+\frac{N}{r} =\frac{N}{2}, \quad \left\{
		\renewcommand*{\arraystretch}{1.3}
		\begin{array}{ll}
		r \in \left[2,\frac{2N}{N-4}\right] &\text{if } N\geq 5, \\
		r \in [2,\infty) &\text{if } N=4, \\
		r \in [2,\infty] &\text{if } N\leq 3.
		\end{array}
		\right.
		\]
		A pair $(m,n)$ is said to be {\bf Schr\"odinger admissible}, or $(m,n)\in S$ for short, if
		\[
		\frac{2}{m}+\frac{N}{n} =\frac{N}{2}, \quad \left\{
		\renewcommand*{\arraystretch}{1.3}
		\begin{array}{ll}
		n \in \left[2,\frac{2N}{N-2}\right] &\text{if } N\geq 3, \\
		n \in [2,\infty) &\text{if } N=2, \\
		n \in [2,\infty] &\text{if } N=1.
		\end{array}
		\right.
		\]
	\end{definition}
	Let $I\subset \R$ be an interval. We denote the Strichartz norm and its dual norm respectively by
	\[
	\|u\|_{S(I,L^2)}:= \sup_{(q,r) \in B} \|u\|_{L^q(I,L^r)}, \quad \|u\|_{S'(I,L^2)} := \inf_{(q,r)\in B} \|u\|_{L^{q'}(I,L^{r'})}.
	\]
	
	Thanks to dispersive estimates \eqref{disper-est} and the abstract theory of Keel-Tao \cite{KT}, we have the following Strichartz estimates.
	
	\begin{proposition} [Strichartz estimates \cite{Pausader-DPDE,Dinh-DPDE}] \label{prop-str-est}
		Let $\mu \geq 0$ and $I \subset \R$ be an interval. Then there exists a constant $C>0$ independent of $I$ such that the following estimates hold true.
		\begin{itemize}
			\item (Homogeneous estimates)
			\begin{align}
			\|e^{-it(\Delta^2-\mu \Delta)} f\|_{S(I,L^2)} \leq C \|f\|_{L^2}. \label{homo-str-est}
			\end{align}
			\item (Inhomogeneous estimates)
			\begin{align}
			\left\| \int_0^t e^{-i(t-s)(\Delta^2-\mu\Delta)} F(s)ds \right\|_{S(I,L^2)} &\leq C\|F\|_{S'(I,L^2)}. \label{inho-str-est-1}
			\end{align}
	\end{itemize}
	\end{proposition}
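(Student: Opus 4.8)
The statement to prove is Proposition \ref{prop-str-est} (Strichartz estimates for the fourth-order Schrödinger propagator with $\mu \geq 0$).

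The plan is to derive Strichartz estimates from the abstract machinery of Keel--Tao \cite{KT}, using the dispersive bound \eqref{disper-est} as the only input beyond the trivial $L^2$-conservation of the free propagator. Let me sketch the key steps.

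First, I would verify the two hypotheses of the Keel--Tao theorem for the one-parameter unitary group $U(t) := e^{-it(\Delta^2 - \mu\Delta)}$. The energy estimate $\|U(t)f\|_{L^2} = \|f\|_{L^2}$ is immediate from Plancherel, since the Fourier multiplier $e^{-it(|\xi|^4 + \mu|\xi|^2)}$ has modulus one. The untruncated decay estimate takes the form $\|U(t)U(s)^*\|_{L^1 \to L^\infty} \lesssim |t-s|^{-\sigma}$ with $\sigma = N/4$; since $\mu \geq 0$ this holds for all $t \neq s$ (the restriction $|t| \leq 1$ in \eqref{disper-est} is only needed when $\mu = -1$, which we have excluded). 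Thus the group satisfies the hypotheses with decay exponent $\sigma = N/4$.

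Second, I would translate the abstract admissibility condition into the concrete Biharmonic-admissible pairs. The Keel--Tao condition is that $(q,r)$ be $\sigma$-admissible: $\frac{1}{q} \leq \sigma(\frac12 - \frac1r)$, i.e. $\frac{4}{q} \leq N(\frac12 - \frac1r)$, together with $2 \leq q, r \leq \infty$ and $(q,r,\sigma) \neq (2, \infty, 1)$. The sharp (scaling) line $\frac{4}{q} + \frac{N}{r} = \frac{N}{2}$ used in the definition of $B$ is exactly the endpoint case $\frac1q = \sigma(\frac12 - \frac1r)$; the range restriction on $r$ (namely $r \in [2, \frac{2N}{N-4}]$ for $N \geq 5$, etc.) ensures $q \geq 2$, and the forbidden endpoint $(2,\infty)$ only arises when $N = 4$ (hence the half-open interval $[2,\infty)$ there) or $N \leq 3$ where $r = \infty$ corresponds to $q = 8/N > 2$ so it is allowed. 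Keel--Tao give \eqref{homo-str-est} for all such pairs, and by duality and the Christ--Kiselev lemma (to pass from the full-line operator to the retarded one) they give the inhomogeneous estimate \eqref{inho-str-est-1} for any pair of Biharmonic-admissible exponents on the two sides; taking sup over admissible pairs on the left and inf on the right gives the stated $S(I,L^2)$ and $S'(I,L^2)$ norms. The independence of the constant $C$ from the interval $I$ is built into the argument since the dispersive bound is global in time (for $\mu \geq 0$).

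The main obstacle — such as it is — is purely bookkeeping: one must be careful that the endpoint $r = \frac{2N}{N-4}$ (when $N \geq 5$, corresponding to $q = 2$) is genuinely covered, which requires the full strength of the Keel--Tao endpoint estimate rather than the easier non-endpoint $TT^*$ argument, and one must confirm that the forbidden Keel--Tao endpoint $(q,r,\sigma) = (2,\infty,1)$ never collides with a Biharmonic-admissible pair (it would require $N = 4$ and $r = \infty$, which is excluded by the definition of $B$). Since this is a standard consequence already recorded in \cite{Pausader-DPDE} and \cite{Dinh-DPDE}, I would present it briefly, citing those references for the detailed verification and emphasizing only that $\mu \geq 0$ is what makes the dispersive estimate \eqref{disper-est} global in $t$, hence the Strichartz constants uniform in $I$.
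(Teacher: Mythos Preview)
Your proposal is correct and matches the paper's approach exactly: the paper does not give a standalone proof but simply remarks, immediately before the proposition, that the estimates follow from the dispersive bound \eqref{disper-est} together with the abstract Keel--Tao theory \cite{KT}, citing \cite{Pausader-DPDE,Dinh-DPDE} for the details. Your sketch is a faithful (and more explicit) unpacking of precisely that argument, including the correct identification of the decay exponent $\sigma=N/4$, the role of $\mu\geq 0$ in making the dispersive estimate global, and the exclusion of the forbidden endpoint when $N=4$.
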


	We also have the following Strichartz estimates with a gain of derivatives (see e.g. \cite[Proposition 3.2]{Pausader-DPDE} or \cite{Dinh-DPDE}). 
	\begin{lemma} [Strichartz estimates with a gain of derivatives \cite{Pausader-DPDE,Dinh-DPDE}]
		Let $\mu\geq 0$ and $I \subset \R$ be an interval. Then there exists a constant $C>0$ independent of $I$ such that
		\begin{align} \label{str-est-gain}
		\left\| \Delta \int_0^t e^{-i(t-s)(\Delta^2-\mu\Delta)} F(s)ds \right\|_{L^q(I,L^r)} \leq C \||\nabla|^{2-\frac{2}{m}} F\|_{L^{m'}(I,L^{n'})} 
		\end{align}
		for any $(q,r)\in B$ and any $(m,n) \in S$. In particular, we have for $N\geq 3$,
		\begin{align}
		\left\| \Delta \int_0^t e^{-i(t-s)(\Delta^2-\mu \Delta)} F(s) ds \right\|_{S(I,L^2)} &\leq C\|\nabla F\|_{L^2(I,L^{\frac{2N}{N+2}})}. \label{inho-str-est-2}
		\end{align}
	\end{lemma}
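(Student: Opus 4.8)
The plan is to deduce \eqref{str-est-gain} from a \emph{frequency-localized} refinement of the dispersive estimate that already carries the two-derivative gain, to feed it into the abstract Keel--Tao theorem dyadic block by dyadic block, and then to sum; the particular case \eqref{inho-str-est-2} is then the instance $(m,n)=(2,\tfrac{2N}{N-2})$, for which $2-\tfrac2m=1$.

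First I would reduce. Since $(m,n)\in S$ forces $m\ge 2$, one has $0\le 2-\tfrac2m<2$, so writing $-\Delta=|\nabla|^{2/m}\,|\nabla|^{2-2/m}$ and replacing $F$ by $G:=|\nabla|^{2-2/m}F$ it is enough to prove
\[
\Bigl\||\nabla|^{2/m}\int_0^t e^{-i(t-s)(\Delta^2-\mu\Delta)}G(s)\,ds\Bigr\|_{L^q(I,L^r)}\le C\,\|G\|_{L^{m'}(I,L^{n'})}
\]
for every $(q,r)\in B$, $(m,n)\in S$, with $C$ independent of $I$. Moreover, for $\mu>0$ the substitution $u(t,x)\mapsto u(\mu^{-2}t,\mu^{-1/2}x)$ conjugates the free equation with parameter $\mu$ to the one with parameter $1$, so one may assume $\mu\in\{0,1\}$.

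The heart of the matter is a frequency-localized estimate. Fix a Littlewood--Paley family $\{P_k\}_{k\in\Z}$, $P_k$ localizing to $|\xi|\sim 2^k$. On each dyadic annulus the Hessian of the symbol $|\xi|^4+\mu|\xi|^2$ is nondegenerate, so that after the anisotropic rescaling $\xi=2^k\eta$, $t=2^{-\theta(k)}\tau$ — with $\theta(k)=4k$ when $\mu=0$, and with $\theta(k)=4k$ for $k\ge 0$ resp.\ $\theta(k)=2k$ for $k\le 0$ when $\mu=1$ — the rescaled phase has unit size and Hessian comparable to the identity, uniformly in $k$ (the residual parameter $2^{-2|k|}\in(0,1]$ being harmless). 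Stationary phase then yields the \emph{Schr\"odinger-type} decay $\|e^{-i\tau(\Delta^2-\mu\Delta)}P_0 g\|_{L^\infty}\lesssim\langle\tau\rangle^{-N/2}\|g\|_{L^1}$ for frequency-one data; since $\langle\tau\rangle^{-N/2}\lesssim|\tau|^{-\sigma}$ for every $\sigma\in[0,\tfrac N2]$, the block flow $e^{-i\tau(\Delta^2-\mu\Delta)}P_0$ obeys the Keel--Tao hypotheses with both the biharmonic exponent $\sigma=\tfrac N4$ and the Schr\"odinger exponent $\sigma=\tfrac N2$, hence all inhomogeneous mixed estimates with $(q,r)\in B$ on the left and $(m,n)\in S$ on the right, uniformly in the time interval. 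Running these in the rescaled time $\tau$, undoing the scaling, and using the two admissibility relations $\tfrac4q+\tfrac Nr=\tfrac N2$ and $\tfrac2m+\tfrac Nn=\tfrac N2$ to collect the powers of $2^k$ coming from the $dt$- and $dx$-integrations, one obtains
\[
\Bigl\|\Delta\int_0^t e^{-i(t-s)(\Delta^2-\mu\Delta)}P_k F(s)\,ds\Bigr\|_{L^q(I,L^r)}\le C\,2^{k(2-2/m)}\,\|P_k F\|_{L^{m'}(I,L^{n'})}
\]
with $C$ independent of $k$ and $I$: in the high-frequency regime the collected power is exactly $2^{k(2-2/m)}$, while the $\mu=1$ low-frequency regime (natural time scale $2^{2k}$, not $2^{4k}$) produces a strictly smaller power and causes no trouble. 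Since $2^{k(2-2/m)}\|P_kF\|_{L^{m'}L^{n'}}\sim\||\nabla|^{2-2/m}P_kF\|_{L^{m'}L^{n'}}$, this is the target estimate at the level of a single block, and summing over $k$ — using that the Keel--Tao estimates are Hilbert-space valued, so the block bound persists for $\ell^2(\Z)$-valued data, together with the Littlewood--Paley square-function equivalence in $L^r$ and $L^{n'}$, valid since $1<r,n'<\infty$ in the admissible ranges (in particular for $N\ge 2$) — completes the proof; \eqref{inho-str-est-2} follows by taking $(m,n)=(2,\tfrac{2N}{N-2})$, $(m',n')=(2,\tfrac{2N}{N+2})$.

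The main obstacle I expect is precisely the frequency-localized step: forcing the high-frequency regime (bi-Laplacian dominant, time scale $2^{4k}$) and the $\mu=1$ low-frequency regime (Laplacian dominant, time scale $2^{2k}$) into a single Keel--Tao normalization, and then tracking the powers of $2^k$ from the time rescaling carefully enough that the collected power does not exceed $2^{k(2-2/m)}$. The double-endpoint pair $(q,r)=(2,\tfrac{2N}{N-4})$ for $N\ge 5$, where the Christ--Kiselev reduction of the retarded estimate to the non-retarded one is unavailable and one must invoke the bilinear Keel--Tao argument directly, together with the care needed for the Littlewood--Paley sum at the extreme admissible exponents, is a further but more routine technical point.
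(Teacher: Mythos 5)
The paper offers no proof of this lemma, referring instead to \cite{Pausader-DPDE,Dinh-DPDE}, and your outline reproduces exactly the argument of those references: frequency-localized dispersive estimates with Schr\"odinger-type decay $\langle\tau\rangle^{-N/2}$ on each dyadic block, Keel--Tao applied blockwise so that the mismatch between the relations $\frac4q+\frac Nr=\frac N2$ and $\frac2m+\frac Nn=\frac N2$ yields precisely the factor $2^{k(2-2/m)}$, and Littlewood--Paley summation. Your derivative count is correct (including the observation that the $\mu>0$ low-frequency regime gives a strictly better power for $k\le 0$), so this is essentially the same proof as in the cited source.
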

	
	We also have the following Strichartz estimates for non-admissible pairs.
	\begin{lemma} \label{lem-str-est-non-adm}
		Let $\mu\geq 0$ and $I \subset \R$ be an interval. Let $(q,r)$ be a Biharmonic admissible pair with $r>2$. Fix $k>\frac{q}{2}$ and define $m$ by
		\begin{align} \label{cond-km}
		\frac{1}{k} +\frac{1}{m} =\frac{2}{q}.
		\end{align}
		Then there exists $C=C(N,q,r,k,m)>0$ such that
		\begin{align} \label{str-est-non-adm}
		\left\| \int_0^t e^{-i(t-s)(\Delta^2-\mu \Delta)} F(s) ds \right\|_{L^k(I,L^r)} \leq C\|F\|_{L^{m'}(I,L^{r'})}
		\end{align}
		for any $F \in L^{m'}(I,L^{r'})$.
	\end{lemma}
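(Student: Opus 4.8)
The plan is to obtain \eqref{str-est-non-adm} from the pointwise dispersive bound \eqref{disper-est}, the one-dimensional Hardy--Littlewood--Sobolev inequality in the time variable, and the Christ--Kiselev lemma to restore the time ordering in the Duhamel integral. The key preliminary observation is that, since $(q,r)$ is biharmonic admissible, the identity $\frac4q+\frac Nr=\frac N2$ forces $\frac N4\bigl(1-\frac2r\bigr)=\frac2q$, so that $r>2$ gives $2\le q<\infty$, and I will assume $q>2$ (the remaining value $q=2$, possible only when $N\ge5$ and $r=\frac{2N}{N-4}$, corresponds to the double-endpoint inhomogeneous estimate, which is already contained in Proposition~\ref{prop-str-est}). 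As $\mu\ge0$, \eqref{disper-est} then reads, for all $t\ne s$,
\[
\bigl\|e^{-i(t-s)(\Delta^2-\mu\Delta)}F(s)\bigr\|_{L^r_x}\le C\,|t-s|^{-\frac2q}\,\|F(s)\|_{L^{r'}_x}.
\]

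Extending $F$ by zero outside $I$ and applying Minkowski's integral inequality to the untruncated operator $\mathcal TF(t):=\int_{\R}e^{-i(t-s)(\Delta^2-\mu\Delta)}F(s)\,ds$, the previous bound yields
\[
\|\mathcal TF(t)\|_{L^r_x}\le C\int_{\R}|t-s|^{-\frac2q}\,\|F(s)\|_{L^{r'}_x}\,ds.
\]
Since $0<\frac2q<1$, the one-dimensional Hardy--Littlewood--Sobolev inequality applies to the convolution with the kernel $|t|^{-2/q}$: with $m$ defined by \eqref{cond-km} (and $k<\infty$), the relation $\frac1k+\frac1m=\frac2q$ is precisely $\frac1k=\frac1{m'}-\bigl(1-\frac2q\bigr)$, the hypothesis $k>\frac q2$ is precisely $m<\infty$ (equivalently $m'>1$), and $q>2$ gives $\frac1k+\frac1m<1$, i.e.\ $1<m'<k<\infty$. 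Hence
\[
\|\mathcal TF\|_{L^k(\R,L^r)}\le C\,\|F\|_{L^{m'}(\R,L^{r'})}=C\,\|F\|_{L^{m'}(I,L^{r'})},
\]
with $C=C(N,q,r,k,m)$; the essential point here is that this constant is independent of $I$ because Hardy--Littlewood--Sobolev is used on the whole line rather than Hölder's inequality on $I$, which would produce a constant depending on $|I|$.

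Finally, since $m'<k$, the Christ--Kiselev lemma promotes the bound for $\mathcal T$ to the same bound $L^{m'}(I,L^{r'})\to L^k(I,L^r)$ for the retarded operator $F\mapsto\int_0^te^{-i(t-s)(\Delta^2-\mu\Delta)}F(s)\,ds$, which is exactly \eqref{str-est-non-adm}. The proof is essentially bookkeeping; the only place where something must genuinely be checked is that $q>2$, which simultaneously makes the Hardy--Littlewood--Sobolev kernel $|t|^{-2/q}$ locally integrable and yields the strict inequality $m'<k$ required to apply Christ--Kiselev, and I do not anticipate any serious obstacle beyond this verification.
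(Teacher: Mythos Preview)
Your proof is correct and follows essentially the same approach as the paper: apply the dispersive estimate \eqref{disper-est} to reduce to a one-dimensional convolution with $|t|^{-2/q}$, then invoke Hardy--Littlewood--Sobolev using \eqref{cond-km}. The one difference is that you pass through the untruncated operator $\mathcal{T}$ and then appeal to Christ--Kiselev, whereas the paper applies Minkowski's inequality directly to the retarded integral $\int_0^t$: since both the kernel $|t-s|^{-2/q}$ and $\|F(s)\|_{L^{r'}}$ are nonnegative, the truncated convolution is pointwise dominated by the full convolution, and Hardy--Littlewood--Sobolev applies immediately without any need for Christ--Kiselev. Your careful bookkeeping of the exponent constraints ($q>2$, $1<m'<k<\infty$) is correct and more explicit than the paper's one-line argument.
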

	
	\begin{proof}
	Thanks to \eqref{disper-est}, we have
	\[
	\left\| \int_0^t e^{-i(t-s)(\Delta^2-\mu \Delta)} F(s) ds \right\|_{L^r} \lesssim \int_0^t |t-s|^{-\frac{N}{4}\left(1-\frac{2}{r}\right)} \|F(s)\|_{L^{r'}} ds = \int_0^t |t-s|^{-\frac{2}{q}} \|F(s)\|_{L^{r'}} ds.
	\]
	The result follows from the Hardy-Littlewood-Sobolev inequality and \eqref{cond-km}.
	\end{proof}

\section{Local theory}
\label{S3}
\setcounter{equation}{0}
In this section, we prove the local well-posedness in $H^2$ and the small data theory for \eqref{4NLS}. Let us start with the following nonlinear estimates.
	\begin{lemma} \label{lem-non-est-lwp-1}
		Let $N\geq 1$, $0<\alpha<\alpha^*$ and $I \subset \R$ be an interval. Then there exists $\theta>0$ such that 
		\begin{align}
		\||u|^\alpha u\|_{S'(I,L^2)} \lesssim |I|^\theta \|(1-\Delta) u\|^\alpha_{S(I,L^2)} \|u\|_{S(I,L^2)}. \label{est-lwp-1}
		\end{align}
	\end{lemma}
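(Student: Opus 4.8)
The plan is to deduce \eqref{est-lwp-1} from H\"older's inequality (in space and in time), the Sobolev embedding, and the definitions of the Strichartz norms. Since $\|\cdot\|_{S'(I,L^2)}$ is an infimum over dual Biharmonic admissible pairs, it suffices to produce one pair $(q,r)\in B$, together with auxiliary pairs $(\gamma,\rho),(\tilde q,\sigma)\in B$ and a Lebesgue exponent $p$, for which the argument below closes; the output of the argument is an estimate
\[
\||u|^\alpha u\|_{L^{q'}(I,L^{r'})}\le |I|^{\theta}\,\|(1-\Delta)u\|_{L^\gamma(I,L^\rho)}^{\alpha}\,\|u\|_{L^{\tilde q}(I,L^{\sigma})},
\qquad \theta:=\frac1{q'}-\frac\alpha\gamma-\frac1{\tilde q}>0,
\]
after which bounding the two right-hand factors by $\|(1-\Delta)u\|_{S(I,L^2)}$ and $\|u\|_{S(I,L^2)}$ completes the proof. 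A concrete and instructive choice is $r=\alpha+2$: this lies in the interior of the admissible range of Lebesgue exponents exactly because $0<\alpha<\alpha^*$ (indeed $\alpha+2<\tfrac{2N}{N-4}$ for $N\ge5$, with no restriction for $N\le4$), the associated $q$ then satisfies $q>2$, again precisely because $\alpha<\alpha^*$, so that $(q',r')$ is a legitimate dual pair, and one has the pointwise-in-time identity $\||u|^\alpha u\|_{L^{r'}_x}=\|u\|_{L^{\alpha+2}_x}^{\alpha+1}$.

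First I would treat the spatial norm. Applying H\"older's inequality to the $\alpha+1$ factors of $|u|^\alpha u$ gives $\||u|^\alpha u\|_{L^{r'}_x}\le\|u\|_{L^p_x}^\alpha\,\|u\|_{L^\sigma_x}$ with $\tfrac\alpha p+\tfrac1\sigma=\tfrac1{r'}$, where I would take $\sigma$ to be a Lebesgue exponent occurring in some Biharmonic admissible pair (so that the last factor is directly controlled by $\|u\|_{S(I,L^2)}$) and $p$ so that the Sobolev embedding $W^{2,\rho}(\R^N)\hookrightarrow L^{p}(\R^N)$ is available, i.e. $\tfrac1p\ge\tfrac1\rho-\tfrac2N$, for some admissible $\rho$; for $N\le4$ one may take $\rho=2$ and $p$ any finite exponent, since $H^2$ embeds into every $L^{p}$ with $p<\infty$ (into $L^\infty$ when $N\le3$), while for $N\ge5$ one picks $\rho$ in the admissible range, nearer $\tfrac{2N}{N-4}$ as $\alpha$ grows. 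Since $\|u\|_{W^{2,\rho}_x}\approx\|(1-\Delta)u\|_{L^\rho_x}$ for $1<\rho<\infty$, this bounds the $\alpha$ ``high'' factors by $\|(1-\Delta)u\|_{L^\rho_x}$; integrating in time by H\"older's inequality on the bounded interval $I$ then produces the displayed estimate with $\theta=\tfrac1{q'}-\tfrac\alpha\gamma-\tfrac1{\tilde q}$.

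The only real work, and the step I expect to be the main obstacle, is the simultaneous calibration of the exponents so that everything fits: the admissibility identities $\tfrac4q+\tfrac Nr=\tfrac4\gamma+\tfrac N\rho=\tfrac4{\tilde q}+\tfrac N\sigma=\tfrac N2$, the range restrictions on $r,\rho,\sigma$, the H\"older relation $\tfrac\alpha p+\tfrac1\sigma=\tfrac1{r'}$, the Sobolev relation $\tfrac1p\ge\tfrac1\rho-\tfrac2N$ (with $p<\infty$), and above all the strict inequality $\theta>0$. This is exactly where the hypothesis $\alpha<\alpha^*$ is used: it is the precise threshold making $|u|^\alpha u$ energy-subcritical, so that on a bounded time interval the nonlinearity has room to spare and one gains a strictly positive power of $|I|$. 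For $N\le4$, where $\alpha^*=\infty$ and $H^2$ embeds into all $L^{p}$ with $p<\infty$, this calibration is immediate; for $N\ge5$ one uses $\alpha<\tfrac8{N-4}$ to fit the $\alpha$ high-regularity copies of $u$ into the bounded range of admissible Lebesgue exponents, the verification then being a short computation that one may, if convenient, split according to whether $\alpha$ is near $\alpha^*$. Since the statement only requires the existence of some $\theta>0$, the slack available in the choices of $(\gamma,\rho)$, of $\sigma$, and of the H\"older splitting is more than enough.
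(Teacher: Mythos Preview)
Your approach is correct and is essentially the paper's: pick a Biharmonic admissible pair $(q,r)$, split $|u|^\alpha u$ by H\"older in space into $\alpha$ high-regularity copies and one low copy, control the high copies via Sobolev embedding $W^{2,\rho}\hookrightarrow L^p$ with $(\gamma,\rho)\in B$, and H\"older in time for the $|I|^\theta$ gain. The paper executes this with explicit exponents (for $N\ge5$ it takes $r=\tfrac{N(\alpha+2)}{N+2\alpha}$, uses $\dot W^{2,r}\hookrightarrow L^{\frac{N(\alpha+2)}{N-4}}$, and gets $\theta=1-\tfrac{(N-4)\alpha}{8}$; for $N\le4$ it takes $r=2(\alpha+1)$ and places the low factor in $L^\infty_tL^2_x$), while your choice $r=\alpha+2$ is different but also works---for instance, taking $\sigma=p=\alpha+2$, $\rho=2$ (so $\gamma=\infty$), $\tilde q=q$ yields $\theta=1-\tfrac{N\alpha}{4(\alpha+2)}$, which is positive exactly when $(N-4)\alpha<8$, i.e.\ $\alpha<\alpha^*$.

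The only shortcoming is that you describe the exponent calibration as a ``short computation'' and assert that ``the slack\dots is more than enough'' without actually producing a single set of exponents and checking $\theta>0$; since that verification \emph{is} the lemma, you should write it out (the choice above does it in one line for all $N$).
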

	
	\begin{proof}
		We consider separately two cases: $N\geq 5$ and $N\leq 4$. 
		
		$\bullet$ When $N\geq 5$, we introduce
		\[
		(q,r):= \left( \frac{8(\alpha+2)}{(N-4)\alpha}, \frac{N(\alpha+2)}{N+2\alpha} \right), \quad (a,b):= \left( \frac{4\alpha(\alpha+2)}{8-(N-8)\alpha}, \frac{N(\alpha+2)}{N-4} \right).
		\]
		We readily check that $(q,r) \in B$,
		\[
		\frac{1}{q'}=\frac{\alpha}{a}+\frac{1}{q}, \quad \frac{1}{r'}=\frac{\alpha}{b}+\frac{1}{r}, \quad \frac{\alpha}{a} -\frac{\alpha}{q}=1-\frac{(N-4)\alpha}{8}
		\]
		and $\dot{W}^{2,r} \subset L^b$. By H\"older's inequality and Sobolev embedding, we have
		\begin{align*}
		\||u|^\alpha u\|_{S'(I,L^2)} \leq \||u|^\alpha u\|_{L^{q'}(I, L^{r'})} &\leq \|u\|^\alpha_{L^a(I,L^b)} \|u\|_{L^q(I,L^r)} \\
		&\lesssim |I|^{1-\frac{(N-4)\alpha}{8}} \|\Delta u\|^\alpha_{L^q(I,L^r)} \|u\|_{L^q(I,L^r)}.
		\end{align*}
		
		$\bullet$ When $N\leq 4$, we take the advantage of the Sobolev embedding $H^2 \subset L^r$ for all $r\in [2,\infty)$. We introduce
		\[
		(q,r)= \left(\frac{8(\alpha+1)}{N\alpha}, 2(\alpha+1)\right), \quad (a,b)=\left(\frac{8\alpha(\alpha+1)}{8-(N-8)\alpha}, 2(\alpha+1)\right), \quad (m,n)=(\infty,2).
		\]
		We see that $(q,r), (m,n) \in B$,
		\[
		\frac{1}{q'}=\frac{\alpha}{a}+\frac{1}{m}, \quad \frac{1}{r'} =\frac{\alpha}{b}+\frac{1}{n}
		\]
		and $H^2 \subset L^b$. By H\"older's inequality and Sobolev embedding, we have
		\begin{align}
		\||u|^\alpha u\|_{S'(I,L^2)} \leq \||u|^\alpha u\|_{L^{q'}(I, L^{r'})} &\leq \|u\|^\alpha_{L^a(I,L^b)} \|u\|_{L^m(I,L^n)} \nonumber\\
		&\lesssim |I|^{1-\frac{N\alpha}{8(\alpha+1)}} \|(1-\Delta) u\|^\alpha_{L^m(I,L^n)} \|u\|_{L^m(I,L^n)}. \label{est-ab-qr}
		\end{align}
		Collecting the above two cases, we get \eqref{est-lwp-1}.
	\end{proof}
	
	\begin{lemma} \label{lem-non-est-lwp-2}
		Let $N\geq 3$, $\frac{2}{N} \leq \alpha <\alpha^*$ and $I \subset \R$ be an interval.  Then there exists $\theta>0$ such that
		\begin{align}
		\|\nabla (|u|^\alpha u)\|_{L^2(I,L^{\frac{2N}{N+2}})} \lesssim |I|^\theta \|(1-\Delta) u\|^{\alpha+1}_{S(I,L^2)}. \label{est-lwp-2}
		\end{align}
	\end{lemma}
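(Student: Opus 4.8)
The plan is to mimic the proof of Lemma~\ref{lem-non-est-lwp-1}, the only genuinely new ingredient being the elementary pointwise bound $|\nabla(|u|^\alpha u)| \lesssim |u|^\alpha |\nabla u|$, valid since $z\mapsto |z|^\alpha z$ is $C^1$ for $\alpha>0$. This reduces matters to estimating $\||u|^\alpha \nabla u\|_{L^2(I,L^{2N/(N+2)})}$. First I would apply Hölder's inequality in the space variable,
\[
\big\||u|^\alpha \nabla u\big\|_{L^{\frac{2N}{N+2}}_x} \le \|u\|_{L^b_x}^\alpha \,\|\nabla u\|_{L^c_x}, \qquad \frac{N+2}{2N}=\frac{\alpha}{b}+\frac1c,
\]
and then Hölder in time on the interval $I$,
\[
\big\||u|^\alpha \nabla u\big\|_{L^2_t L^{\frac{2N}{N+2}}_x} \le |I|^{\theta}\,\|u\|_{L^a_t L^b_x}^\alpha \,\|\nabla u\|_{L^q_t L^c_x}, \qquad \frac{\alpha}{a}+\frac1q+\theta=\frac12,\quad \theta\ge 0.
\]

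Next I would choose the exponents so that Sobolev embedding turns both factors into $(1-\Delta)u$ measured in biharmonic-admissible norms. For $N\ge 5$ this means using $\dot W^{2,r}\hookrightarrow L^b$ (so $\frac1b=\frac1r-\frac2N$) and $\dot W^{2,\rho}\hookrightarrow \dot W^{1,c}$ (so $\frac1c=\frac1\rho-\frac1N$) with $(a,r),(q,\rho)\in B$; for $N=3,4$ one instead exploits $H^2\hookrightarrow L^b$ for all finite $b$, which leaves considerably more freedom. Once the exponents are arranged this way, $\|u\|_{L^a_t L^b_x}\lesssim \|(1-\Delta)u\|_{L^a_t L^r_x}\le \|(1-\Delta)u\|_{S(I,L^2)}$ and likewise $\|\nabla u\|_{L^q_t L^c_x}\lesssim \|(1-\Delta)u\|_{S(I,L^2)}$, and multiplying out gives \eqref{est-lwp-2}.

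The crux, and the step I expect to be the main obstacle, is the exponent bookkeeping: one must verify that the two Hölder relations above can be solved simultaneously with $(a,r),(q,\rho)\in B$, with $b$ and $c$ in the ranges where the Sobolev embeddings actually hold, and with $\theta>0$. A scaling count shows that the fully scale-invariant form of the estimate (with $\Delta$ in place of $1-\Delta$ on the right) holds precisely at $\alpha=\alpha^*$; hence for $\alpha<\alpha^*$ there is genuine slack that can be spent to obtain $\theta>0$, and the lower bound $\alpha\ge\frac2N$ together with $N\ge 3$ is exactly what keeps the pairs $(a,r)$, $(q,\rho)$ inside the admissible range and keeps $b,c\ge 2$. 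As in Lemma~\ref{lem-non-est-lwp-1}, the cleanest way to carry this out is to treat $N\ge 5$ and $N\le 4$ separately and to exhibit explicit admissible choices of $(a,b)$, $(q,r)$ (and the induced $\rho$, $c$) in each case; with those in hand the inequality is immediate from Hölder and Sobolev.
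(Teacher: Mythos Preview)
Your approach is the same as the paper's---H\"older in space, H\"older in time, then Sobolev to land on biharmonic-admissible norms---but the exponent check you defer is not merely bookkeeping: your scheme for $N\ge 5$ has a genuine gap. With the homogeneous embeddings $\dot W^{2,r}\hookrightarrow L^b$ and $\dot W^{2,\rho}\hookrightarrow \dot W^{1,c}$ and admissible $r,\rho\in[2,\tfrac{2N}{N-4}]$, one has $\tfrac{1}{b}\le\tfrac{N-4}{2N}$ and $\tfrac{1}{c}\le\tfrac{N-2}{2N}$, so
\[
\frac{\alpha}{b}+\frac{1}{c}\le \frac{\alpha(N-4)+N-2}{2N},
\]
and this reaches the required value $\tfrac{N+2}{2N}$ only when $\alpha\ge\tfrac{4}{N-4}$. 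For $\tfrac{2}{N}\le\alpha<\tfrac{4}{N-4}$ your plan cannot close, regardless of how you split the two admissible pairs.

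The paper handles this by a second subcase for $N\ge 5$: it takes the trivial pair $(\infty,2)$ and uses the \emph{inhomogeneous} embeddings $H^2\hookrightarrow L^b$ for $b\in[2,\tfrac{2N}{N-4}]$ and $H^1\hookrightarrow L^n$ for $n\in[2,\tfrac{2N}{N-2}]$, choosing $b,n$ so that $\tfrac{\alpha}{b}+\tfrac{1}{n}=\tfrac{N+2}{2N}$; this is solvable precisely for $\tfrac{2}{N}\le\alpha\le\tfrac{4}{N-4}$, and the time factor $|I|^{1/2}$ falls out directly. Note that the hypothesis $\alpha\ge\tfrac{2}{N}$ enters exactly here (to keep $b,n\ge 2$), not in the homogeneous subcase, so your final sentence misidentifies where that lower bound bites. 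For $3\le N\le 4$ your description matches the paper's argument.
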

	
	\begin{proof}
		We consider two cases: $N\geq 5$ and $3\leq N\leq 4$.
		
		$\bullet$ When $N\geq 5$, we consider two subcases. If $\frac{4}{N-4}<\alpha<\frac{8}{N-4}$, we introduce
		\[
		a=2(\alpha+1), \quad b=\frac{2N(\alpha+1)}{(N-4)\alpha+N}, \quad n=\frac{2N(\alpha+1)}{(N-2)\alpha+N+2}, \quad q=\frac{8(\alpha+1)}{(N-4)\alpha-4}, \quad r=\frac{2N(\alpha+1)}{N+4\alpha+4}.
		\]
		It is easy to check that $(q,r)\in B$ and 
		\[
		\frac{1}{2}=\frac{\alpha+1}{a}, \quad \frac{N+2}{2N}=\frac{\alpha}{b}+\frac{1}{n}, \quad \frac{1}{b}=\frac{1}{r}-\frac{2}{N},\quad  \frac{1}{n}=\frac{1}{r}-\frac{1}{N}.
		\]
		By H\"older's inequality, we see that
		\begin{align*}
		\|\nabla(|u|^\alpha u)\|_{L^2(I,L^{\frac{2N}{N+2}})} &\leq \|u\|^\alpha_{L^a(I,L^b)} \|\nabla u\|_{L^a(I,L^n)} \\
		&\lesssim \|\Delta u\|^{\alpha+1}_{L^a(I,L^r)} \\
		&\lesssim |I|^{1-\frac{(N-4)\alpha}{8}} \|\Delta u\|^{\alpha+1}_{L^q(I,L^r)}.
		\end{align*}
		
		If $\frac{2}{N} \leq \alpha \leq \frac{4}{N-4}$, we estimate 
		\begin{align*}
		\|\nabla(|u|^\alpha u)\|_{L^2(I,L^{\frac{2N}{N+2}})} &\leq \|u\|^\alpha_{L^{2\alpha}(I,L^b)} \|\nabla u\|_{L^\infty(I,L^n)} \\
		&\lesssim \|(1-\Delta) u\|^\alpha_{L^{2\alpha}(I,L^2)} \|(1-\Delta) u\|_{L^\infty(I,L^2)} \\
		&\lesssim |I|^{\frac{1}{2}} \|(1-\Delta) u\|^{\alpha+1}_{L^\infty(I,L^2)},
		\end{align*}
		where $b$ and $n$ are chosen so that 
		\begin{align} \label{defi-b-n}
		\frac{N+2}{2N} = \frac{\alpha}{b} +\frac{1}{n}
		\end{align}
		the embeddings $H^2 \subset L^b$ and $H^1 \subset L^n$ hold. The latter condition implies that $b \in \left[2,\frac{2N}{N-4}\right]$ and $n \in \left[2,\frac{2N}{N-2}\right]$. This shows that 
		\[
		\frac{\alpha(N-4)}{2N} + \frac{N-2}{2N} \leq \frac{\alpha}{b} +\frac{1}{n} \leq \frac{\alpha+1}{2}.
		\]
		Since $\frac{2}{N} \leq \alpha \leq \frac{4}{N-4}$, we can choose $b \in \left[2,\frac{2N}{N-4}\right]$ and $n \in \left[2,\frac{2N}{N-2}\right]$ so that \eqref{defi-b-n} is satisfied.
		
		$\bullet$ In the case $3\leq N\leq 4$, we make use of the Sobolev embedding $H^2 \subset L^r$ for all $r\in [2,\infty)$. As $N\alpha\geq 2$, we have
		\begin{align*}
		\|\nabla(|u|^\alpha u)\|_{L^2(I,L^{\frac{2N}{N+2}})} &\leq \|u\|^\alpha_{L^{2\alpha}(I,L^{N\alpha})} \|\nabla u\|_{L^\infty(I,L^2)} \\
		&\lesssim \|(1-\Delta) u\|^\alpha_{L^{2\alpha}(I,L^2)} \|(1-\Delta)u\|_{L^\infty(I,L^2)} \\
		&\lesssim |I|^{\frac{1}{2}} \|(1-\Delta) u\|^{\alpha+1}_{L^\infty(I,L^2)}.
		\end{align*}
		Collecting the above cases, we end the proof.			
	\end{proof}

	\begin{lemma} [Local well-posedness] \label{lem-lwp}
		Let $N\geq 1$, $\mu \geq 0$ and $\alpha$ satisfy \eqref{cond-alpha}
		Let $u_0 \in H^2$. Then there exist $T_*, T^* \in (0,\infty]$ and a unique solution to \eqref{4NLS} satisfying
		\[
		u \in C((-T_*, T^*), H^2) \cap L^q_{\loc}((-T_*,T^*), W^{2,r})
		\]
		for all $(q,r) \in B$. Moreover, for any compact interval $I \Subset (-T_*,T^*)$ and any $(q,r) \in B$ with $q\ne \infty$,
		\begin{align} \label{loc-est-I}
		\|(1-\Delta)u\|_{L^q(I,L^r)} \lesssim \scal{I}^{\frac{1}{q}}.
		\end{align}
	\end{lemma}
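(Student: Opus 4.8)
The plan is a standard contraction-mapping argument in Strichartz spaces, with Lemmas~\ref{lem-non-est-lwp-1} and~\ref{lem-non-est-lwp-2} supplying the nonlinear estimates. Writing a solution via Duhamel's formula,
\[
u(t) = e^{-it(\Delta^2-\mu\Delta)}u_0 \mp i\int_0^t e^{-i(t-s)(\Delta^2-\mu\Delta)}|u(s)|^\alpha u(s)\,ds =: \Phi(u)(t),
\]
I would run the fixed point on a short symmetric interval $I=[-T,T]$ in the set
\[
B_{T,R}:=\big\{u\in C(I,H^2): \|(1-\Delta)u\|_{S(I,L^2)}\le R\big\}, \qquad d(u,v):=\|u-v\|_{S(I,L^2)},
\]
for suitable $R,T>0$. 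Completeness of $(B_{T,R},d)$ is routine: the ball $\{\|(1-\Delta)u\|_{S(I,L^2)}\le R\}$ is closed in $S(I,L^2)$ by a weak-compactness/Fatou argument.

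For the self-mapping property, apply the homogeneous and inhomogeneous Strichartz estimates \eqref{homo-str-est}, \eqref{inho-str-est-1} to $\Phi(u)$ and, for the top-order part, the gain-of-derivatives estimates \eqref{str-est-gain}--\eqref{inho-str-est-2} so that only one derivative hits the nonlinearity; then \eqref{est-lwp-1} and \eqref{est-lwp-2} give, for $N\ge3$,
\[
\|(1-\Delta)\Phi(u)\|_{S(I,L^2)}\le C\|u_0\|_{H^2} + C|I|^\theta\|(1-\Delta)u\|_{S(I,L^2)}^{\alpha+1}.
\]
For $N=1,2$ the same bound follows by estimating $|u|^\alpha u$ directly in $L^1(I,H^2)$ via $H^2(\R^N)\hookrightarrow L^\infty$ — which is where the requirement $\alpha\ge1$ in \eqref{cond-alpha} enters, since two derivatives must be placed on $|u|^\alpha u$. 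Taking $R:=2C\|u_0\|_{H^2}$ and then $T$ small enough that $CT^\theta R^\alpha\le\tfrac12$ makes $\Phi$ map $B_{T,R}$ into itself. For the contraction one must measure the difference only in the weaker metric $d$, i.e.\ without derivatives, because when $\alpha<1$ the map $z\mapsto|z|^\alpha z$ is merely Hölder and $\nabla(|u|^\alpha u-|v|^\alpha v)$ has no usable bound; from the pointwise inequality $\big||z_1|^\alpha z_1-|z_2|^\alpha z_2\big|\lesssim(|z_1|^\alpha+|z_2|^\alpha)|z_1-z_2|$ and the same Hölder--Sobolev bookkeeping as in Lemma~\ref{lem-non-est-lwp-1},
\[
d(\Phi(u),\Phi(v))\le C|I|^\theta\big(\|(1-\Delta)u\|_{S(I,L^2)}^\alpha+\|(1-\Delta)v\|_{S(I,L^2)}^\alpha\big)\,d(u,v)\le\tfrac12\,d(u,v)
\]
after shrinking $T$ if needed. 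Banach's fixed point theorem yields a unique $u\in B_{T,R}$ solving \eqref{4NLS}, and $u\in C(I,H^2)$ since the Duhamel term is continuous into $H^2$ by the same Strichartz bounds.

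Unconditional uniqueness in the class $C((-T_*,T^*),H^2)\cap L^q_{\loc}((-T_*,T^*),W^{2,r})$ follows from a continuity argument: partition an arbitrary common interval of existence into pieces short enough for the contraction estimate to force the two solutions to agree, and iterate. Gluing the local solutions produces the maximal interval $(-T_*,T^*)$, and, since the local existence time depends only on $\|u_0\|_{H^2}$, the standard blow-up alternative holds at a finite endpoint. For \eqref{loc-est-I}, given a compact $I\Subset(-T_*,T^*)$ put $M:=\sup_{t\in I}\|u(t)\|_{H^2}<\infty$; partition $I$ into $\lesssim\scal{I}$ subintervals of length comparable to the local existence time $\tau_0\sim(1+M^\alpha)^{-1/\theta}$, so that on each subinterval $\|(1-\Delta)u\|_{S(\cdot,L^2)}\lesssim M$, and sum the $q$-th powers over the subintervals to get $\|(1-\Delta)u\|_{L^q(I,L^r)}^q\lesssim\scal{I}\,M^q$, which is \eqref{loc-est-I}.

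\textbf{Main obstacle.} The only genuinely delicate point is the limited regularity of $z\mapsto|z|^\alpha z$ when $\alpha$ is small: this forces the contraction to be run in $\|\cdot\|_{S(I,L^2)}$ rather than in $\|(1-\Delta)\cdot\|_{S(I,L^2)}$, and forces the top-order part of the Duhamel term to be handled via the gain-of-derivatives Strichartz estimate so that at most one derivative falls on $|u|^\alpha u$ — which is exactly what pins down the admissibility constraints in \eqref{cond-alpha}. Once Lemmas~\ref{lem-non-est-lwp-1} and~\ref{lem-non-est-lwp-2} are granted, the remainder is a routine exercise.
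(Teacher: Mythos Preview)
Your proposal is correct and follows essentially the same route as the paper: a contraction on a ball in $\|(1-\Delta)\cdot\|_{S(I,L^2)}$ measured with the weaker distance $\|\cdot\|_{S(I,L^2)}$, with the top-order Duhamel piece handled via the gain-of-derivatives Strichartz estimate and Lemmas~\ref{lem-non-est-lwp-1}--\ref{lem-non-est-lwp-2}, and \eqref{loc-est-I} obtained by subdividing $I$. The only cosmetic difference is that for $N\le 2$ the paper places $\Delta(|u|^\alpha u)$ in the dual pair $L^{\qbo'}(I,L^{\rbo'})$ of Lemma~\ref{lem-non-est-lwp-1} and invokes Kato's product estimate \eqref{kato-est}, whereas you put it in $L^1(I,H^2)$ via $H^2\hookrightarrow L^\infty$; both require $\alpha\ge 1$ and yield the same contraction.
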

	
	\begin{remark} \label{rem-lwp}
		The local well-posedness of $H^2$-solutions for \eqref{4NLS} was stated in \cite[Proposition 4.1]{Pausader-DPDE} for $0<\alpha<\alpha^*$ without proof. The author in \cite{Pausader-DPDE} refered to \cite{Cazenave} for a similar proof. However, due to a higher-order (Biharmonic) operator, we need the nonlinearity to have at least second derivatives to apply the method in \cite{Cazenave} (see also \cite{Dinh-BBMS}). This requires $\alpha\geq 1$ (hence $N\leq 12$) to get a similar result as for the classical NLS. To improve this restriction, we use Strichartz estimates with a gain of derivatives \eqref{inho-str-est-2}. This leads to the restriction \eqref{cond-alpha} (see Lemma $\ref{lem-non-est-lwp-2}$). Note that this restriction is sharp for the local well-posedness in $H^2$. In fact, to lower the requirement of regularity for the nonlinearity, we need to use \eqref{inho-str-est-2} which is the best estimate with a highest gain of derivatives in dimensions $N\geq 3$ (see \eqref{str-est-gain}). By H\"older's inequality, we estimate
		\begin{align} \label{sharp-lwp}
		\|\nabla (|u|^\alpha u)\|_{L^2(I, L^{\frac{2N}{N+2}})} \leq \|u\|^\alpha_{L^a(I,L^b)} \|\nabla u\|_{L^c(I,L^d)}
		\end{align}
		for some $a,b,c,d \in [1,\infty]$ satisfying
		\[
		\frac{1}{2}=\frac{\alpha}{a}+ \frac{1}{c}, \quad \frac{N+2}{2N} = \frac{\alpha}{b} +\frac{1}{d}.
		\]
		To bound the right hand side of \eqref{sharp-lwp} by $|I|^\theta \|(1-\Delta) u\|_{S(I,L^2)}^{\alpha+1}$ for some $\theta>0$, a necessary condition is $b, d\geq 2$ which leads to $\alpha\geq \frac{2}{N}$. In dimensions $N=1,2$, we use \eqref{str-est-gain} and estimate
		\[
		\||\nabla|^{2-\frac{2}{m}} (|u|^\alpha u)\|_{L^{m'}(I,L^{n'})} \leq \|u\|^\alpha_{L^a(I,L^b)} \||\nabla|^{2-\frac{2}{m}} u\|_{L^c(I,L^d)}.
		\]
		We see that in dimension $N=2$, the best estimate with gain of derivatives is $m=2+, n=\infty-$, hence $n'=1+$. Since $b,d\geq 2$, we need
		\[
		\alpha \geq \frac{2}{1+} -1>1.
		\]
		Here for a finite number $a$, the notation $a+$ stands for a number $a+\vareps$ with $0<\vareps \ll 1$. Similarly, $\infty- =\frac{1}{\vareps}$ with $0<\vareps \ll 1$. In dimension $N=1$, we take $m=4, n=\infty$, hence $n'=1$ and $\alpha \geq 1$. This shows that we cannot make the nonlinear exponent strictly smaller than 1 in dimensions $N=1,2$.
	\end{remark}
	
	\noindent {\it Proof of Lemma $\ref{lem-lwp}$.}
	Consider
	\[
	X:= \left\{ C(I, H^2) \cap \bigcap_{(q,r) \in B} L^q(I, W^{2,r}) \ : \ \|(1-\Delta) u\|_{S(I,L^2)} \leq M \right\}
	\]
	equipped with the distance
	\[
	d(u,v) := \|u-v\|_{S(I,L^2)},
	\]
	where $I=[-T,T]$ with $M, T>0$ to be chosen later. We will show that the functional
	\[
	\Phi_{u_0}(u(t)) := e^{-it(\Delta^2-\mu\Delta)} u_0 + i\int_0^t e^{-i(t-s)(\Delta^2-\mu\Delta)} |u(s)|^\alpha u(s) ds
	\]
	is a contraction on $(X,d)$. We will consider separately two cases: $N\geq 3$ and $1\leq N\leq 2$.
	
	In the case $N\geq 3$, by Strichartz estimates, Lemma $\ref{lem-non-est-lwp-1}$ and Lemma $\ref{lem-non-est-lwp-2}$, there exists $\theta_1, \theta_2 >0$ such that
	\begin{align}
	\|(1-\Delta) \Phi_{u_0}(u)\|_{S(I,L^2)} &\sim \|\Phi_{u_0}(u)\|_{S(I,L^2)} + \|\Delta \Phi_{u_0}(u)\|_{S(I,L^2)}  \nonumber \\
	&\lesssim \|u_0\|_{L^2} + \||u|^\alpha u\|_{S'(L^2,I)} + \|\Delta u_0\|_{L^2} + \|\nabla (|u|^\alpha u) \|_{L^2(I,L^{\frac{2N}{N+2}})} \nonumber \\
	&\lesssim \|u_0\|_{H^2} + \||u|^\alpha u\|_{S'(L^2,I)} + \|\nabla (|u|^\alpha u) \|_{L^2(I,L^{\frac{2N}{N+2}})} \nonumber \\
	&\lesssim \|u_0\|_{H^2} + |I|^{\theta_1} \|(1-\Delta) u\|^\alpha_{S(I,L^2)} \|u\|_{S(I,L^2)} + |I|^{\theta_2} \|(1-\Delta) u\|^{\alpha+1}_{S(I,L^2)} \nonumber \\
	&\lesssim  \|u_0\|_{H^2} + \left(|I|^{\theta_1} + |I|^{\theta_2}\right) \|(1-\Delta) u\|^{\alpha+1}_{S(I,L^2)}. \label{cont-est-I}
	\end{align}
	
	In the case $1\leq N\leq 2$, we use Lemma $\ref{lem-non-est-lwp-1}$ and \eqref{est-ab-qr} to have
	\begin{align}
	\|(1-\Delta) \Phi_{u_0}(u)\|_{S(I,L^2)} &\lesssim \|u_0\|_{H^2} + \||u|^\alpha u\|_{S'(L^2,I)} + \|\Delta (|u|^\alpha u) \|_{L^{q'}(I,L^{r'})} \nonumber \\
	&\lesssim \|u_0\|_{H^2} + \||u|^\alpha u\|_{S'(L^2,I)} + \|u\|^\alpha_{L^a(I,L^b)} \|\Delta u\|_{L^m(I,L^n)} \nonumber \\
	&\lesssim \|u_0\|_{H^2} + |I|^{\theta_1} \|(1-\Delta) u\|^\alpha_{S(I,L^2)} \|u\|_{S(I,L^2)} + |I|^{\theta_2} \|(1-\Delta) u\|^{\alpha+1}_{S(I,L^2)} \nonumber \\
	&\lesssim  \|u_0\|_{H^2} + \left(|I|^{\theta_1} + |I|^{\theta_2}\right) \|(1-\Delta) u\|^{\alpha+1}_{S(I,L^2)}. \label{cont-est-II}
	\end{align}
	Here we have used the high-order derivative estimate due to \cite[Lemma A.3]{Kato}: if $\alpha \geq 1$, then for $1<q, q_2 <\infty$ and $1<q_1\leq \infty$ satisfying $\frac{1}{q}=\frac{\alpha}{q_1} + \frac{1}{q_2}$,
	\begin{align} \label{kato-est}
	\|\Delta (|u|^\alpha u)\|_{L^q} \lesssim \|u\|^\alpha_{L^{q_1}} \|\Delta u\|_{L^{q_2}}.
	\end{align}
	Moroever, 
	\begin{align*}
	d(\Phi_{u_0}(u), \Phi_{u_0}(v)) &\lesssim \||u|^\alpha u - |v|^\alpha v\|_{S'(L^2,I)} \\
	&\lesssim |I|^{\theta_1} \left(\|(1-\Delta) u\|^\alpha_{S(I,L^2)} + \|(1-\Delta) v\|^\alpha_{S(I,L^2)} \right) \|u-v\|_{S(I,L^2)}.
	\end{align*}
	This shows that there exists $C>0$ independent of $u_0$ and $T$ such that for any $u, v\in X$,
	\begin{align*}
	\|(1-\Delta) \Phi_{u_0}(u)\|_{S(I,L^2)} &\leq C \|u_0\|_{H^2} + C(T^{\theta_1} + T^{\theta_2}) M^{\alpha+1}, \\
	d(\Phi_{u_0}(u), \Phi_{u_0}(v)) &\leq C T^{\theta_1} M^\alpha d(u,v).
	\end{align*}
	By taking $M=2C\|u_0\|_{H^2}$ and choosing $T>0$ small enough such that
	\[
	C\left(T^{\theta_1}+T^{\theta_2}\right)M^\alpha \leq \frac{1}{2},
	\]
	we see that $\Phi_{u_0}$ is a contraction on $(X,d)$. This shows the existence and uniqueness of solution to \eqref{4NLS}. The estimate \eqref{loc-est-I} follows from \eqref{cont-est-I} and \eqref{cont-est-II} by dividing $I$ into a finite number of small intervals and applying the continuity argument. The proof is now complete.
	\hfill $\Box$

	Let us now introduce some exponents
	\begin{align} \label{defi-qrkm}
	\begin{aligned}
	\qbo &:=\frac{8(\alpha+2)}{N\alpha}, & \rbo &:= \alpha+2, & \kbo&:=\frac{4\alpha(\alpha+2)}{8-(N-4)\alpha}, \\ 
	\mbo &:=\frac{4\alpha(\alpha+2)}{N\alpha^2+(N-4)\alpha-8}, & \abo &:=\frac{4(\alpha+2)}{(N-2)\alpha-4}, & \bbo&:=\frac{2N(\alpha+2)}{2(N+4)-(N-4)\alpha}.
	\end{aligned}
	\end{align}
	\begin{remark} \label{rem-qrkm}
	A straightforward computation shows that if $\frac{8}{N}<\alpha<\alpha^*$, then $(\qbo,\rbo)$ is a Biharmonic admissible pair. Moreover, the estimate \eqref{str-est-non-adm} holds for this choice of exponents since $\kbo, \mbo$ and $\qbo$ satisfy \eqref{cond-km}. 
	\end{remark}

	We also have the following nonlinear estimates which follow directly from H\"older's inequality, Sobolev embeddings, and \eqref{kato-est}.
	\begin{lemma} \label{lem-non-est}
		Let $N\geq 3$, $\frac{8}{N}<\alpha<\alpha^*$ and $I\subset \R$ be an interval. Then we have that
		\begin{align*}
		\||u|^\alpha u\|_{L^{\mbo'}(I,L^{\rbo'})} &\lesssim \|u\|^{\alpha+1}_{L^{\kbo}(I,L^{\rbo})}, \\
		\|\nabla(|u|^\alpha u)\|_{L^2(I,L^{\frac{2N}{N+2}})} &\lesssim \|u\|^\alpha_{L^{\kbo}(I,L^{\rbo})} \|\Delta u\|_{L^{\abo}(I,L^{\bbo})}.
		\end{align*}
		Moreover, if $\alpha \geq 1$, then 
		\begin{align*}
		\|\Delta(|u|^\alpha u)\|_{L^{\qbo'}(I,L^{\rbo'})} \lesssim \|u\|^\alpha_{L^{\kbo}(I, L^{\rbo})} \|\Delta u\|_{L^{\qbo}(I,L^{\rbo})}.
		\end{align*}
	\end{lemma}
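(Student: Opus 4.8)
\emph{Overall strategy and the first estimate.} Each of the three bounds has the same shape: one estimates $|u|^\alpha u$ or a derivative of it pointwise, distributes the resulting product by H\"older's inequality first in $x$ and then in $t$, and, where a derivative appears, uses a Sobolev embedding to return to the building-block norm $\|u\|_{L^{\kbo}(I,L^{\rbo})}$. The exponents in \eqref{defi-qrkm} are arranged so that every H\"older and Sobolev relation balances, so the proof is essentially the verification of the numerology; each identity below is a one- or two-line computation from \eqref{defi-qrkm} together with the relation $\tfrac{1}{\kbo}+\tfrac{1}{\mbo}=\tfrac{2}{\qbo}$ recorded in Remark~\ref{rem-qrkm}. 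For the first estimate, $||u|^\alpha u|=|u|^{\alpha+1}$ gives $\||u|^\alpha u\|_{L^{\rbo'}}=\|u\|_{L^{(\alpha+1)\rbo'}}^{\alpha+1}=\|u\|_{L^{\rbo}}^{\alpha+1}$ by $(\alpha+1)\rbo'=\rbo$ (equivalently $\tfrac{\alpha+1}{\rbo}=1-\tfrac1\rbo$); integrating in $t$ with H\"older and $(\alpha+1)\mbo'=\kbo$ yields $\|u\|_{L^{\kbo}(I,L^{\rbo})}^{\alpha+1}$.

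\emph{The second estimate.} Using $|\nabla(|u|^\alpha u)|\lesssim|u|^\alpha|\nabla u|$ and H\"older in $x$, one has $\|\nabla(|u|^\alpha u)\|_{L^{\frac{2N}{N+2}}}\lesssim\|u\|_{L^{\rbo}}^{\alpha}\|\nabla u\|_{L^{p}}$ with $\tfrac{\alpha}{\rbo}+\tfrac{1}{p}=\tfrac{N+2}{2N}$; the Sobolev embedding $\dot W^{2,\bbo}\hookrightarrow\dot W^{1,p}$, i.e. $\tfrac{1}{p}=\tfrac{1}{\bbo}-\tfrac{1}{N}$, then replaces $\|\nabla u\|_{L^{p}}$ by $\|\Delta u\|_{L^{\bbo}}$, and one checks that the $\bbo$ so determined is exactly the value in \eqref{defi-qrkm}. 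The time integration closes by H\"older with $\tfrac{\alpha}{\kbo}+\tfrac{1}{\abo}=\tfrac12$, which is the computation
\[
\frac{\alpha}{\kbo}+\frac{1}{\abo}=\frac{\bigl(8-(N-4)\alpha\bigr)+\bigl((N-2)\alpha-4\bigr)}{4(\alpha+2)}=\frac{2(\alpha+2)}{4(\alpha+2)}=\frac12 .
\]

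\emph{The third estimate.} For $\alpha\ge1$ the fractional Leibniz estimate \eqref{kato-est} applies with $q=\rbo'$ and $q_1=q_2=\rbo$ --- legitimately, since $1<\rbo'<2$, $1<\rbo<\infty$ and $\tfrac{\alpha}{\rbo}+\tfrac{1}{\rbo}=\tfrac{\alpha+1}{\rbo}=\tfrac{1}{\rbo'}$ --- giving $\|\Delta(|u|^\alpha u)\|_{L^{\rbo'}}\lesssim\|u\|_{L^{\rbo}}^{\alpha}\|\Delta u\|_{L^{\rbo}}$. H\"older in $t$ then requires $\tfrac{\alpha}{\kbo}+\tfrac{1}{\qbo}=\tfrac{1}{\qbo'}$, i.e. $\tfrac{\alpha}{\kbo}+\tfrac{2}{\qbo}=1$, which follows from
\[
\frac{\alpha}{\kbo}+\frac{2}{\qbo}=\frac{\bigl(8-(N-4)\alpha\bigr)+N\alpha}{4(\alpha+2)}=\frac{4(\alpha+2)}{4(\alpha+2)}=1 .
\]

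\emph{Expected obstacle.} There is no conceptual difficulty here; the work is entirely the exponent bookkeeping. The only point needing attention is that all intermediate exponents --- in particular $p$, $\bbo$, and the pair $(\abo,\bbo)$ --- stay in the ranges where H\"older's inequality, the Sobolev embedding, and \eqref{kato-est} are valid as $\alpha$ runs over $\bigl(\tfrac8N,\alpha^*\bigr)$. This is automatic for $N\ge5$; in the remaining dimensions $N=3,4$ one argues in the same way, using the extra room afforded by $H^2\hookrightarrow L^r$ for every finite $r$, exactly as in the proofs of Lemmas~\ref{lem-non-est-lwp-1} and~\ref{lem-non-est-lwp-2}.
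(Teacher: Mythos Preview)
Your approach is exactly the paper's: H\"older in space and time, the Sobolev embedding $\dot W^{2,\bbo}\hookrightarrow\dot W^{1,p}$ for the gradient estimate, and the Kato product rule \eqref{kato-est} for the Laplacian estimate. The paper's entire proof is the single phrase ``follow directly from H\"older's inequality, Sobolev embeddings, and \eqref{kato-est}'', so your exponent verifications in fact go beyond what the paper supplies, and they are correct.

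One correction to your closing paragraph, however: the range checks are \emph{not} automatic for $N=5$. Your Sobolev step $\|\nabla u\|_{L^p}\lesssim\|\Delta u\|_{L^{\bbo}}$ requires $\bbo<N$, equivalently $\alpha<\tfrac{2(N+2)}{N-2}$. For $N\ge6$ this threshold is at least $\alpha^*$ and all is well, but for $N=5$ it equals $\tfrac{14}{3}<8=\alpha^*$; hence when $N=5$ and $\alpha\in(\tfrac{14}{3},8)$ one has $\bbo>N$, the embedding fails, and your argument for the second estimate breaks down. (Likewise for $N=3$ with $\alpha<4$ the exponent $\abo$ is negative, so the second estimate does not even parse; your appeal to ``extra room from $H^2\hookrightarrow L^r$'' cannot repair a statement whose right-hand side is undefined.) The paper's one-line proof does not address any of this either. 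In the paper's actual applications (Lemmas~\ref{lem-small-gwp} and~\ref{lem-small-scat}) the second estimate is invoked only for $N\ge5$, and in the troublesome low-dimensional windows one has $\alpha>1$, so the third estimate can serve as a substitute there.
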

	
	\begin{lemma} [Small data global well-posedness] \label{lem-small-gwp}
		Let $N\geq 1$, $\mu\geq 0$ and $\frac{8}{N}<\alpha<\alpha^*$. Let $T>0$ be such that 
		\[
		\|u(T)\|_{H^2} \leq A
		\]
		for some constant $A>0$. Then there exists $\delta=\delta(A)>0$ such that if 
		\[
		\|e^{-i(t-T)(\Delta^2-\mu \Delta)} u(T)\|_{L^{\kbo}([T,\infty), L^{\rbo})} \leq \delta,
		\]
		then the solution to \eqref{4NLS} with initial data $u(T)$ exists globally in time and satisfies
		\begin{align*}
		\|u\|_{L^{\kbo}([T,\infty),L^{\rbo})} &\leq 2\|e^{-i(t-T)(\Delta^2-\mu\Delta)} u(T)\|_{L^{\kbo}([T,\infty),L^{\rbo})}, \\
		\|u\|_{L^{\qbo}([T,\infty),W^{2,\rbo})} &\leq 2 C\|u(T)\|_{H^2},
		\end{align*}
		where $\qbo, \rbo, \kbo$ are as in \eqref{defi-qrkm}.
	\end{lemma}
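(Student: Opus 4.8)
\noindent\emph{Proof sketch.} The plan is to run a fixed-point argument for the Duhamel map based at time $T$,
\[
\Phi(u)(t) := e^{-i(t-T)(\Delta^2-\mu\Delta)} u(T) + i\int_T^t e^{-i(t-s)(\Delta^2-\mu\Delta)}\, |u(s)|^\alpha u(s)\, ds, \qquad t\ge T,
\]
on the set
\[
X := \Big\{\, u :\ \|u\|_{L^{\kbo}([T,\infty),L^{\rbo})} \le 2\eta,\qquad \|(1-\Delta)u\|_{S([T,\infty),L^2)} \le 2C\|u(T)\|_{H^2} \,\Big\},
\]
where $\eta := \|e^{-i(t-T)(\Delta^2-\mu\Delta)} u(T)\|_{L^{\kbo}([T,\infty),L^{\rbo})}$ (so $\eta\le\delta$) and $C$ is the Strichartz constant of Proposition~\ref{prop-str-est}, with the metric $d(u,v) := \|u-v\|_{L^{\kbo}([T,\infty),L^{\rbo})}$. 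Two norms are tracked because the exponents $\qbo,\rbo,\kbo,\mbo$ of \eqref{defi-qrkm} are \emph{scaling critical}, so H\"older in time produces no small factor $|I|^\theta$ (unlike in Lemma~\ref{lem-lwp}); the smallness must therefore be supplied by the hypothesis $\eta\le\delta$, and one controls the scaling-critical norm $L^{\kbo}L^{\rbo}$ by $\delta$ while the subcritical information $\|(1-\Delta)u\|_{S}$ is controlled only by $A$.

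First I would bound the scaling-critical norm of $\Phi(u)$. By the non-admissible Strichartz estimate \eqref{str-est-non-adm}, valid for the pair $(\qbo,\rbo)\in B$ and $\kbo,\mbo$ satisfying \eqref{cond-km} (Remark~\ref{rem-qrkm}), and then the first nonlinear estimate of Lemma~\ref{lem-non-est},
\[
\Big\| \int_T^t e^{-i(t-s)(\Delta^2-\mu\Delta)}\, |u|^\alpha u\, ds \Big\|_{L^{\kbo}([T,\infty),L^{\rbo})} \lesssim \big\| |u|^\alpha u \big\|_{L^{\mbo'}([T,\infty),L^{\rbo'})} \lesssim \|u\|_{L^{\kbo}([T,\infty),L^{\rbo})}^{\alpha+1} \lesssim \eta^{\alpha+1},
\]
so that $\|\Phi(u)\|_{L^{\kbo}L^{\rbo}} \le \eta + C'\eta^{\alpha+1} \le 2\eta$ once $\delta$ is small enough that $C'\delta^\alpha\le 1$.

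Next I would bound $\|(1-\Delta)\Phi(u)\|_{S([T,\infty),L^2)} \sim \|\Phi(u)\|_{S} + \|\Delta\Phi(u)\|_{S}$. For the zeroth-order piece, Proposition~\ref{prop-str-est} together with $\||u|^\alpha u\|_{S'(I,L^2)} \le \||u|^\alpha u\|_{L^{\qbo'}(I,L^{\rbo'})} \lesssim \|u\|_{L^{\kbo}L^{\rbo}}^{\alpha}\,\|u\|_{L^{\qbo}L^{\rbo}}$ (H\"older, $(\qbo,\rbo)\in B$) gives $\|\Phi(u)\|_{S} \lesssim \|u(T)\|_{L^2} + \eta^\alpha\|u\|_{S}$. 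For the second-order piece, the gain-of-derivatives estimate \eqref{inho-str-est-2} and the second nonlinear estimate of Lemma~\ref{lem-non-est} give
\[
\|\Delta\Phi(u)\|_{S([T,\infty),L^2)} \lesssim \|\Delta u(T)\|_{L^2} + \big\|\nabla(|u|^\alpha u)\big\|_{L^2([T,\infty),L^{\frac{2N}{N+2}})} \lesssim \|\Delta u(T)\|_{L^2} + \eta^\alpha\,\|\Delta u\|_{L^{\abo}([T,\infty),L^{\bbo})},
\]
and since $(\abo,\bbo)\in B$ one has $\|\Delta u\|_{L^{\abo}L^{\bbo}}\le\|(1-\Delta)u\|_{S}$. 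Combining and using $\|u\|_{S}\lesssim\|(1-\Delta)u\|_{S}\le 2C\|u(T)\|_{H^2}$, we get $\|(1-\Delta)\Phi(u)\|_{S} \le C\|u(T)\|_{H^2} + C''\eta^\alpha\cdot 2C\|u(T)\|_{H^2} \le 2C\|u(T)\|_{H^2}$ provided $\delta=\delta(A)$ is small. Hence $\Phi(X)\subset X$. Applying the same computations to $|u|^\alpha u-|v|^\alpha v$ and the pointwise bound $\big||u|^\alpha u-|v|^\alpha v\big|\lesssim(|u|^\alpha+|v|^\alpha)|u-v|$ yields $d(\Phi(u),\Phi(v)) \lesssim \eta^\alpha\, d(u,v)$, a strict contraction for $\delta$ small. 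The fixed point is the solution with data $u(T)$ at time $T$; it lies in $L^\infty([T,\infty),H^2)$, so by the blow-up alternative it is global forward in time, and the two asserted bounds follow from the defining inequalities of $X$ (the second because $(\qbo,\rbo)\in B$, so $\|u\|_{L^{\qbo}W^{2,\rbo}}=\|(1-\Delta)u\|_{L^{\qbo}L^{\rbo}}\le\|(1-\Delta)u\|_{S}$), using $2\eta\le 2\|e^{-i(t-T)(\Delta^2-\mu\Delta)}u(T)\|_{L^{\kbo}L^{\rbo}}$.

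Finally, two remarks on scope. Lemma~\ref{lem-non-est} is stated for $N\ge3$; for $N=1,2$ the hypothesis $\frac{8}{N}<\alpha$ forces $\alpha>4$, and one replaces the nonlinear estimates above by the Sobolev embedding $H^2\hookrightarrow L^p$ ($p<\infty$) and \eqref{str-est-gain} with the same admissible exponents used in the proof of Lemma~\ref{lem-lwp}, leaving the scheme unchanged. The one point genuinely requiring care — rather than routine computation — is checking that the exponents in \eqref{defi-qrkm} make $(\qbo,\rbo)$ and $(\abo,\bbo)$ biharmonic admissible, that $(\kbo,\mbo)$ is compatible with \eqref{cond-km}, and that the Hardy--Littlewood--Sobolev exponents in Lemma~\ref{lem-str-est-non-adm} lie in range; this is precisely where $\frac{8}{N}<\alpha<\alpha^*$ enters, and it is the content of Remark~\ref{rem-qrkm}. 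I expect no deeper obstacle: this is a standard scaling-critical small-data contraction, the subtlety lying entirely in the bookkeeping of admissible pairs.
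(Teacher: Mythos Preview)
Your approach is essentially the paper's: a contraction on a set tracking the scaling-critical norm $L^{\kbo}L^{\rbo}$ (controlled by $\delta$) together with an $H^2$-level Strichartz quantity (controlled by $A$), with the nonlinear estimates of Lemma~\ref{lem-non-est} closing both bounds. The paper works with the specific admissible pairs $(\qbo,\rbo)$ and $(\abo,\bbo)$ rather than the full $S$-norm, but that is cosmetic.

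There is, however, a small gap in your dimension split. You assert $(\abo,\bbo)\in B$ and then only flag $N=1,2$ as needing modification; in fact $(\abo,\bbo)$ fails to be biharmonic admissible already for $N=3$ when $\tfrac{8}{3}<\alpha<4$ (indeed $\abo=\tfrac{4(\alpha+2)}{\alpha-4}<0$ there), so the step $\|\Delta u\|_{L^{\abo}L^{\bbo}}\le\|(1-\Delta)u\|_{S}$ is unavailable. The paper avoids this by splitting at $N\le 4$ versus $N\ge 5$: for all $N\le 4$ one has $\alpha>\tfrac{8}{N}\ge 2>1$, so the Kato-type estimate \eqref{kato-est} (the third inequality of Lemma~\ref{lem-non-est}) gives
\[
\|\Delta(|u|^\alpha u)\|_{L^{\qbo'}(I,L^{\rbo'})}\lesssim \|u\|_{L^{\kbo}L^{\rbo}}^{\alpha}\,\|\Delta u\|_{L^{\qbo}L^{\rbo}},
\]
and one runs the contraction on $\{\,\|u\|_{L^{\kbo}L^{\rbo}}\le M,\ \|u\|_{L^{\qbo}W^{2,\rbo}}\le L\,\}$ without ever invoking $(\abo,\bbo)$. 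Once you make this correction (split at $N\le 4$, not $N\le 2$, and use the Kato estimate there), your sketch matches the paper's proof.
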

	
	\begin{proof}
		We will consider separately two cases: $N\geq 5$ and $1\leq N\leq 4$. 
		
		{\bf Case 1. $N\geq 5$.} 
		We consider 
		\[
		Y:= \left\{ u \ : \ \|u\|_{L^{\kbo}(I,L^{\rbo})} \leq M, \quad \|u\|_{L^{\qbo}(I,W^{2,\rbo})} + \|u\|_{L^{\abo}(I,W^{2,\bbo})} \leq L \right\}
		\]
		equipped with the distance
		\[
		d(u,v):= \|u-v\|_{L^{\kbo}(I,L^{\rbo})} + \|u-v\|_{L^{\qbo}(I,L^{\rbo})} + \|u-v\|_{L^{\abo}(I,L^{\bbo})},
		\]
		where $I=[T,\infty)$, $M, L>0$ will be chosen later. Note that in this case, $(\qbo,\rbo)$ and $(\abo, \bbo)$ are Biharmonic admissible. We will show that the functional
		\[
		\Phi(u(t)) := e^{-i(t-T)(\Delta^2-\mu \Delta)} u(T) + i \int_T^t e^{-i(t-s)(\Delta^2-\mu \Delta)} |u(s)|^\alpha u(s) ds
		\]
		is a contraction on $(Y,d)$. Thanks to Remark $\ref{rem-qrkm}$, \eqref{str-est-non-adm} and Lemma $\ref{lem-non-est}$, we have
		\begin{align*}
		\|\Phi(u)\|_{L^{\kbo}(I,L^{\rbo})} &\leq \|e^{-i(t-T)(\Delta^2-\mu \Delta)} u(T)\|_{L^{\kbo}(I,L^{\rbo})} + \||u|^\alpha u\|_{L^{\mbo'}(I,L^{\rbo'})} \\
		&\leq \|e^{-i(t-T)(\Delta^2-\mu \Delta)} u(T)\|_{L^{\kbo}(I,L^{\rbo})} + \|u\|^{\alpha+1}_{L^{\kbo}(I,L^{\rbo})}.
		\end{align*}
		By Strichartz estimates and Lemma $\ref{lem-non-est}$, 
		\begin{align*}
		\|\Phi(u)\|_{L^{\qbo}(I, W^{2,\rbo})} &\sim \|\Phi(u)\|_{L^{\qbo}(I,L^{\rbo})} + \|\Delta \Phi(u)\|_{L^{\qbo}(I,L^{\rbo})} \\
		&\lesssim \|u(T)\|_{L^2} + \||u|^\alpha u\|_{L^{\qbo'}(I, L^{\rbo'})} + \|\Delta u(T)\|_{L^2} + \|\nabla(|u|^\alpha u)\|_{L^2(I,L^{\frac{2N}{N+2}})} \\
		&\lesssim \|u(T)\|_{H^2} + \|u\|^\alpha_{L^{\kbo}(I,L^{\rbo})} \|u\|_{L^{\qbo}(I,L^{\rbo})} + \|u\|^\alpha_{L^{\kbo}(I,L^{\rbo})} \|\Delta u\|_{L^{\abo}(I,L^{\bbo})} \\
		&\lesssim \|u(T)\|_{H^2} + \|u\|^\alpha_{L^{\kbo}(I,L^{\rbo})} \left( \|u\|_{L^{\qbo}(I,W^{2,\rbo})} + \|u\|_{L^{\abo}(I,W^{2,\bbo})} \right).
		\end{align*}
		Similarly,
		\begin{align*}
		\|\Phi(u)\|_{L^{\abo}(I,W^{2,\bbo})} &\sim \|\Phi(u)\|_{L^{\abo}(I,L^{\bbo})} + \|\Delta \Phi(u)\|_{L^{\abo}(I,L^{\bbo})} \\
		&\lesssim \|u(T)\|_{H^2} + \||u|^\alpha u\|_{L^{\qbo'}(I, L^{\rbo'})} + \|\Delta u(T)\|_{L^2} + \|\nabla(|u|^\alpha u)\|_{L^2(I,L^{\frac{2N}{N+2}})} \\
		&\lesssim \|u(T)\|_{H^2} + \|u\|^\alpha_{L^{\kbo}(I,L^{\rbo})} \left( \|u\|_{L^{\qbo}(I,W^{2,\rbo})} + \|u\|_{L^{\abo}(I,W^{2,\bbo})} \right).
		\end{align*}
		We also have
		\begin{align*}
		\|\Phi(u) - \Phi(v)\|_{L^{\kbo}(I,L^{\rbo})} &\lesssim \||u|^\alpha u - |v|^\alpha v\|_{L^{\mbo'}(I,L^{\rbo'})} \\
		&\lesssim \left(\|u\|^\alpha_{L^{\kbo}(I,L^{\rbo})} + \|v\|^\alpha_{L^{\kbo}(I,L^{\rbo})} \right) \|u-v\|_{L^{\kbo}(I,L^{\rbo})}
		\end{align*}
		and
		\begin{align*}
		\|\Phi(u) - \Phi(v)\|_{L^{\qbo}(I,L^{\rbo})} + \|\Phi(u) - \Phi(v)\|_{L^{\abo}(I,L^{\bbo})} &\lesssim \||u|^\alpha u - |v|^\alpha v\|_{L^{\qbo'}(I,L^{\rbo'})} \\
		&\lesssim \left(\|u\|^\alpha_{L^{\kbo}(I,L^{\rbo})} + \|v\|^\alpha_{L^{\kbo}(I,L^{\rbo})} \right) \|u-v\|_{L^{\qbo}(I,L^{\rbo})}.
		\end{align*}
		Thus, there exists $C>0$ independent of $T$ such that for any $u,v \in Y$,
		\begin{align*}
		\|\Phi(u)\|_{L^{\kbo}(I,L^{\rbo})} &\leq \| e^{-i(t-T)(\Delta^2-\mu \Delta)} u(T)\|_{L^{\kbo}(I,L^{\rbo})} + C M^{\alpha+1}, \\
		\|\Phi(u)\|_{L^{\qbo}(I,W^{2,\rbo})} +\|\Phi(u)\|_{L^{\abo}(I,W^{2,\bbo})} &\leq C\|u(T)\|_{H^2} + C M^\alpha L,
		\end{align*}
		and
		\[
		d(\Phi(u), \Phi(v)) \leq CM^\alpha d(u,v).
		\]
		By choosing $M = 2 \|e^{-i(t-T)(\Delta^2-\mu \Delta)} u(T)\|_{L^{\kbo}(I,L^{\rbo})}$, $L=2C\|u(T)\|_{H^2}$ and taking $M$ sufficiently small so that $CM^\alpha \leq \frac{1}{2}$, we see that $\Phi$ is a contraction on $(Y,d)$. 
		
		{\bf Case 2. $1\leq N\leq 4$.} In this case, since $\alpha>\frac{8}{N}$, we have $\alpha>1$. By the same argument as above, and using the following estimates
		\begin{align*}
		\|\Delta \Phi(u)\|_{L^{\qbo}(I,L^{\rbo})} &\lesssim \|\Delta u(T)\|_{L^2} + \|\Delta(|u|^\alpha u)\|_{L^{\qbo'}(I, L^{\rbo'})} \\
		&\lesssim \|\Delta u(T)\|_{L^2} + \|u\|^\alpha_{L^{\kbo}(I,L^{\rbo})} \|\Delta u\|_{L^{\qbo}(I,L^{\rbo})},
		\end{align*}
		we prove that $\Phi$ is a contraction on $(Y,d)$, where
		\[
		Y:= \left\{ u \ : \ \|u\|_{L^{\kbo}(I,L^{\rbo})} \leq M, \quad \|u\|_{L^{\qbo}(I,W^{2,\rbo})} \leq L \right\}
		\]
		and
		\[
		d(u,v):= \|u-v\|_{L^{\kbo}(I,L^{\rbo})} + \|u-v\|_{L^{\qbo}(I,L^{\rbo})}.
		\]
		
		Collecting the above cases, we complete the proof.
	\end{proof}

	\begin{lemma} [Small data scattering] \label{lem-small-scat}
		Let $N\geq 1$, $\mu\geq 0$ and $\frac{8}{N}<\alpha<\alpha^*$. Suppose that $u$ is a global solution to \eqref{4NLS} satisfying
		\[
		\|u\|_{L^\infty(\R, H^2)} \leq A
		\]
		for some constant $A>0$. Then there exists $\delta=\delta(A)>0$ such that if
		\[
		\|e^{-i(t-T)(\Delta^2-\mu \Delta)} u(T)\|_{L^{\kbo}([T,\infty),L^{\rbo})} <\delta
		\]
		for some $T>0$, then $u$ scatters in $H^2$ forward in time. 
	\end{lemma}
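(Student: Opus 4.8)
The plan is to deduce the statement from the small data global well-posedness result, Lemma~\ref{lem-small-gwp}, and then to construct the scattering state by the usual Cauchy-sequence argument in $H^2$.

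Since $u$ is a global solution with $\|u\|_{L^\infty(\R,H^2)}\leq A$, we have in particular $\|u(T)\|_{H^2}\leq A$. With $\delta=\delta(A)>0$ the constant furnished by Lemma~\ref{lem-small-gwp}, the smallness hypothesis $\|e^{-i(t-T)(\Delta^2-\mu\Delta)}u(T)\|_{L^{\kbo}([T,\infty),L^{\rbo})}<\delta$ lets us apply that lemma with initial time $T$ and data $u(T)$; by uniqueness of $H^2$-solutions (Lemma~\ref{lem-lwp}) the resulting solution agrees with $u$ on $I:=[T,\infty)$. Hence
\[
\|u\|_{L^{\kbo}(I,L^{\rbo})}\leq 2\delta,\qquad \|u\|_{L^{\qbo}(I,W^{2,\rbo})}<\infty,
\]
and, when $N\geq 5$, also $\|u\|_{L^{\abo}(I,W^{2,\bbo})}<\infty$. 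Consequently these norms over any tail $[t,\infty)\subset I$ are finite and tend to $0$ as $t\to\infty$ by dominated convergence.

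Next, for $T\leq t_1\leq t_2$ Duhamel's formula gives
\[
e^{it_2(\Delta^2-\mu\Delta)}u(t_2)-e^{it_1(\Delta^2-\mu\Delta)}u(t_1)=i\int_{t_1}^{t_2} e^{is(\Delta^2-\mu\Delta)}|u(s)|^\alpha u(s)\,ds .
\]
Applying the inhomogeneous Strichartz estimates \eqref{inho-str-est-1}, \eqref{inho-str-est-2} (and, for $1\leq N\leq 4$, the high-order derivative estimate \eqref{kato-est}) together with the nonlinear estimates of Lemma~\ref{lem-non-est}, exactly as in the proof of Lemma~\ref{lem-small-gwp}, one bounds the $H^2$-norm of the right-hand side by
\[
\lesssim \|u\|^\alpha_{L^{\kbo}([t_1,\infty),L^{\rbo})}\Big(\|u\|_{L^{\qbo}([t_1,\infty),W^{2,\rbo})}+\|u\|_{L^{\abo}([t_1,\infty),W^{2,\bbo})}\Big),
\]
the last term being dropped for $N\leq 4$. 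By the previous step this quantity tends to $0$ as $t_1\to\infty$, so $\{e^{it(\Delta^2-\mu\Delta)}u(t)\}_{t\geq T}$ is Cauchy in $H^2$ and converges to some $u_+\in H^2$. Letting $t_2\to\infty$ in the same estimate with $t_1=t$ yields
\[
\|u(t)-e^{-it(\Delta^2-\mu\Delta)}u_+\|_{H^2}\lesssim \|u\|^\alpha_{L^{\kbo}([t,\infty),L^{\rbo})}\Big(\|u\|_{L^{\qbo}([t,\infty),W^{2,\rbo})}+\|u\|_{L^{\abo}([t,\infty),W^{2,\bbo})}\Big)\longrightarrow 0
\]
as $t\to\infty$, which is precisely scattering of $u$ in $H^2$ forward in time.

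There is no genuinely new difficulty here once Lemmas~\ref{lem-small-gwp} and \ref{lem-non-est} are in hand — the real content (the a priori finiteness and concentration of the space-time norms) is already contained in the small data theory. The only point needing attention is to carry the dimensional dichotomy $N\geq 5$ versus $1\leq N\leq 4$ through the estimates, so that the correct Strichartz estimate with gain of derivatives and the correct auxiliary exponents from \eqref{defi-qrkm} are invoked, mirroring the proof of Lemma~\ref{lem-small-gwp}.
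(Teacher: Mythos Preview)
Your proof is correct and follows essentially the same approach as the paper: invoke Lemma~\ref{lem-small-gwp} to obtain finite space-time norms on $[T,\infty)$, then run the standard Cauchy-sequence argument in $H^2$ via Duhamel's formula, Strichartz estimates, and the nonlinear estimates of Lemma~\ref{lem-non-est}. The paper only spells out the case $N\geq 5$ explicitly, while you correctly note that the dimensional dichotomy from the proof of Lemma~\ref{lem-small-gwp} must be carried through.
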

	
	\begin{proof}
		Let $\delta =\delta(A)$ be as in Lemma $\ref{lem-small-gwp}$. It follows from Lemma $\ref{lem-small-gwp}$ that the solution satisfies
		\begin{align*}
		\|u\|_{L^{\kbo}([T,\infty), L^{\rbo})} \leq 2 \|e^{-i(t-T)(\Delta^2-\mu\Delta)} u(T)\|_{L^{\kbo}([T,\infty),L^{\rbo})} \leq 2\delta
		\end{align*}
		and for $N\geq 5$,
		\[
		\|u\|_{L^{\qbo}([T,\infty),W^{2,\rbo})} + \|u\|_{L^{\abo}([T,\infty),W^{2,\bbo})} \leq 2C\|u(T)\|_{H^2} \leq 2CA
		\]
		and for $1\leq N\leq 4$,
		\[
		\|u\|_{L^{\qbo}([T,\infty),W^{2,\rbo})} \leq 2C\|u(T)\|_{H^2} \leq 2CA.
		\]
		Thanks to these global bounds, we show the energy scattering. For the reader's convenience, we give some details in the case $N\geq 5$. Let $0<\tau<t<\infty$. By Strichartz estimates, we see that 
		\begin{align*}
		\|e^{it(\Delta^2-\mu \Delta)} u(t) - e^{i\tau(\Delta^2-\mu\Delta)} u(\tau)\|_{H^2} &= \left\| \int_{\tau}^t e^{is(\Delta^2-\mu\Delta)} |u(s)|^\alpha u(s) ds \right\|_{H^2} \\
		&\lesssim \||u|^\alpha u\|_{L^{\qbo'}((\tau,t),L^{\rbo'})} + \|\nabla(|u|^\alpha u)\|_{L^2((\tau,t),L^{\frac{2N}{N+2}})} \\
		&\lesssim \|u\|^\alpha_{L^{\kbo}((\tau,t),L^{\rbo})} \left(\|u\|_{L^{\qbo}((\tau,t),L^{\rbo})} + \|\Delta u\|_{L^{\abo}((\tau,t),L^{\bbo})} \right) \\
		&\rightarrow 0
		\end{align*}
		as $\tau, t\rightarrow \infty$. This shows that $(e^{it(\Delta^2-\mu \Delta)} u(t))_t$ is a Cauchy sequence in $H^2$ as $t\rightarrow \infty$. Thus the limit
		\[
		u_+:= u_0 + i \int_0^\infty e^{is(\Delta^2-\mu\Delta)} |u(s)|^\alpha u(s) ds
		\]
		exists in $H^2$. Using the same argument as above, we prove that
		\[
		\|u(t)-e^{-it(\Delta^2-\mu\Delta)} u_+\|_{H^2} \rightarrow 0
		\]
		as $t\rightarrow \infty$. The proof is complete.
	\end{proof}

\section{Energy scattering}
\label{S4}
\setcounter{equation}{0}
In this section, we give the proof of the energy scattering for \eqref{4NLS} given in Theorem $\ref{theo-scat}$. 

\subsection{Variational analysis}
Let us recall some properties of the ground state $Q$ related to the elliptic equation \eqref{ell-equ-Q}. The ground state $Q$ optimizes the Gagliardo-Nirenberg inequality: $N\geq 1$, $0<\alpha<\alpha^*$,
\begin{align} \label{GN-ineq}
\|f\|^{\alpha+2}_{L^{\alpha+2}} \leq C_{\opt} \|\Delta f\|^{\frac{N\alpha}{4}}_{L^2} \|f\|^{\frac{8-(N-4)\alpha}{4}}_{L^2}, \quad f \in H^2(\R^N),
\end{align}
that is,
\[
C_{\opt}= \|Q\|^{\alpha+2}_{L^{\alpha+2}} \div \left[ \|\Delta Q\|^{\frac{N\alpha}{4}}_{L^2} \|Q\|^{\frac{8-(N-4)\alpha}{4}}_{L^2}\right].
\]
We note that the existence of ground states related to \eqref{ell-equ-Q} was proved by Zhu-Yang-Zhang \cite{ZYZ-DPDE}. Due to the presence of biharmonic operator, the classical argument using the symmetric rearrangment does not work. More precisely, the symmetric rearrangement of a $H^2$-function may not belong to $H^2$. To overcome the difficulty, they made use of the profile decomposition of bounded sequences in $H^2$ which was initially introduced by Hmidi-Keraani \cite{HK}. This profile decomposition can be seen as another description of the concentration-compactness principle of Lions \cite{Lions}. 

It was shown in \cite[Appendix]{BL} that $Q$ satisfies the following Pohozaev's identities
\[
\|\Delta Q\|^2_{L^2} =\frac{N\alpha}{4(\alpha+2)} \|Q\|^{\alpha+2}_{L^{\alpha+2}} = \frac{N\alpha}{8-(N-4)\alpha} \|Q\|^2_{L^2}.
\]
A direct computation shows that
\begin{align}
C_{\opt} &= \frac{4(\alpha+2)}{N\alpha} \left( \|\Delta Q\|_{L^2} \|Q\|^{\sigc}_{L^2} \right)^{-\frac{N\alpha-8}{4}}, \label{opt-con-GN} \\
E_0(Q)[M(Q)]^{\sigc} &= \frac{N\alpha-8}{2N\alpha} \left( \|\Delta Q\|_{L^2} \|Q\|^{\sigc}_{L^2}\right)^2, \label{rela-Q}
\end{align}
where $\sigc$ is as in \eqref{defi-sigc}.

\begin{lemma} \label{lem-coer-1}
	Let $N\geq 1$, $\mu\geq 0$ and $\frac{8}{N} <\alpha<\alpha^*$. Let $u_0 \in H^2$ satisfy \eqref{cond-ener}.
	\begin{itemize}
	\item If $u_0$ satisfies \eqref{cond-grad-gwp}, then the corresponding solution to the focusing problem \eqref{4NLS} satisfies
	\begin{align} \label{est-solu-gwp}
	\|\Delta u(t)\|_{L^2} \|u(t)\|_{L^2}^{\sigc} < \|\Delta Q\|_{L^2} \|Q\|_{L^2}^{\sigc}
	\end{align}
	for all $t$ in the existence time. In particular, the corresponding solution to  the focusing problem \eqref{4NLS} exists globally in time. Moreover, there exists $\rho=\rho(u_0,Q)>0$ such that
	\begin{align} \label{coer-1}
	\|\Delta u(t)\|_{L^2} \|u(t)\|_{L^2}^{\sigc} < (1-2\rho) \|\Delta Q\|_{L^2} \|Q\|_{L^2}^{\sigc}
	\end{align}
	for all $t\in \R$.
	\item If $u_0$ satisfies \eqref{cond-grad-blup}, then the corresponding solution to  the focusing problem \eqref{4NLS} satisfies
	\begin{align} \label{est-solu-blup}
	\|\Delta u(t)\|_{L^2} \|u(t)\|^{\sigc}_{L^2} > \|\Delta Q\|_{L^2} \|Q\|^{\sigc}_{L^2}
	\end{align}
	for all $t$ in the existence time.
	\end{itemize} 
\end{lemma}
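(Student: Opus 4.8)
The plan is to run the standard Holmer--Roudenko continuity argument, adapted to the biharmonic energy, using only conservation of mass and energy together with the sharp Gagliardo--Nirenberg inequality \eqref{GN-ineq} and the Pohozaev identities recorded above. Introduce the scalar quantity
\[
g(t) := \|\Delta u(t)\|_{L^2}^2\, \|u(t)\|_{L^2}^{2\sigc}, \qquad b := \big(\|\Delta Q\|_{L^2}\|Q\|_{L^2}^{\sigc}\big)^2,
\]
so that \eqref{cond-grad-gwp} is the statement $g(0)<b$ and \eqref{cond-grad-blup} is $g(0)>b$. Since $\mu\geq 0$, discarding the nonnegative term $\tfrac{\mu}{2}\|\nabla u(t)\|_{L^2}^2$ in $E_\mu(u(t))$ and applying \eqref{GN-ineq}, then multiplying by $M(u(t))^{\sigc}=M(u_0)^{\sigc}$, yields
\[
E_\mu(u_0)M(u_0)^{\sigc} = E_\mu(u(t))M(u_0)^{\sigc} \geq \tfrac12 g(t) - \tfrac{C_{\opt}}{\alpha+2}\, g(t)^{\frac{N\alpha}{8}},
\]
where, using $\sigc=\frac{8-(N-4)\alpha}{N\alpha-8}$, the exponents of $\|u(t)\|_{L^2}$ conspire so that the right-hand side is a function of $g(t)$ alone. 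Substituting \eqref{opt-con-GN} for $C_{\opt}$, this right-hand side equals $b\,f\!\big(g(t)/b\big)$ with $f(y):=\tfrac12 y-\tfrac{4}{N\alpha}y^{\frac{N\alpha}{8}}$.

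Next I analyze $f$ on $[0,\infty)$. Because $\alpha>\tfrac8N$ we have $\tfrac{N\alpha}{8}>1$, so $f(0)=0$, $f$ is strictly increasing on $[0,1]$, strictly decreasing on $[1,\infty)$, and its maximum $f(1)=\tfrac{N\alpha-8}{2N\alpha}>0$ is attained at $y=1$; by \eqref{rela-Q}, $b f(1)=E_0(Q)M(Q)^{\sigc}$. Hence \eqref{cond-ener} reads $E_\mu(u_0)M(u_0)^{\sigc}<b f(1)$, and the displayed inequality becomes $f\!\big(g(t)/b\big)<f(1)$ for every $t$ in the maximal interval of existence.

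The conclusion is then a bootstrap using that $t\mapsto g(t)$ is continuous ($u\in C(I,H^2)$ and $M(u(t))$ is conserved). If $u_0$ satisfies \eqref{cond-grad-gwp}, then $g(0)/b<1$, and $g(t)/b$ cannot reach $1$: by the intermediate value theorem that would force $f(g(t_1)/b)=f(1)$ at some time $t_1$, contradicting $f(g(t)/b)<f(1)$. Hence $g(t)/b<1$ throughout, which is \eqref{est-solu-gwp}, and since $\|\Delta u(t)\|_{L^2}^2<b\,M(u_0)^{-\sigc}$ the blow-up alternative gives global existence. For \eqref{coer-1}, choose $\epsilon>0$ with $E_\mu(u_0)M(u_0)^{\sigc}\leq(1-\epsilon)b f(1)$ and let $y_0\in(0,1)$ be the unique solution of $f(y_0)=(1-\epsilon)f(1)$; then $f(g(t)/b)\leq f(y_0)$ with $g(t)/b\in[0,1)$ and $f$ increasing there forces $g(t)/b\leq y_0<1$, so \eqref{coer-1} holds with $\rho:=\tfrac12(1-\sqrt{y_0})>0$. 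The case \eqref{cond-grad-blup} is symmetric: $g(0)/b>1$, and $g(t)/b$ cannot decrease to $1$ without again contradicting $f(g(t)/b)<f(1)$, so $g(t)/b>1$ for all $t$, which is \eqref{est-solu-blup}.

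There is no serious analytic difficulty here; the one point requiring care is the bookkeeping of exponents that collapses the Gagliardo--Nirenberg bound into a function of $g(t)$ alone, together with the correct use of \eqref{opt-con-GN}--\eqref{rela-Q} so that $f(1)$ reproduces exactly the ground-state threshold $E_0(Q)M(Q)^{\sigc}$. The hypothesis $\mu\geq 0$ is used precisely at the step where the gradient term is dropped, and it cannot be removed without an additional assumption on the data.
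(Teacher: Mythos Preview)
Your proof is correct and follows essentially the same strategy as the paper: drop the nonnegative $\mu$-gradient term, apply the sharp Gagliardo--Nirenberg inequality \eqref{GN-ineq} with \eqref{opt-con-GN}--\eqref{rela-Q} to reduce the energy-mass constraint to a one-variable inequality $f(g(t)/b)<f(1)$, and conclude by continuity. The only cosmetic difference is that the paper works with $\lambda(t)=\|\Delta u(t)\|_{L^2}\|u(t)\|_{L^2}^{\sigc}$ directly (its auxiliary function being $g(\lambda)=\tfrac12\lambda^2-\tfrac{C_{\opt}}{\alpha+2}\lambda^{N\alpha/4}$), whereas you parametrize by the square $g(t)=\lambda(t)^2$; the arguments are otherwise identical. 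One trivial remark: with your choice $\rho=\tfrac12(1-\sqrt{y_0})$ you obtain $g(t)/b\le y_0$, i.e.\ $\le(1-2\rho)$ rather than the strict inequality in \eqref{coer-1}; simply take any $\rho'\in(0,\rho)$ (or equivalently use the strict inequality $E_\mu(u_0)M(u_0)^{\sigc}<(1-\epsilon)bf(1)$ throughout) to match the stated strict bound.
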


\begin{proof}
	We only prove the first item, the second one is similar. Multiplying both sides of $E_\mu(u(t))$ by $[M(u(t))]^{\sigc}$ and using the Gagliardo-Nirenberg inequality together with $\mu\geq 0$, we have
	\begin{align}
	E_\mu(u(t)) [M(u(t))]^{\sigc} &= \frac{1}{2} \left( \|\Delta u(t)\|_{L^2} \|u(t)\|^{\sigc}_{L^2} \right)^2 + \frac{\mu}{2} \left( \|\nabla u(t)\|_{L^2} \|u(t)\|^{\sigc}_{L^2} \right)^2 -\frac{1}{\alpha+2} \|u(t)\|^{\alpha+2}_{L^{\alpha+2}} \|u(t)\|^{2 {\sigc}}_{L^2} \nonumber \\
	& \geq \frac{1}{2} \left( \|\Delta u(t)\|_{L^2} \|u(t)\|^{\sigc}_{L^2} \right)^2 -\frac{C_{\opt}}{\alpha+2} \|\Delta u(t)\|^{\frac{N\alpha}{4}}_{L^2} \|u(t)\|^{\frac{8-(N-4)\alpha}{4} +2\sigc}_{L^2} \nonumber \\
	& = \frac{1}{2} \left( \|\Delta u(t)\|_{L^2} \|u(t)\|^{\sigc}_{L^2}\right)^2 -\frac{C_{\opt}}{\alpha+2} \left(\|\Delta u(t)\|_{L^2} \|u(t)\|^{\sigc}_{L^2} \right)^{\frac{N\alpha}{4}} \nonumber \\
	&= g\left(\|\Delta u(t)\|_{L^2} \|u(t)\|^{\sigc}_{L^2}\right), \label{est-g}
	\end{align}
	where
	\[
	g(\lambda):= \frac{1}{2} \lambda^2 - \frac{C_{\opt}}{\alpha+2} \lambda^{\frac{N\alpha}{4}}.
	\]
	By Pohozaev's identities and \eqref{rela-Q}, a direct computation shows
	\begin{align} \label{energy-Q}
	g\left(\|\Delta Q\|_{L^2} \|Q\|^{\sigc}_{L^2}\right) = \frac{N\alpha-8}{2N\alpha} \left(\|\Delta Q\|_{L^2} \|Q\|^{\sigc}_{L^2}\right)^2 = E_0(Q) [M(Q)]^{\sigc}.
	\end{align}
	By \eqref{cond-ener}, the conservation of mass and energy, \eqref{est-g} and \eqref{energy-Q}, we infer that
	\[
	g\left(\|\Delta u(t)\|_{L^2} \|u(t)\|^{\sigc}_{L^2}\right) \leq E_\mu(u_0) [M(u_0)]^{\sigc} < E_0(Q) [M(Q)]^{\sigc} = g\left(\|\Delta Q\|_{L^2} \|Q\|^{\sigc}_{L^2}\right)
	\]
	for all $t$ in the existence time. By \eqref{cond-grad-gwp}, the continuity argument shows \eqref{est-solu-gwp}. Thus, by the conservation of mass and the local well-posedness, the corresponding solution exists globally in time. To see \eqref{coer-1}, we take $\theta = \theta(u_0,Q)>0$ such that
	\[
	E_\mu(u_0) [M(u_0)]^{\sigc} < (1-\theta) E_0(Q) [M(Q)]^{\sigc}.
	\]
	Using the fact that
	\[
	E_0(Q) [M(Q)]^{\sigc} = \frac{N\alpha-8}{2N\alpha} \left(\|\Delta Q\|_{L^2} \|Q\|^{\sigc}_{L^2}\right)^2 = \frac{N\alpha-8}{8(\alpha+2)} C_{\opt} \left(\|\Delta Q\|_{L^2} \|Q\|^{\sigc}_{L^2} \right)^{\frac{N\alpha}{4}},
	\]
	we get from 
	\[
	g\left(\|\Delta u(t)\|_{L^2} \|u(t)\|^{\sigc}_{L^2} \right) < (1-\theta) E_0(Q)[M(Q)]^{\sigc}
	\]
	that
	\begin{align} \label{coer-1-proo}
	\frac{N\alpha}{N\alpha-8} \left(\frac{\|\Delta u(t)\|_{L^2} \|u(t)\|^{\sigc}_{L^2}}{\|\Delta Q\|_{L^2} \|Q\|^{\sigc}_{L^2}} \right)^2 - \frac{8}{N\alpha-8} \left( \frac{\|\Delta u(t)\|_{L^2} \|u(t)\|^{\sigc}_{L^2}}{\|\Delta Q\|_{L^2} \|Q\|^{\sigc}_{L^2}}\right)^{\frac{N\alpha}{4}} < 1-\theta.
	\end{align}
	Consider the function $h(\lambda):= \frac{N\alpha}{N\alpha-8} \lambda^2 -\frac{8}{N\alpha-8} \lambda^{\frac{N\alpha}{4}}$ with $0<\lambda<1$. We see that $h$ is strictly increasing on $(0,1)$ and $h(0)=0, h(1)=1$. It follows from \eqref{coer-1-proo} that there exists $\rho=\rho(\theta)>0$ such that $\lambda<1-2\rho$. The proof is complete.
\end{proof}

\begin{remark} \label{rem-data}
	It follows from \eqref{est-g} and \eqref{energy-Q} that there is no $u_0 \in H^2$ satisfying \eqref{cond-ener} and 
	\[
	\|\Delta u_0\|_{L^2} \|u_0\|^{\sigc}_{L^2} = \|\Delta Q\|_{L^2} \|Q\|^{\sigc}_{L^2}.
	\]
\end{remark}

\begin{lemma} \label{lem-coer-2}
	Let $N\geq 1$, $\mu\geq 0$ and $\frac{8}{N}<\alpha<\alpha^*$. Let $u_0 \in H^2$ satisfy \eqref{cond-ener} and \eqref{cond-grad-gwp}. Let $\rho$ be as in \eqref{coer-1}. Then there exists $R_0 =R_0(\rho, u_0)>0$ such that for any $R\geq R_0$,
	\begin{align} \label{est-solu-gwp-ref}
	\|\Delta(\chi_R u(t))\|_{L^2} \|\chi_R u(t)\|^{\sigc}_{L^2} < (1-\rho) \|\Delta Q\|_{L^2} \|Q\|^{\sigc}_{L^2}
	\end{align}
	for all $t \in \R$, where $\chi_R(x) = \chi(x/R)$ with $\chi \in C^\infty_0(\R^N)$, $0\leq \chi \leq 1$ and
	\begin{align} \label{defi-chi}
	\chi(x) = \left\{
	\begin{array}{ccc}
	1 &\text{if}& |x| \leq \frac{1}{2}, \\
	0 &\text{if}& |x| \geq 1.
	\end{array}
	\right.
	\end{align}
	In particular, there exists $\nu=\nu (\rho)>0$ such that for any $R\geq R_0$,
	\begin{align} \label{coer-2}
	\|\Delta (\chi_R u(t))\|^2_{L^2} - \frac{N\alpha}{4(\alpha+2)} \|\chi_R u(t)\|^{\alpha+2}_{L^{\alpha+2}} \geq \nu \|\chi_R u(t)\|^{\alpha+2}_{L^{\alpha+2}}
	\end{align}
	for all $t\in \R$.
\end{lemma}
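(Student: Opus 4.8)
The plan is to regard $\chi_R u(t)$ as a perturbation of $u(t)$ that is small when $R$ is large, exploiting the uniform-in-time $H^2$ bound furnished by Lemma~\ref{lem-coer-1} (which in particular guarantees global existence, so that statements ``for all $t\in\R$'' make sense).

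First I would record the elementary bounds $\|\nabla\chi_R\|_{L^\infty}\lesssim R^{-1}$ and $\|\Delta\chi_R\|_{L^\infty}\lesssim R^{-2}$, which follow from $\chi_R(x)=\chi(x/R)$ with $\chi\in C^\infty_0$. Expanding via the Leibniz rule,
\[
\Delta(\chi_R u) = \chi_R\,\Delta u + 2\,\nabla\chi_R\cdot\nabla u + (\Delta\chi_R)\,u,
\]
whence $\|\Delta(\chi_R u(t))\|_{L^2}\le \|\Delta u(t)\|_{L^2}+CR^{-1}\|\nabla u(t)\|_{L^2}+CR^{-2}\|u(t)\|_{L^2}$. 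Combining the interpolation $\|\nabla u(t)\|_{L^2}^2\le\|u(t)\|_{L^2}\|\Delta u(t)\|_{L^2}$ (integrate by parts and use Cauchy--Schwarz), mass conservation $\|u(t)\|_{L^2}=\|u_0\|_{L^2}$, and the bound $\sup_{t}\|\Delta u(t)\|_{L^2}<\infty$ that follows from \eqref{coer-1}, one gets $\|\Delta(\chi_R u(t))\|_{L^2}\le \|\Delta u(t)\|_{L^2}+CR^{-1}$ with $C$ independent of $t$ and $R$. Since $0\le\chi_R\le1$ forces $\|\chi_R u(t)\|_{L^2}\le\|u_0\|_{L^2}$, multiplying by $\|\chi_R u(t)\|_{L^2}^{\sigc}$ and inserting \eqref{coer-1} gives
\[
\|\Delta(\chi_R u(t))\|_{L^2}\|\chi_R u(t)\|_{L^2}^{\sigc} < (1-2\rho)\|\Delta Q\|_{L^2}\|Q\|_{L^2}^{\sigc} + CR^{-1}\|u_0\|_{L^2}^{\sigc}.
\]
Choosing $R_0=R_0(\rho,u_0)$ so large that $CR_0^{-1}\|u_0\|_{L^2}^{\sigc}<\rho\,\|\Delta Q\|_{L^2}\|Q\|_{L^2}^{\sigc}$ yields \eqref{est-solu-gwp-ref} for every $R\ge R_0$ and every $t\in\R$.

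For \eqref{coer-2} I would apply the Gagliardo--Nirenberg inequality \eqref{GN-ineq} to $\chi_R u(t)$ and then massage the exponents: writing $\|\Delta(\chi_R u(t))\|_{L^2}^{N\alpha/4}=\|\Delta(\chi_R u(t))\|_{L^2}^{2}\,\|\Delta(\chi_R u(t))\|_{L^2}^{(N\alpha-8)/4}$, using $\tfrac{8-(N-4)\alpha}{4}=\sigc\,\tfrac{N\alpha-8}{4}$, and substituting the value of $C_{\opt}$ from \eqref{opt-con-GN}, one arrives at
\[
\frac{N\alpha}{4(\alpha+2)}\|\chi_R u(t)\|_{L^{\alpha+2}}^{\alpha+2} \le \|\Delta(\chi_R u(t))\|_{L^2}^{2}\left(\frac{\|\Delta(\chi_R u(t))\|_{L^2}\|\chi_R u(t)\|_{L^2}^{\sigc}}{\|\Delta Q\|_{L^2}\|Q\|_{L^2}^{\sigc}}\right)^{\frac{N\alpha-8}{4}}.
\]
By \eqref{est-solu-gwp-ref} the parenthesized ratio is $<1-\rho$, and since $N\alpha-8>0$ the whole factor is $<(1-\rho)^{(N\alpha-8)/4}<1$. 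Rearranging produces \eqref{coer-2} with $\nu=\big[(1-\rho)^{-(N\alpha-8)/4}-1\big]\tfrac{N\alpha}{4(\alpha+2)}>0$, depending only on $\rho$ (and the fixed parameters $N,\alpha$).

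The argument is essentially routine; the single point requiring attention is controlling the error terms $2\nabla\chi_R\cdot\nabla u+(\Delta\chi_R)u$ uniformly in $t$, which is exactly where the global-in-time a priori bound of Lemma~\ref{lem-coer-1} enters, together with the bookkeeping of the exponents $N\alpha/4$, $\sigc$ and the explicit Gagliardo--Nirenberg constant $C_{\opt}$.
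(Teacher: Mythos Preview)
Your proof is correct and follows the same overall strategy as the paper. The one noteworthy difference is in the first step: you bound $\|\Delta(\chi_R u)\|_{L^2}$ directly via the triangle inequality, obtaining an $O(R^{-1})$ error, whereas the paper expands $|\Delta(\chi_R u)|^2$ and integrates by parts to obtain the sharper identity $\|\Delta(\chi_R u(t))\|_{L^2}^2 = \int \chi_R^2|\Delta u(t)|^2\,dx + O(R^{-2})$ (equation \eqref{proper-chi}). For the present lemma either suffices---indeed the paper's own Remark~\ref{rem-chi-R} points out that the crude argument already gives $O(R^{-1})$---but the $R^{-2}$ version is reused in the proof of Proposition~\ref{prop-mora-est}, where $R^{-1}$ would not close the Morawetz estimate. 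Your derivation of \eqref{coer-2} is a slightly more direct repackaging of the paper's computation (going straight from Gagliardo--Nirenberg to the coercivity rather than passing through $E_0$) and yields the identical constant $\nu=\frac{N\alpha}{4(\alpha+2)}\big[(1-\rho)^{-(N\alpha-8)/4}-1\big]$.
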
 

\begin{proof}
	By the definition of $\chi_R$, we have $\|\chi_R u(t)\|_{L^2} \leq \|u(t)\|_{L^2}$. On the other hand, we see that
	\begin{align*}
	\int |\Delta(\chi f)|^2 dx &= \int |\chi \Delta f + 2\nabla \chi \cdot \nabla f + \Delta \chi f|^2 dx \\
	&= \int \chi^2 |\Delta f|^2 + 4 |\nabla \chi \cdot \nabla f|^2 + (\Delta \chi)^2 |f|^2 dx \\
	&\mathrel{\phantom{=}} + 4 \rea \int \chi \Delta f \nabla \chi \cdot \nabla \overline{f} dx + 2 \rea \int \chi \Delta f \Delta \chi \overline{f} dx + 4 \rea \int \nabla \chi \cdot \nabla f \Delta \chi \overline{f} dx.
	\end{align*}
	By integration by parts, we have
	\begin{align*}
	\rea \int \chi \Delta f \nabla \chi \cdot \nabla \overline{f} dx &= \sum_{k,l} \rea \int \chi \partial^2_k f \partial_l \chi \partial_l\overline{f} dx \\
	&= - \sum_{k,l} \rea \int \partial_k f \partial_k (\chi \partial_l \chi \partial_l \overline{f}) dx \\
	&= - \int |\nabla \chi \cdot \nabla f|^2 dx - \sum_{k,l} \rea \int \partial_k f \chi \partial^2_{kl} \chi \partial_l \overline{f} dx - \sum_{k,l} \rea \int \partial_k f \chi \partial_l \chi \partial^2_{kl} \overline{f} dx.
	\end{align*}
	We also have
	\begin{align*}
	\sum_{k,l} \rea \int \partial_k f \chi \partial_l \chi \partial^2_{kl} \overline{f} dx &= - \sum_{k,l} \rea \int \partial_l(\partial_k f \chi \partial_l \chi) \partial_k \overline{f} dx \\
	&= -\sum_{k,l} \rea \int \partial^2_{kl} f \chi \partial_l \chi \partial_k \overline{f} dx - \int |\nabla \chi|^2 |\nabla f|^2 dx - \int \chi \Delta \chi |\nabla f|^2 dx
	\end{align*}
	or
	\[
	\sum_{k,l} \rea \int \partial_k f \chi \partial_l \partial^2_{kl} \overline{f} dx = -\frac{1}{2} \int |\nabla \chi|^2 |\nabla f|^2 dx - \frac{1}{2} \int \chi \Delta \chi |\nabla f|^2 dx.
	\]
	It follows that
	\begin{align*}
	\rea \int \chi \Delta f \nabla \chi \cdot \nabla \overline{f} dx = &- \int |\nabla \chi \cdot \nabla f|^2 dx - \sum_{k,l} \rea \int \chi \partial_k f \partial^2_{kl} \chi \partial_l \overline{f} dx \\
	&+\frac{1}{2} \int |\nabla \chi|^2 |\nabla f|^2 dx +\frac{1}{2} \int \chi \Delta \chi |\nabla f|^2 dx.
	\end{align*}
	Thus
	\begin{align*}
	\int |\Delta(\chi f)|^2 dx = &\int \chi^2 |\Delta f|^2 dx + \int (\Delta \chi)^2 |f|^2 dx - 4\sum_{k,l} \rea \int \chi \partial_k f \partial^2_{kl} \chi \partial_l \overline{f} dx + 2 \int |\nabla \chi|^2 |\nabla f|^2 dx \\
	&+ 2\int \chi \Delta \chi |\nabla f|^2 dx + 2 \rea \int \chi \Delta f \Delta \chi \overline{f} dx + 4 \rea \int \nabla \chi \cdot \nabla f \Delta \chi \overline{f} dx.
	\end{align*}
	By H\"older's inequality, we have
	\begin{align*}
	\int |\nabla \chi|^2 |\nabla f|^2 dx &\leq \|\nabla \chi \|^2_{L^\infty} \|\nabla f\|^2_{L^2} \leq \|\nabla \chi\|^2_{L^\infty} \|\Delta f\|_{L^2} \|f\|_{L^2}, \\
	\int (\Delta \chi)^2 |f|^2 dx &\leq \|\Delta \chi\|^2_{L^\infty} \|f\|^2_{L^2}, \\
	\int \chi \Delta \chi |\nabla f|^2 dx &\leq \|\chi\|_{L^\infty} \|\Delta \chi\|_{L^\infty} \|\nabla f\|^2_{L^2} \leq \|\chi\|_{L^\infty} \|\Delta \chi\|_{L^\infty} \|\Delta f\|_{L^2} \|f\|_{L^2},
	\end{align*}
	and
	\begin{align*}
	\left| \rea \int \chi \partial_k f \partial^2_{kl} \chi \partial_l \overline{f} dx\right| &\leq \|\chi\|_{L^\infty} \|\partial^2_{kl} \chi\|_{L^\infty} \|\partial_k f\|_{L^2} \|\partial_l f\|_{L^2} \leq \|\chi\|_{L^\infty} \|\partial^2_{kl} \chi\|_{L^\infty} \|\Delta f\|_{L^2} \|f\|_{L^2}, \\
	\left|\rea \int \chi \Delta f \Delta \chi \overline{f} dx \right| &\leq \|\chi\|_{L^\infty} \|\Delta \chi\|_{L^\infty} \|\Delta f\|_{L^2} \|f\|_{L^2}, \\
	\left|\rea \int \nabla \chi \cdot \nabla f \Delta \chi \overline{f} dx \right| &\leq \|\nabla \chi\|_{L^\infty} \|\Delta \chi\|_{L^\infty} \|\nabla f\|_{L^2} \|f\|_{L^2} \leq \|\nabla \chi\|_{L^\infty} \|\Delta \chi\|_{L^\infty} \|\Delta f\|^{\frac{1}{2}}_{L^2} \|f\|^{\frac{3}{2}}_{L^2}.
	\end{align*}
	We thus get
	\begin{align*} 
	\int |\Delta(\chi f)|^2 dx \leq \int \chi^2 |\Delta f|^2 & + \left(4 \sum_{k,l} \|\chi\|_{L^\infty} \|\partial^2_{kl} \chi\|_{L^\infty} + 2 \|\nabla \chi\|^2_{L^\infty} + \|\chi\|_{L^\infty} \|\Delta \chi\|_{L^\infty} \right) \|\Delta f\|_{L^2} \|f\|_{L^2} \\
	&+ \|\Delta \chi\|^2_{L^\infty} \|f\|^2_{L^2} + \|\nabla \chi\|_{L^\infty} \|\Delta \chi\|_{L^\infty} \|\Delta f\|^{\frac{1}{2}}_{L^2} \|f\|^{\frac{3}{2}}_{L^2}.
	\end{align*}
	This together with \eqref{est-solu-gwp} imply that
	\begin{align} \label{proper-chi}
	\|\Delta(\chi_R u(t))\|^2_{L^2} = \int \chi^2_R |\Delta u(t)|^2 dx + C(u_0,Q) R^{-2},
	\end{align}
	where we have used the fact that
	\[
	\|\chi_R\|_{L^\infty} \lesssim 1, \quad \|\nabla \chi_R\|_{L^\infty} \lesssim R^{-1}, \quad \|\Delta \chi_R\|_{L^\infty} \lesssim R^{-2}.
	\]
	It follows from \eqref{coer-1} that
	\begin{align*}
	\|\Delta(\chi_R u(t))\|_{L^2} \|\chi_R u(t)\|^{\sigc}_{L^2} &\leq \left(\|\Delta u(t)\|^2_{L^2} + C(u_0,Q) R^{-2}\right)^{\frac{1}{2}} \|u(t)\|^{\sigc}_{L^2} \\
	&\leq \|\Delta u(t)\|_{L^2} \|u(t)\|^{\sigc}_{L^2} + C(u_0,Q) R^{-1} \\
	&\leq (1-2\rho) \|\Delta Q\|_{L^2} \|Q\|^{\sigc}_{L^2} + C(u_0,Q) R^{-1} \\
	&\leq (1-\rho) \|\Delta Q\|_{L^2} \|Q\|^{\sigc}_{L^2}
	\end{align*}
	provided that $R>0$ is taken sufficiently large depending on $u_0$ and $Q$. This proves \eqref{est-solu-gwp-ref}.
	
	The estimate \eqref{coer-2} follows from \eqref{est-solu-gwp-ref} and the following observation: if 
	\begin{align} \label{coer-2-proof-1}
	\|\Delta f\|_{L^2} \|f\|^{\sigc}_{L^2} <(1-\rho) \|\Delta Q\|_{L^2} \|Q\|_{L^2}^{\sigc},
	\end{align} 
	then there exists $\nu=\nu(\rho)>0$ such that
	\[
	K_0(f):=\|\Delta f\|^2_{L^2} -\frac{N\alpha}{4(\alpha+2)} \|f\|^{\alpha+2}_{L^{\alpha+2}} \geq \nu \|f\|^{\alpha+2}_{L^{\alpha+2}}.
	\]
	To see this, we have from the Gagliardo-Nirenberg inequality, \eqref{coer-2-proof-1} and \eqref{opt-con-GN} that
	\begin{align*}
	E_0(f) &=\frac{1}{2} \|\Delta f\|^2_{L^2} - \frac{1}{\alpha+2} \|f\|^{\alpha+2}_{L^{\alpha+2}} \\
	&\geq \frac{1}{2} \|\Delta f\|^2_{L^2} - \frac{C_{\opt}}{\alpha+2} \|\Delta f\|^{\frac{N\alpha}{4}}_{L^2} \|f\|^{\frac{8-(N-4)\alpha}{4}}_{L^2} \\
	&= \frac{1}{2} \|\Delta f\|^2_{L^2} \left( 1- \frac{2 C_{\opt}}{\alpha+2} \|\Delta f\|^{\frac{N\alpha-8}{4}}_{L^2} \|f\|^{\frac{8-(N-4)\alpha}{4}}_{L^2} \right) \\
	&= \frac{1}{2} \|\Delta f\|^2_{L^2} \left( 1- \frac{2 C_{\opt}}{\alpha+2} \left(\|\Delta f\|_{L^2} \|f\|^{\sigc}_{L^2} \right)^{\frac{N\alpha-8}{4}} \right) \\
	&> \frac{1}{2} \|\Delta f\|^2_{L^2} \left( 1- \frac{2C_{\opt}}{\alpha+2} (1-\rho)^{\frac{N\alpha-8}{4}} \left( \|\Delta Q\|_{L^2} \|Q\|^{\sigc}_{L^2} \right)^{\frac{N\alpha-8}{4}} \right) \\
	&= \frac{1}{2} \|\Delta f\|^2_{L^2} \left( 1-\frac{8}{N\alpha} (1-\rho)^{\frac{N\alpha-8}{4}} \right).
	\end{align*}
	It follows that
	\[
	\|\Delta f\|^2_{L^2} >\frac{N\alpha}{4(\alpha+2)} \frac{1}{(1-\rho)^{\frac{N\alpha-8}{4}}} \|f\|^{\alpha+2}_{L^{\alpha+2}}.
	\]
	We thus get
	\begin{align*}
	K_0(f) &= \frac{N\alpha}{4} E_0(f) - \frac{N\alpha-8}{8} \|\Delta f\|^2_{L^2} \\
	&> \frac{N\alpha}{8} \|\Delta f\|^2_{L^2} \left( 1 - \frac{8}{N\alpha} (1-\rho)^{\frac{N\alpha-8}{4}} \right) - \frac{N\alpha -8}{8} \|\Delta f\|^2_{L^2} \\
	&= \left(1- (1-\rho)^{\frac{N\alpha-8}{4}} \right) \|\Delta f\|^2_{L^2} \\
	&> \frac{N\alpha\left[1-(1-\rho)^{\frac{N\alpha-8}{4}} \right] }{4(\alpha+2)(1-\rho)^{\frac{N\alpha-8}{4}}} \|f\|^{\alpha+2}_{L^{\alpha+2}}
	\end{align*}
	which proves the observation. The proof of Lemma \ref{lem-coer-2} is now complete.
\end{proof}

\begin{remark} \label{rem-chi-R}
	It follows directly from H\"older's inequality and the definition of $\chi_R$ that 
	\[
	\|\Delta(\chi_R u(t))\|^2_{L^2} = \int \chi^2_R |\Delta u(t)|^2 dx + C(u_0,Q) R^{-1}.
	\]
	However, we need the refined estimate \eqref{proper-chi} for the later purpose. The decay $R^{-1}$ is not enough to show the space-time estimate \eqref{mora-est}.
\end{remark}

\subsection{Morawetz estimate}
Let us start with the following virial identity.
\begin{lemma}[Virial identity \cite{BL}] \label{lem-virial-iden}
	Let $N\geq 1$, $\mu\geq 0$ and $0<\alpha<\alpha^*$. Let $\varphi: \R^N \rightarrow \R$ be a sufficiently smooth and decaying function. Let $u$ be a $H^2$ solution to  the focusing problem \eqref{4NLS}. Define
	\[
	M_\varphi(t):= 2 \int \nabla \varphi \cdot \ima \left( \overline{u}(t) \nabla u(t)\right) dx.
	\]
	Then we have that
	\begin{align*}
	\frac{d}{dt} M_\varphi(t) &= \int \Delta^3 \varphi |u(t)|^2 dx - 2\int \Delta^2 \varphi |\nabla u(t)|^2 dx + 8 \sum_{k,l,m} \int \partial^2_{lm} \varphi \partial^2_{kl} \overline{u}(t) \partial^2_{mk} u(t) dx \\
	&\mathrel{\phantom{= \int \Delta^3 \varphi |u(t)|^2 dx}} - 4\sum_{k,l} \int \partial^2_{kl} \Delta \varphi \partial_k \overline{u}(t) \partial_l u(t) dx + 4\mu \sum_{k,l} \int \partial^2_{kl}\varphi \partial_k \overline{u}(t) \partial_l u(t) dx \\
	&\mathrel{\phantom{= \int \Delta^3 \varphi |u(t)|^2 dx}}- \mu \int \Delta^2 \varphi |u(t)|^2 dx -\frac{2\alpha}{\alpha+2} \int \Delta \varphi |u(t)|^{\alpha+2} dx.
	\end{align*}
\end{lemma}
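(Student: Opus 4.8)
\emph{Proof plan.} The identity is obtained by a direct computation: differentiate $M_\varphi$ in time, use the equation to remove the time derivatives, and integrate by parts. Writing $M_\varphi(t)=2\sum_j\ima\int\partial_j\varphi\,\overline{u}\,\partial_j u\,dx$ and differentiating,
\[
\frac{d}{dt}M_\varphi(t)=2\sum_j\ima\int\partial_j\varphi\bigl(\partial_t\overline{u}\,\partial_j u+\overline{u}\,\partial_j\partial_t u\bigr)\,dx .
\]
Substituting $i\partial_t u=\Delta^2 u-\mu\Delta u-|u|^\alpha u$ from \eqref{4NLS} (focusing case) and its conjugate, and using $\ima(iz)=\rea z$ and $\ima(-iz)=-\rea z$, the right-hand side splits by linearity into a biharmonic piece coming from $\Delta^2 u$, a Laplacian piece coming from $-\mu\Delta u$, and a nonlinear piece coming from $-|u|^\alpha u$. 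I would treat the three in increasing order of difficulty.

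\textbf{Nonlinear and Laplacian pieces.} The nonlinear piece is built from $\rea\int\partial_j\varphi\,|u|^\alpha\overline{u}\,\partial_j u\,dx$ and $\rea\int\partial_j\varphi\,\overline{u}\,\partial_j(|u|^\alpha u)\,dx$; using $\rea(\overline{u}\,\partial_j u)=\tfrac12\partial_j|u|^2$, $|u|^\alpha\partial_j|u|^2=\tfrac{2}{\alpha+2}\partial_j(|u|^{\alpha+2})$ and $\rea[\overline{u}\,\partial_j(|u|^\alpha u)]=\tfrac{\alpha+1}{2}|u|^\alpha\partial_j|u|^2$, it collapses to $\tfrac{2\alpha}{\alpha+2}\int\nabla\varphi\cdot\nabla(|u|^{\alpha+2})\,dx$, and one integration by parts produces $-\tfrac{2\alpha}{\alpha+2}\int\Delta\varphi\,|u|^{\alpha+2}\,dx$, the last term of the statement. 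The Laplacian piece is the classical Schr\"odinger virial computation: expanding $\Delta\overline u=\sum_k\partial_k^2\overline u$, integrating by parts in each variable, and using the symmetry of $\partial^2_{kl}\varphi$, the vanishing of $\ima(\overline{\partial_k u}\,\partial_k u)$, the mutual cancellation of the two "$\overline{u}\,\partial^2 u$"-type terms, and $\sum_l\int\partial_l(\Delta\varphi)\,\partial_l|u|^2\,dx=-\int\Delta^2\varphi\,|u|^2\,dx$, one finds it equals $4\mu\sum_{k,l}\int\partial^2_{kl}\varphi\,\partial_k\overline{u}\,\partial_l u\,dx-\mu\int\Delta^2\varphi\,|u|^2\,dx$, i.e. the two $\mu$-terms.

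\textbf{Biharmonic piece --- the main obstacle.} Here all four derivatives in $\Delta^2$ must be transferred onto the remaining factors, and this bookkeeping is the heart of the proof. It is convenient to organise it through commutators: with $B_\varphi:=2\nabla\varphi\cdot\nabla+\Delta\varphi$ (skew-adjoint on $L^2$) one has $M_\varphi(t)=-i\int\overline{u}\,B_\varphi u\,dx$, hence $\frac{d}{dt}M_\varphi=\int\overline{u}\,[\Delta^2-\mu\Delta,B_\varphi]\,u\,dx+(\text{nonlinear term})$, and
\[
[\Delta^2,B_\varphi]=\Delta[\Delta,B_\varphi]+[\Delta,B_\varphi]\Delta,\qquad [\Delta,B_\varphi]f=4\nabla(\Delta\varphi)\cdot\nabla f+4\sum_{k,l}\partial^2_{kl}\varphi\,\partial^2_{kl}f+\Delta^2\varphi\,f .
\]
Since $P:=[\Delta,B_\varphi]$ is self-adjoint with real coefficients, the biharmonic form equals $2\rea\int\Delta\overline{u}\cdot Pu\,dx$; expanding it and integrating by parts until every surviving term is symmetric in $u$ and $\overline{u}$, the $\Delta^2\varphi\,f$ part yields $\int\Delta^3\varphi\,|u|^2\,dx-2\int\Delta^2\varphi\,|\nabla u|^2\,dx$, while the $\nabla(\Delta\varphi)\cdot\nabla f$ and $\sum\partial^2_{kl}\varphi\,\partial^2_{kl}f$ parts jointly yield the genuinely new fourth-order term $8\sum_{k,l,m}\int\partial^2_{lm}\varphi\,\partial^2_{kl}\overline{u}\,\partial^2_{mk}u\,dx$ together with $-4\sum_{k,l}\int\partial^2_{kl}\Delta\varphi\,\partial_k\overline{u}\,\partial_l u\,dx$. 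All the difficulty is concentrated here: many integrations by parts, keeping the combinatorial constants ($8$ and $-4$) straight, and checking that the numerous \emph{a priori} purely imaginary remainder terms cancel --- which they must, since $M_\varphi(t)\in\R$; this last fact is a convenient running consistency check. The special case $\varphi(x)=|x|^2$, for which the right-hand side must reduce to the virial identity of \cite{BL}, is a useful further sanity check.

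\textbf{Regularity.} The manipulations above require more smoothness and spatial decay than $u\in C(I,H^2)$ provides, so one first proves the identity for smooth, rapidly decaying solutions (obtained by regularising the initial data) and then passes to the limit via the local well-posedness theory of Section \ref{S3}; every term on the right-hand side is continuous on $H^2$ when $\alpha<\alpha^*$ (so that $H^2\hookrightarrow L^{\alpha+2}$), which legitimises the limiting argument. I would only indicate this step briefly.
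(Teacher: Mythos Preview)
Your proposal is correct and follows essentially the same commutator approach as the paper: the paper also writes $M_\varphi(t)=\langle u,\Gamma_\varphi u\rangle$ with $\Gamma_\varphi=-i(\nabla\varphi\cdot\nabla+\nabla\cdot\nabla\varphi)$ (so $i\Gamma_\varphi=B_\varphi$ in your notation), splits $\frac{d}{dt}M_\varphi$ into the three commutators $[\Delta^2,i\Gamma_\varphi]$, $[-\mu\Delta,i\Gamma_\varphi]$, $[-|u|^\alpha,i\Gamma_\varphi]$, and computes $[\Delta,i\Gamma_\varphi]=4\sum_{l,m}\partial_l(\partial^2_{lm}\varphi)\partial_m+\Delta^2\varphi$ exactly as you do. The only cosmetic difference is that for the biharmonic piece the paper invokes the operator identity $\Delta A+A\Delta=2\sum_k\partial_k A\partial_k+\sum_k[\partial_k,[\partial_k,A]]$ to put $[\Delta^2,i\Gamma_\varphi]$ directly into divergence form, whereas you pass to the bilinear form $2\rea\int\Delta\overline{u}\,Pu\,dx$ and integrate by parts from there; these are equivalent.
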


\begin{remark} 
	In the case $\varphi(x) =|x|^2$, we have
	\[
	\frac{d}{dt} M_{|x|^2} (t) = 16 K_\mu(u(t)),
	\]
	where
	\begin{align} \label{defi-K-mu}
	\begin{aligned}
	K_\mu(u):&=\|\Delta u\|^2_{L^2} + \frac{\mu}{2} \|\nabla u\|^2_{L^2} -\frac{N\alpha}{4(\alpha+2)} \|u\|^{\alpha+2}_{L^{\alpha+2}} \\
	&= \frac{N\alpha}{4} E_\mu(u) - \frac{N\alpha -8}{8} \|\Delta u\|^2_{L^2} - \frac{(N\alpha-4)\mu}{8} \|\nabla u\|^2_{L^2}.
	\end{aligned}
	\end{align}
\end{remark}

\noindent {\it Proof of Lemma $\ref{lem-virial-iden}$.}
	The proof is essentially given in \cite[Lemma 3.1]{BL}. For the reader's convenience, we provide some details. We write
	\[
	M_\varphi(t) = \scal{u(t), \Gamma_\varphi u(t)},
	\]
	where
	\[
	\Gamma_\varphi:= -i (\nabla \varphi \cdot \nabla + \nabla \cdot \nabla \varphi).
	\]
	Note that if $u$ solves $i\partial_t u = H u$, then
	\[
	\frac{d}{dt} \scal{u, Au} = i \scal{u, [H,A]u} = \scal{u, [H, iA] u},
	\]
	where $[H,A]= HA - AH$ is the commutator operator. Applying the above fact with 
	\[
	H= \Delta^2 - \mu \Delta - |u|^\alpha,
	\]
	we get
	\[
	\frac{d}{dt} M_\varphi(t) = \scal{u, [\Delta^2, i \Gamma_\varphi] u} + \scal{u, [-\mu \Delta, i\Gamma_\varphi] u} + \scal{u, [-|u|^\alpha, i\Gamma_\varphi] u} =: \text{I} + \text{II} +\text{III}.
	\]
	\underline{Compute I.} We have
	\begin{align*}
	[\Delta^2, i\Gamma_\varphi] &= \Delta [\Delta, i\Gamma_\varphi] + [\Delta, i \Gamma_\varphi] \Delta \\
	&= \sum_{k} 2\partial_k [\Delta, i\Gamma_\varphi] \partial_k + [\partial_k, [\partial_k, [\Delta, i\Gamma_\varphi]]],
	\end{align*}
	where we have used the fact that
	\[
	\Delta A + A \Delta = \sum_k 2 \partial_k A \partial_k + [\partial_k, [\partial_k,A]]
	\]
	for an operator $A$. We also have
	\begin{align} \label{iden-varphi}
	[\Delta, i\Gamma_\varphi] = [\Delta, \nabla \varphi \cdot \nabla + \nabla \cdot \nabla \varphi] = 4 \sum_{l,m} \partial_l (\partial^2_{lm} \varphi) \partial_m + \Delta^2 \varphi.
	\end{align}
	It follows that
	\[
	[\Delta^2, i\Gamma_\varphi] = 8 \sum_{k,l,m} \partial^2_{kl} (\partial^2_{lm} \varphi) \partial^2_{mk} + 4\sum_{k,l} \partial_k (\partial^2_kl \Delta \varphi) \partial_l + 2\sum_{k,l} \partial_k (\Delta^2 \varphi) \partial_l + \Delta^3 \varphi
	\]
	hence
	\begin{align*}
	\text{I} &= \scal{u, [\Delta^2, i\Gamma_\varphi] u} \\
	&= 8 \sum_{k,l,m} \int \partial^2_{lm} \varphi \partial^2_{kl} \overline{u} \partial^2_{mk} u dx - 4 \sum_{k,l} \int \partial^2_{kl}\Delta \varphi \partial_k \overline{u} \partial_l u dx - 2\int \Delta^2 \varphi |\nabla u|^2 dx + \int \Delta^3\varphi |u|^2 dx.
	\end{align*}
	\underline{Compute II.} Using \eqref{iden-varphi}, we have
	\[
	\text{II} = \scal{u, [-\mu\Delta, i\Gamma_\varphi]u} = 4\mu \sum_{k,l} \int \partial^2_{kl}\varphi \partial_k \overline{u} \partial_l u dx - \mu \int \Delta^2 \varphi |u|^2 dx.
	\]
	\underline{Compute III.} We have
	\begin{align*}
	[-|u|^\alpha, i\Gamma_\varphi] u &= - [|u|^\alpha, \nabla \varphi \cdot \nabla + \nabla \cdot \nabla \varphi] u \\
	&= - \left(|u|^\alpha (\nabla \varphi \cdot \nabla u + \nabla \cdot (\nabla \varphi u)) - \nabla \varphi \cdot \nabla (|u|^\alpha u) - \nabla \cdot \left( \nabla \varphi |u|^\alpha u\right) \right) \\
	&= 2 \nabla \varphi \cdot \nabla (|u|^\alpha )u.
	\end{align*}
	It follows that
	\begin{align*}
	\text{III} &= \scal{u,[-|u|^\alpha, i\Gamma_\varphi]u} = 2 \int \nabla \varphi \cdot \nabla (|u|^\alpha) |u|^2 dx = -\frac{2\alpha}{\alpha+2} \int \Delta \varphi |u|^{\alpha+2} dx.
	\end{align*}
	Collecting the identities for I, II, and III, we complete the proof.
\hfill $\Box$

\vspace{3mm}

Let $\zeta: [0,\infty) \rightarrow [0,2]$ be a smooth function satisfying
\begin{align*} 
\zeta(r) = \left\{
\begin{array}{ccl}
2 & \text{if} & 0\leq r \leq 1, \\
0 &\text{if} & r\geq 2.
\end{array}
\right.
\end{align*}
We define the function $\theta: [0,\infty) \rightarrow [0,\infty)$ by
\[
\theta(r):= \int_0^r \int_0^s \zeta(z)dz ds.
\]
Given $R>0$, we define a radial function
\begin{align} \label{def-varphi-R}
\varphi_R(x) = \varphi_R(r):= R^2 \theta(r/R), \quad r=|x|.
\end{align}
We readily check that
\begin{align} \label{proper-varphi-R}
2 \geq \varphi''_R(r) \geq 0, \quad 2-\frac{\varphi'(r)}{r} \geq 0, \quad 2N-\Delta \varphi_R(x) \geq 0, \quad \forall r \geq 0, \quad \forall x \in \R^N.
\end{align}

\begin{proposition} \label{prop-mora-est}
	Let $N\geq 2$, $\mu\geq 0$ and $\frac{8}{N}<\alpha<\alpha^*$. Let $u_0\in H^2$ be radially symmetric satisfying \eqref{cond-ener} and \eqref{cond-grad-gwp}. Then, for any time interval $I \subset \R$, the corresponding solution to  the focusing problem \eqref{4NLS} satisfies 
	\begin{align} \label{mora-est}
	\int_I \|u(t)\|^{\alpha+2}_{L^{\alpha+2}} dt \leq C(u_0,Q) |I|^{\frac{1}{3}}
	\end{align}
	for some constant $C(u_0,Q)$ depending only on $u_0$ and $Q$.
\end{proposition}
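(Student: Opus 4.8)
The plan is to run a truncated virial/Morawetz argument with the weight $\varphi_R$ of \eqref{def-varphi-R}: one feeds $\varphi_R$ into the virial identity of Lemma \ref{lem-virial-iden}, uses the localized coercivity of Lemma \ref{lem-coer-2} to convert the main terms into a positive multiple of $\|\chi_R u(t)\|_{L^{\alpha+2}}^{\alpha+2}$, controls every error term by $R^{-2}$ (with the help of the radial Sobolev embedding), and finally optimizes the free parameter $R$ against $|I|$.

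\textbf{Step 1 (global bound and a priori control of $M_{\varphi_R}$).} Under \eqref{cond-ener}--\eqref{cond-grad-gwp}, Lemma \ref{lem-coer-1} gives global existence together with $\sup_{t\in\R}\|u(t)\|_{H^2}\le C(u_0,Q)=:A$. Since $\|\nabla\varphi_R\|_{L^\infty}\lesssim R$, Cauchy--Schwarz yields $|M_{\varphi_R}(t)|\lesssim R\,\|u(t)\|_{L^2}\|\nabla u(t)\|_{L^2}\le C(u_0,Q)\,R$ for every $t$.

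\textbf{Step 2 (coercive differential inequality).} Applying Lemma \ref{lem-virial-iden} with $\varphi=\varphi_R$ and using that $\varphi_R(x)=|x|^2$ on $\{|x|\le R\}$ while $\|\partial^\beta\varphi_R\|_{L^\infty}\lesssim R^{2-|\beta|}$ with $\partial^\beta\varphi_R$ ($|\beta|\ge 3$) and $2\delta_{lm}-\partial^2_{lm}\varphi_R$ supported in $\{R\le|x|\le 2R\}$, one finds: the terms carrying $\Delta^3\varphi_R$, $\Delta^2\varphi_R$, $\partial^2_{kl}\Delta\varphi_R$ and $\mu\Delta^2\varphi_R$ are $O(R^{-2})$ by the uniform $H^2$ bound; the term $4\mu\sum_{k,l}\int\partial^2_{kl}\varphi_R\,\partial_k\overline{u}\,\partial_l u\,dx$ is $\ge 0$ since the Hessian of $\varphi_R$ is positive semidefinite by \eqref{proper-varphi-R}, and may be dropped; for the biharmonic term the same semidefiniteness gives
\[
8\sum_{k,l,m}\int\partial^2_{lm}\varphi_R\,\partial^2_{kl}\overline{u}\,\partial^2_{mk}u\,dx\ \ge\ 16\sum_{k,l}\int\chi_R^2\,|\partial^2_{kl}u|^2\,dx,
\]
which, after the integration-by-parts bookkeeping that already produced the refined decay in \eqref{proper-chi} (the naive estimate only gives $R^{-1}$), is $\ge 16\|\Delta(\chi_R u)\|_{L^2}^2-C(u_0,Q)R^{-2}$; and since $0\le\Delta\varphi_R\le 2N$ and $|u|^{\alpha+2}=\chi_R^{\alpha+2}|u|^{\alpha+2}+(1-\chi_R^{\alpha+2})|u|^{\alpha+2}$ with the second piece supported in $\{|x|\ge R/2\}$, the nonlinear term is $\ge-\frac{4N\alpha}{\alpha+2}\|\chi_R u\|_{L^{\alpha+2}}^{\alpha+2}-\frac{4N\alpha}{\alpha+2}\int_{|x|\ge R/2}|u|^{\alpha+2}\,dx$. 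Collecting,
\[
\frac{d}{dt}M_{\varphi_R}(t)\ \ge\ 16\,K_0(\chi_R u(t))-C(u_0,Q)\Big(R^{-2}+\int_{|x|\ge R/2}|u(t)|^{\alpha+2}\,dx\Big),
\]
with $K_0$ as in \eqref{coer-2}.

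\textbf{Step 3 (radial Sobolev tail and conclusion).} By the radial Sobolev embedding (Lemma \ref{lem-est-mora-rad}) and the uniform $H^2$ bound, $\int_{|x|\ge R/2}|u(t)|^{\alpha+2}\,dx\le C(u_0,Q)R^{-\frac{(N-1)\alpha}{2}}$, and since $N\ge 2$ forces $\alpha>\frac 8N\ge\frac{4}{N-1}$, one has $\frac{(N-1)\alpha}{2}>2$, so this tail is $\le C(u_0,Q)R^{-2}$; the same bound controls $\|u(t)\|_{L^{\alpha+2}}^{\alpha+2}-\|\chi_R u(t)\|_{L^{\alpha+2}}^{\alpha+2}$. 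Using the coercivity \eqref{coer-2}, $K_0(\chi_R u(t))\ge\nu\|\chi_R u(t)\|_{L^{\alpha+2}}^{\alpha+2}$ for $R\ge R_0$, we obtain for all $R\ge R_0$
\[
\frac{d}{dt}M_{\varphi_R}(t)\ \ge\ 16\nu\,\|u(t)\|_{L^{\alpha+2}}^{\alpha+2}-C(u_0,Q)R^{-2}.
\]
Integrating over a bounded $I$ (the unbounded case being vacuous) and invoking Step 1, $16\nu\int_I\|u(t)\|_{L^{\alpha+2}}^{\alpha+2}\,dt\le C(u_0,Q)\big(R+R^{-2}|I|\big)$. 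Taking $R=\max\{R_0,|I|^{1/3}\}$ gives the claim $\int_I\|u(t)\|_{L^{\alpha+2}}^{\alpha+2}\,dt\le C(u_0,Q)|I|^{1/3}$ when $|I|\ge R_0^3$; when $|I|<R_0^3$ one simply bounds $\int_I\|u(t)\|_{L^{\alpha+2}}^{\alpha+2}\,dt\le|I|\sup_t\|u(t)\|_{L^{\alpha+2}}^{\alpha+2}\le C(u_0,Q)\,|I|\le C(u_0,Q)R_0^2\,|I|^{1/3}$.

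\textbf{Expected main obstacle.} The delicate point is handling the biharmonic contribution to the virial: one must simultaneously exploit positive semidefiniteness of the Hessian of $\varphi_R$ and match the localized Hessian term with $\|\Delta(\chi_R u)\|_{L^2}^2$ up to an error of order $R^{-2}$ rather than $R^{-1}$, which is exactly the refined integration by parts behind \eqref{proper-chi}; everything else reduces to bookkeeping of the annular error terms and to the elementary inequality $\alpha>\frac{4}{N-1}$ (valid since $N\ge 2$), which guarantees that the radial-Sobolev tails decay faster than $R^{-2}$ and hence do not affect the optimization.
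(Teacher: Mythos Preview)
Your proposal is correct and follows essentially the same strategy as the paper: truncated virial with $\varphi_R$, coercivity from Lemma~\ref{lem-coer-2}, radial Sobolev tails, and optimization $R\sim|I|^{1/3}$.

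There is one small but genuine difference worth flagging. For the biharmonic term, the paper exploits radiality directly: for radial $u$ one has $\sum_{k,l,m}\partial^2_{lm}\varphi_R\,\partial^2_{kl}\bar u\,\partial^2_{mk}u=\varphi_R''|\partial_r^2u|^2+\tfrac{N-1}{r^3}\varphi_R'|\partial_ru|^2$, which on $\{|x|\le R\}$ dominates $2|\Delta u|^2$ up to a nonnegative boundary contribution. You instead use general Hessian positivity to get $16\int\chi_R^2|D^2u|^2$. This is fine, but note that \eqref{proper-chi} only gives $\|\Delta(\chi_Ru)\|_{L^2}^2=\int\chi_R^2|\Delta u|^2+O(R^{-2})$; you still need $\int\chi_R^2|D^2u|^2=\int\chi_R^2|\Delta u|^2+O(R^{-2})$. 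This does hold (two integrations by parts push all remaining terms onto $\partial^2(\chi_R^2)$, hence $O(R^{-2}\|\nabla u\|_{L^2}^2)$), but it is an additional identity not literally contained in the proof of \eqref{proper-chi}, so you should spell it out. Your route has the mild advantage of not invoking the radial Hessian formula, though radiality is still essential for the $L^{\alpha+2}$ tail.

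Two cosmetic points: the radial Sobolev embedding you cite is \eqref{rad-sobo-emb}, not Lemma~\ref{lem-est-mora-rad}; and the ``unbounded $I$'' case is not vacuous in the statement as written, but since $C|I|^{1/3}=\infty$ there it is indeed trivial.
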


\begin{proof}
	Let $\varphi_R$ be as in \eqref{def-varphi-R}. By the Cauchy-Schwarz inequality, the conservation of mass and \eqref{est-solu-gwp}, we see that
	\begin{align} \label{mora-est-proof-1}
	|M_{\varphi_R}(t)| \leq \|\nabla \varphi_R\|_{L^\infty} \|u(t)\|_{L^2} \|\nabla u(t)\|_{L^2} \leq \|\nabla \varphi_R\|_{L^\infty} \|u(t)\|^{\frac{3}{2}}_{L^2} \|\Delta u(t)\|^{\frac{1}{2}}_{L^2} \lesssim R
	\end{align}
	for all $t\in \R$, where the implicit constant depends only on $u_0$ and $Q$. By Lemma $\ref{lem-virial-iden}$ and the fact that $\varphi_R(x)=|x|^2$ for $|x| \leq R$, we have
	\begin{align*}
	\frac{d}{dt} M_{\varphi_R}(t) &= \int \Delta^3 \varphi_R |u(t)|^2 dx - 2\int \Delta^2 \varphi_R |\nabla u(t)|^2 dx + 8 \sum_{k,l,m} \int \partial^2_{lm} \varphi_R \partial^2_{kl} \overline{u}(t) \partial^2_{mk} u(t) dx \\
	&\mathrel{\phantom{= \int \Delta^3 \varphi_R |u(t)|^2 dx}} - 4\sum_{k,l} \int \partial^2_{kl} \Delta \varphi_R \partial_k \overline{u}(t) \partial_l u(t) dx + 4\mu \sum_{k,l} \int \partial^2_{kl}\varphi_R \partial_k \overline{u}(t) \partial_l u(t) dx \\
	&\mathrel{\phantom{= \int \Delta^3 \varphi_R |u(t)|^2 dx}}- \mu \int \Delta^2 \varphi_R |u(t)|^2 dx -\frac{2\alpha}{\alpha+2} \int \Delta \varphi_R |u(t)|^{\alpha+2} dx \\
	&= 16 \int_{|x| \leq R} |\Delta u(t)|^2 dx -\frac{4N\alpha}{\alpha+2} \int_{|x| \leq R} |u(t)|^{\alpha+2} dx \\
	&\mathrel{\phantom{=}} + \int \Delta^3 \varphi_R |u(t)|^2 dx - 2 \int \Delta^2\varphi_R |\nabla u(t)|^2 dx + 8 \sum_{k,l,m} \int_{|x|>R} \partial^2_{lm} \varphi_R \partial^2_{kl} \overline{u}(t) \partial^2_{mk} u(t) dx \\
	&\mathrel{\phantom{=}} - 4\sum_{k,l} \int \partial^2_{kl} \Delta \varphi_R \partial_k \overline{u}(t) \partial_l u(t) dx + 4\mu \sum_{k,l} \int \partial^2_{kl} \varphi_R \partial_k \overline{u}(t) \partial_l u(t) dx \\
	&\mathrel{\phantom{=}}- \mu \int \Delta^2\varphi_R |u(t)|^2 dx - \frac{2\alpha}{\alpha+2} \int_{|x|>R} \Delta \varphi_R |u(t)|^{\alpha+2} dx.
	\end{align*}
	Using \eqref{est-solu-gwp} and H\"older's inequality, we have
	\begin{align*}
	\left| \int \Delta^3 \varphi_R |u(t)|^2 dx \right| &\lesssim \|\Delta^3 \varphi_R\|_{L^\infty} \|u(t)\|^2_{L^2} \lesssim R^{-4}, \\
	\left| \int \Delta^2 \varphi_R |\nabla u(t)|^2 dx \right| &\lesssim \|\Delta^2\varphi_R\|_{L^\infty} \|\nabla u(t)\|^2_{L^2} \lesssim \|\Delta^2\varphi_R\|_{L^\infty} \|\Delta u(t)\|_{L^2} \|u(t)\|_{L^2} \lesssim R^{-2},
	\end{align*}
	and
	\begin{align*}
	\left| \int \Delta^2 \varphi_R |u(t)|^2 dx \right| &\lesssim \|\Delta^2 \varphi_R\|_{L^\infty} \|u(t)\|^2_{L^2} \lesssim R^{-2}, \\
	\left| \int \partial^2_{kl} \Delta \varphi_R \partial_k \overline{u}(t) \partial_l u(t) dx \right| &\lesssim \|\partial^2_{kl}\Delta\varphi_R\|_{L^\infty} \|\partial_k u(t)\|_{L^2} \|\partial_l u(t)\|_{L^2} \lesssim R^{-2}.
	\end{align*}
	Since $u$ is radial, we use the fact that
	\[
	\partial^2_{jk} = \left(\frac{\delta_{jk}}{r} - \frac{x_j x_k}{r^3} \right) \partial_r + \frac{x_j x_k}{r^2} \partial^2_r
	\]
	and \eqref{proper-varphi-R} to get
	\begin{align*}
	\sum_{k,l,m} \partial^2_{lm} \varphi_R  \partial^2_{kl} \overline{u} \partial^2_{mk} u &= \varphi''_R |\partial^2_r u|^2  + \frac{N-1}{r^3} \varphi'_R |\partial_r u|^2 \\
	& \geq \frac{N-1}{r^3} \varphi'_R |\partial_r u|^2. 
	\end{align*}
	It follows that
	\[
	\sum_{k,l,m} \int_{|x|>R} \partial^2_{lm} \varphi_R \partial^2_{kl} \overline{u} \partial^2_{mk} u dx  \geq \int_{R\leq |x| \leq 2R} \frac{N-1}{r^2} \frac{\varphi'_R}{r} |\partial_r u|^2 dx.
	\]
	We also have
	\[
	\sum_{k,l} \partial^2_{kl} \varphi_R \partial_k \overline{u} \partial_l u = \varphi''_R |\partial_r u|^2 \geq 0.
	\]
	We thus get
	\begin{align*}
	\frac{d}{dt} M_{\varphi_R}(t) &\geq 16 \left( \int_{|x| \leq R} |\Delta u(t)|^2 dx - \frac{N\alpha}{4(\alpha+2)} \int_{|x| \leq R} |u(t)|^{\alpha+2} dx \right) \\
	&\mathrel{\phantom{\geq}} + 8 \int_{|x|>R} \frac{N-1}{r^2} \frac{\varphi'_R}{r} |\nabla u(t)|^2 dx - \frac{2\alpha}{\alpha+2} \int_{|x|>R} \Delta \varphi_R |u(t)|^{\alpha+2} dx + O(R^{-2}+R^{-4}).
	\end{align*}
	By \eqref{est-solu-gwp} and \eqref{proper-varphi-R}, we have
	\[
	\int_{|x|>R} \frac{N-1}{r^2} \frac{\varphi'_R}{r} |\nabla u(t)|^2 dx \lesssim R^{-2} \|\nabla u(t)\|^2_{L^2} \lesssim R^{-2} \|\Delta u(t)\|_{L^2} \|u(t)\|_{L^2} \lesssim R^{-2}.
	\]
	Using the fact that $\|\Delta \varphi_R\|_{L^\infty} \lesssim 1$ and the radial Sobolev embedding (see \cite{Strauss}): for $N\geq 2$,
	\begin{align} \label{rad-sobo-emb}
	\sup_{x\ne 0} |x|^{\frac{N-1}{2}} |f(x)| \leq C(N) \|\nabla f\|^{\frac{1}{2}}_{L^2} \|f\|^{\frac{1}{2}}_{L^2}, \quad \forall f\in H^1_{\rad},
	\end{align}
	we have
	\begin{align*}
	\int_{|x|>R} \Delta \varphi_R |u(t)|^{\alpha+2} dx &\lesssim \|u(t)\|^\alpha_{L^\infty(|x|>R)} \|u(t)\|^2_{L^2} \\
	&\lesssim R^{-\frac{(N-1)\alpha}{2}} \|\nabla u(t)\|^{\frac{\alpha}{2}}_{L^2} \|u(t)\|^{2+\frac{\alpha}{2}}_{L^2} \\
	&\lesssim R^{-\frac{(N-1)\alpha}{2}} \|\Delta u(t)\|^{\frac{\alpha}{4}}_{L^2} \|u(t)\|^{2+\frac{3\alpha}{4}}_{L^2} \\
	&\lesssim R^{-\frac{(N-1)\alpha}{2}} \lesssim R^{-2}, 
	\end{align*}
	where, in the last estimate, we have used the fact that $(N-1)\alpha >4$ as $\alpha>\frac{8}{N}$. 
	We thus obtain
	\begin{align} \label{mora-est-proof-2}
	\frac{d}{dt}M_{\varphi_R}(t) \geq 16 \left(\int_{|x| \leq R} |\Delta u(t)|^2 dx - \frac{N\alpha}{4(\alpha+2)} \int_{|x| \leq R} |u(t)|^{\alpha+2} dx \right) + O\left( R^{-2} \right)
	\end{align}
	for all $t \in \R$. On the other hand, we have from \eqref{proper-chi} that
	\begin{align*}
	\int |\Delta(\chi_R u)|^2 dx &= \int \chi^2_R |\Delta u|^2 dx +  O\left(R^{-2}\right) \\
	&= \int_{|x| \leq R} |\Delta u|^2 dx - \int_{R/2 \leq |x| \leq R} (1-\chi^2_R) |\Delta u|^2 dx + O\left(R^{-2}\right).
	\end{align*}
	We also have
	\begin{align*}
	\int|\chi_R u|^{\alpha+2} dx = \int_{|x| \leq R} |u|^{\alpha+2} - \int_{R/2 \leq |x| \leq R} (1-\chi^{\alpha+2}_R) |u|^{\alpha+2}dx.
	\end{align*}
	This implies that
	\begin{align*}
	\int_{|x| \leq R} |\Delta u|^2 dx &- \frac{N\alpha}{4(\alpha+2)} \int_{|x| \leq R} |u|^{\alpha+2} dx \\
	&= \int |\Delta(\chi_R u)|^2 dx -\frac{N\alpha}{4(\alpha+2)} \int |\chi_R u|^{\alpha+2} dx + \int (1-\chi^2_R) |\Delta u|^2 dx \\
	&\mathrel{\phantom{=}} -\int_{R/2 \leq |x| \leq R} (1-\chi^{\alpha+2}_R) |u|^{\alpha+2} + O\left(R^{-2}\right).
	\end{align*}
	The radial Sobolev embedding \eqref{rad-sobo-emb} together with $0\leq \chi_R \leq 1$ imply 
	\[
	\int_{|x| \leq R} |\Delta u|^2 dx - \frac{N\alpha}{4(\alpha+2)} \int_{|x| \leq R} |u|^{\alpha+2} dx \geq \|\Delta(\chi_R u)\|^2_{L^2} - \frac{N\alpha}{4(\alpha+2)} \|\chi_R u\|^{\alpha+2}_{L^{\alpha+2}} + O \left( R^{-2}\right).
	\]
	We thus get from \eqref{mora-est-proof-2} that
	\[
	\frac{d}{dt} M_{\varphi_R}(t) \geq 16 \left(\|\Delta(\chi_R u(t))\|^2_{L^2} - \frac{N\alpha}{4(\alpha+2)} \|\chi_R u(t)\|^{\alpha+2}_{L^{\alpha+2}} \right) + O  \left( R^{-2} \right).
	\]
	By Lemma $\ref{lem-coer-2}$, there exist $R_0=R_0(u_0,Q)>0$ and $\nu = \nu(u_0,Q)>0$ such that for any $R\geq R_0$,
	\[
	16 \nu \|\chi_R u(t)\|^{\alpha+2}_{L^{\alpha+2}} \leq \frac{d}{dt} M_{\varphi_R}(t) + O \left( R^{-2}\right)
	\]
	for all $t\in \R$. Taking the integration in time, we have for any $I \subset \R$,
	\[
	\int_I \|\chi_R u(t)\|^{\alpha+2}_{L^{\alpha+2}} dt \lesssim \sup_{t\in I} |M_{\varphi_R}(t)| + O \left( R^{-2}\right)|I|.
	\]
	By the definition of $\chi_R$ and \eqref{mora-est-proof-1}, we get
	\[
	\int_I \int_{|x| \leq R/2} |u(t,x)|^{\alpha+2} dx dt \lesssim R +  R^{-2} |I|.
	\]
	On the other hand, by the radial Sobolev embedding,
	\[
	\int_{|x| \geq R/2} |u(t,x)|^{\alpha+2} dx \leq \left(\sup_{|x|\geq R/2} |u(t,x)|^{\alpha}\right) \|u(t)\|^2_{L^2} \lesssim R^{-\frac{(N-1)\alpha}{2}} \lesssim R^{-2}.
	\]
	This shows that
	\[
	\int_I \|u(t)\|^{\alpha+2}_{L^{\alpha+2}} dt \lesssim R +   R^{-2} |I|.
	\]
	Taking $R= |I|^{\frac{1}{3}}$, we get for $|I|$ sufficiently large,
	\[
	\int_I \|u(t)\|^{\alpha+2}_{L^{\alpha+2}} dt \lesssim |I|^{\frac{1}{3}}.
	\]
	For $|I|$ sufficiently small, we simply use the Sobolev embedding and \eqref{est-solu-gwp} to have
	\[
	\int_I \|u(t)\|^{\alpha+2}_{L^{\alpha+2}} dt \lesssim \int_I \|u(t)\|^{\alpha+2}_{H^2} dt \lesssim |I| \lesssim |I|^{\frac{1}{3}}
	\]
	which completes the proof.
\end{proof}

\begin{corollary} \label{coro-mora-est}
	Let $N\geq 2$, $\mu\geq 0$ and $\frac{8}{N}<\alpha<\alpha^*$. Let $u_0 \in H^2$ be radially symmetric and satisfy \eqref{cond-ener} and \eqref{cond-grad-gwp}. Then there exists $t_n \rightarrow \infty$ such that the corresponding global solution to  the focusing problem \eqref{4NLS} satisfies for any $R>0$,
	\begin{align} \label{small-L2}
	\lim_{n\rightarrow \infty} \int_{|x| \leq R} |u(t_n,x)|^2 dx =0.
	\end{align}
\end{corollary}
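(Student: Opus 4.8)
The plan is to combine the Morawetz-type space-time bound \eqref{mora-est} with a simple averaging argument. First I would apply Proposition \ref{prop-mora-est} (which applies since $u_0$ satisfies \eqref{cond-ener} and \eqref{cond-grad-gwp}, so in particular the solution is global by Lemma \ref{lem-coer-1}) on the intervals $I_n := [n,2n]$, $n \in \N$, obtaining
\[
\int_n^{2n} \|u(t)\|^{\alpha+2}_{L^{\alpha+2}} dt \leq C(u_0,Q) n^{\frac{1}{3}}.
\]
Since $t \mapsto \|u(t)\|^{\alpha+2}_{L^{\alpha+2}}$ is nonnegative and continuous (recall $u \in C(\R,H^2)$), dividing by $|I_n| = n$ shows that there exists $t_n \in [n,2n]$ with
\[
\|u(t_n)\|^{\alpha+2}_{L^{\alpha+2}} \leq C(u_0,Q) n^{-\frac{2}{3}}.
\]
In particular $t_n \to \infty$ and $\|u(t_n)\|_{L^{\alpha+2}} \to 0$ as $n \to \infty$.

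Next, I would fix $R>0$ and localize to the ball $B_R := \{|x| \leq R\}$ via H\"older's inequality with exponents $\frac{\alpha+2}{2}$ and $\frac{\alpha+2}{\alpha}$:
\[
\int_{|x| \leq R} |u(t_n,x)|^2 dx \leq \|u(t_n)\|^2_{L^{\alpha+2}(B_R)} |B_R|^{\frac{\alpha}{\alpha+2}} \leq C(N) R^{\frac{N\alpha}{\alpha+2}} \|u(t_n)\|^2_{L^{\alpha+2}}.
\]
Letting $n \to \infty$ and using the decay of $\|u(t_n)\|_{L^{\alpha+2}}$ established above yields \eqref{small-L2} for this $R$. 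Since the extracted sequence $(t_n)$ does not depend on $R$, the conclusion holds for every $R>0$, which completes the proof.

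There is essentially no serious obstacle here: the only point worth noting is that a single sequence $(t_n)$ serves all radii simultaneously, which is automatic because the quantity we control along $(t_n)$ is the full norm $\|u(t_n)\|_{L^{\alpha+2}}$ rather than a truncated one, with the radius entering only through the finite volume factor $|B_R|^{\alpha/(\alpha+2)}$. The exponent $\tfrac13$ in \eqref{mora-est} is immaterial for this step; any bound of the form $o(|I|)$ would suffice to run the averaging argument.
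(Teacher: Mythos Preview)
Your proof is correct and follows essentially the same approach as the paper: both extract a sequence $t_n\to\infty$ along which $\|u(t_n)\|_{L^{\alpha+2}}\to 0$ from the sublinear space-time bound \eqref{mora-est}, and then localize via H\"older's inequality with exponents $\frac{\alpha+2}{2}$ and $\frac{\alpha+2}{\alpha}$. The only cosmetic difference is that the paper phrases the first step as $\liminf_{t\to\infty}\|u(t)\|_{L^{\alpha+2}}=0$ via a contradiction argument, whereas you give a direct averaging argument on the intervals $[n,2n]$ that also yields the quantitative rate $\|u(t_n)\|_{L^{\alpha+2}}^{\alpha+2}\lesssim n^{-2/3}$.
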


\begin{proof}
	We first claim that 
	\[
	\liminf_{t\rightarrow \infty} \|u(t)\|_{L^{\alpha+2}} =0.
	\]
	In fact, assume that it is not true. Then there exist $t_0>0$ and $\varrho >0$ such that
	\[
	\|u(t)\|_{L^{\alpha+2}} \geq \varrho
	\]
	for all $t\geq t_0$. Taking $I\subset [t_0,\infty)$, we have
	\[
	\int_I \|u(t)\|^{\alpha+2}_{L^{\alpha+2}} dt \geq \varrho^{\alpha+2} |I|.
	\]
	This however contradicts \eqref{mora-est} for $|I|$ sufficiently large, and the claim is proved. 
	
	Thus, there exists $t_n \rightarrow \infty$ such that $\lim_{n\rightarrow \infty} \|u(t_n)\|_{L^{\alpha+2}} =0$. Now let $R>0$. By H\"older's inequality,
	\begin{align*}
	\int_{|x|\leq R} |u(t_n,x)|^2 dx &\leq \left( \int_{|x| \leq R} dx \right)^{\frac{\alpha}{\alpha+2}} \left(\int_{|x| \leq R} |u(t_n,x)|^{\alpha+2} dx\right)^{\frac{2}{\alpha+2}} \\
	&\lesssim R^{\frac{N\alpha}{\alpha+2}} \left(\int |u(t_n,x)|^{\alpha+2} dx\right)^{\frac{2}{\alpha+2}} \rightarrow 0
	\end{align*}
	as $n\rightarrow \infty$. The proof is complete.
\end{proof}
	
	\subsection{Energy scattering}
	In this section, we give the proof of Theorem $\ref{theo-scat}$ which follows from the following result.
	
	\begin{proposition} \label{prop-scat}
		Let $N\geq 2$, $\mu\geq 0$ and $\frac{8}{N}<\alpha<\alpha^*$. Let $u_0 \in H^2$ be radially symmetric and satisfy \eqref{cond-ener} and \eqref{cond-grad-gwp}. Then for any $\vareps>0$, there exists $T=T(\vareps, u_0,Q)$ sufficiently large such that the corresponding global solution to  the focusing problem \eqref{4NLS} satisfies
		\begin{align} \label{small-epsilon}
		\|e^{-i(t-T)(\Delta^2-\mu\Delta)} u(T)\|_{L^{\kbo}([T,\infty),L^{\rbo})} \lesssim \vareps^\upsilon
		\end{align}
		for some $\upsilon>0$, where $\kbo$ and $\rbo$ are as in \eqref{defi-qrkm}.
	\end{proposition}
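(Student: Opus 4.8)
\emph{Strategy.} The plan is to use the Duhamel formula based at $t=0$ to write, for $t\ge T>\vareps^{-\sigma}$ (the exponent $\sigma>0$ is fixed only at the very end),
\[
e^{-i(t-T)(\Delta^2-\mu\Delta)}u(T)=e^{-it(\Delta^2-\mu\Delta)}u_0+F_1(t)+F_2(t),
\]
where $F_1(t)=i\int_I e^{-i(t-s)(\Delta^2-\mu\Delta)}|u|^\alpha u\,ds$ and $F_2(t)=i\int_J e^{-i(t-s)(\Delta^2-\mu\Delta)}|u|^\alpha u\,ds$ with $I=[T-\vareps^{-\sigma},T]$ and $J=[0,T-\vareps^{-\sigma}]$, and then to estimate the three pieces separately in $L^{\kbo}([T,\infty),L^{\rbo})$. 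Two global facts are used throughout: $\|u\|_{L^\infty(\R,H^2)}\le A=A(u_0,Q)$ (from Lemma~\ref{lem-coer-1} together with mass conservation) and the space-time bound $\int_I\|u(t)\|_{L^{\alpha+2}}^{\alpha+2}\,dt\lesssim|I|^{1/3}$ from Proposition~\ref{prop-mora-est}. We also use that $\rbo=\alpha+2<\tfrac{2N}{N-4}$ (since $\alpha<\alpha^*$), hence $H^2\hookrightarrow L^{\rbo}$, and that $\qbo<\kbo<\infty$ (a direct computation with \eqref{defi-qrkm}).

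\emph{Linear term and $F_2$.} By the homogeneous Strichartz estimate \eqref{homo-str-est}, $e^{-it(\Delta^2-\mu\Delta)}u_0\in L^{\qbo}(\R,L^{\rbo})$ with $(\qbo,\rbo)\in B$; interpolating this with $e^{-it(\Delta^2-\mu\Delta)}u_0\in L^\infty(\R,L^{\rbo})$ (from $H^2\hookrightarrow L^{\rbo}$) and using that the tail of a finite $L^{\qbo}_tL^{\rbo}_x$-norm vanishes, the linear term is $\le\vareps$ once $T$ is taken large enough. For $F_2$ one exploits the separation $t-s\ge\vareps^{-\sigma}$ valid for $t\ge T$, $s\in J$: the dispersive estimate \eqref{disper-est} gives $\|e^{-i(t-s)(\Delta^2-\mu\Delta)}(|u|^\alpha u)(s)\|_{L^{\rbo}}\lesssim|t-s|^{-\beta}\|u(s)\|_{L^{\alpha+2}}^{\alpha+1}$ with $\beta=\tfrac{N\alpha}{4(\alpha+2)}\in(0,1)$ (using $(\alpha+1)\rbo'=\alpha+2$). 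A dyadic decomposition of $J$ in the variable $t-s$, H\"older in time, Proposition~\ref{prop-mora-est}, and the radial Sobolev embedding \eqref{rad-sobo-emb} (to absorb the exterior contribution of $\|u(s)\|_{L^{\alpha+2}}$) then yield a pointwise bound $\|F_2(t)\|_{L^{\rbo}}\lesssim(t-T+\vareps^{-\sigma})^{-c_0}$ for some $c_0>0$; since $\alpha>\tfrac8N$ forces $\kbo\beta>1$ (which makes the relevant powers summable in the dyadic sum and integrable in $t$), integrating in $t$ gives $\|F_2\|_{L^{\kbo}([T,\infty),L^{\rbo})}\lesssim\vareps^{c_1\sigma}$ for some $c_1>0$, uniformly in $T$.

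\emph{The term $F_1$.} On $I$ dispersion is not available, since $t-s$ can be arbitrarily small, and any naive Strichartz estimate on $I$ loses a positive power $|I|^{c}=\vareps^{-c\sigma}$; the gain must come from the radial Sobolev embedding. Split $|u|^\alpha u=(1-\chi_\rho)|u|^\alpha u+\chi_\rho|u|^\alpha u$ with $\chi_\rho$ as in \eqref{defi-chi} at the radius $\rho=\vareps^{-\sigma'}$. On $\{|x|>\rho\}$ the radial Sobolev embedding \eqref{rad-sobo-emb} and mass conservation give $\|(1-\chi_\rho)|u(s)|^\alpha u(s)\|_{L^{\rbo'}}\lesssim\rho^{-(N-1)\alpha/(2\rbo')}$ uniformly in $s$, so the inhomogeneous Strichartz estimate for the non-admissible pair (Lemma~\ref{lem-str-est-non-adm}, applicable to $(\qbo,\rbo),\kbo,\mbo$ by Remark~\ref{rem-qrkm}) together with H\"older in time bounds this contribution by $|I|^{1/\mbo'}\rho^{-(N-1)\alpha/(2\rbo')}=\vareps^{-\sigma/\mbo'+\sigma'(N-1)\alpha/(2\rbo')}$, which is a positive power of $\vareps$ once $\sigma'$ is chosen large relative to $\sigma$. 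The interior piece $\chi_\rho|u|^\alpha u$, whose spatial support lies in $\{|x|\le\rho\}$, is handled by splitting the $s$-integral into $\{t-s\gtrsim\rho\}$ (dispersion, exactly as for $F_2$, now with the larger gap $\rho=\vareps^{-\sigma'}$) and $\{t-s\lesssim\rho\}$ (the non-admissible Strichartz estimate together with Proposition~\ref{prop-mora-est}), and then balancing the two scales $\vareps^{-\sigma}$ and $\rho$.

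\emph{Conclusion and obstacle.} Fixing first $\sigma$, then $\sigma'$, and finally $\upsilon>0$ smaller than every exponent produced above, all three terms are $\lesssim\vareps^{\upsilon}$, which is \eqref{small-epsilon}. The main obstacle is the estimate for $F_1$: because $I$ has the \emph{large} length $\vareps^{-\sigma}$, smallness can be recovered only by playing the slow $|I|^{1/3}$-growth of Proposition~\ref{prop-mora-est} against the $\rho^{-(N-1)\alpha}$-type spatial decay furnished by the radial Sobolev embedding, which is precisely why the Strichartz estimates for non-admissible pairs (Lemma~\ref{lem-str-est-non-adm}) and a careful choice of the two independent parameters $\sigma,\sigma'$ are needed.
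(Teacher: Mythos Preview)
Your overall decomposition is the same as the paper's, and your treatment of the linear term is fine. The real problem is the $F_1$ estimate.

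Your argument for $F_1$ relies on balancing the exterior piece (controlled by radial Sobolev) against an interior piece for which you invoke only the non-admissible Strichartz estimate and Proposition~\ref{prop-mora-est}. But for the interior contribution $\chi_\rho|u|^\alpha u$, neither of these inputs produces any \emph{smallness}: Lemma~\ref{lem-str-est-non-adm} gives
\[
\Bigl\|\int_I e^{-i(t-s)(\Delta^2-\mu\Delta)}\chi_\rho|u|^\alpha u\,ds\Bigr\|_{L^{\kbo}([T,\infty),L^{\rbo})}
\lesssim \|u\|_{L^{\kbo}(I,L^{\rbo})}^{\alpha+1},
\]
and interpolating the Morawetz bound $\|u\|_{L^{\alpha+2}(I,L^{\alpha+2})}^{\alpha+2}\lesssim|I|^{1/3}$ against $\|u\|_{L^\infty(I,L^{\rbo})}\le A$ only yields $\|u\|_{L^{\kbo}(I,L^{\rbo})}\lesssim|I|^{1/(3\kbo)}$, which is a \emph{positive} power of $|I|=\vareps^{-\sigma}$. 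The cutoff $\chi_\rho$ gives no improvement in the $L^{\rbo'}$ norm, and your proposed time-splitting $\{t-s\gtrsim\rho\}$ versus $\{t-s\lesssim\rho\}$ does not help either: on the latter set you are back to the same Strichartz bound. So the interior term grows like $\vareps^{-c\sigma}$ irrespective of the choice of $\sigma'$, and no balancing is possible.

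What is missing is the mechanism by which the paper obtains $\|u\|_{L^\infty(I,L^{\rbo})}\lesssim\vareps^\delta$. This comes from Corollary~\ref{coro-mora-est}: along a sequence $t_n\to\infty$ the localized mass $\int_{|x|\le R}|u(t_n)|^2$ tends to zero, so one can pick $T$ with $\int\chi_R|u(T)|^2\lesssim\vareps^2$. The crucial step is then to \emph{propagate} this smallness over the whole window $I$ via the flux estimate $\bigl|\frac{d}{dt}\int\chi_R|u(t)|^2\bigr|\lesssim R^{-1}$, which for $R\gg\vareps^{-2-\sigma}$ yields $\|\chi_R u\|_{L^\infty(I,L^2)}\lesssim\vareps$. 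Interpolating this against $H^2$ on the interior and using radial Sobolev on the exterior then gives the needed uniform-in-time smallness in $L^{\rbo}$. Your proposal does not invoke Corollary~\ref{coro-mora-est} or the localized-mass propagation at all, and without them the $F_1$ bound cannot close.

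A secondary issue: your claim that the $F_2$ bound holds ``uniformly in $T$'' is not correct as stated. With $\beta=\tfrac{N\alpha}{4(\alpha+2)}<1$, the dyadic sum you describe converges only when $\beta>\tfrac{\alpha+4}{3(\alpha+2)}$, i.e.\ when $(3N-4)\alpha>16$; for $2\le N\le 4$ this fails for part of the intercritical range and one picks up a positive power of $T$. The paper handles this (Case~3) by a different H\"older splitting and by tying $T$ to $\vareps$ via $T=\vareps^{-a\sigma}$, which you would also need to do.
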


\begin{proof}
	Let $T>0$ be a large parameter depending on $\vareps, u_0$ and $Q$ to be chosen later. For $T>\vareps^{-\sigma}$ with some $\sigma>0$ to be determined later, we use the Duhamel formula to write
	\begin{align} \label{duhamel}
	\begin{aligned}
	e^{-i(t-T)(\Delta^2-\mu\Delta)} u(T) &= e^{-it(\Delta^2 -\mu \Delta)} u_0 + i \int_0^T e^{-i(t-s)(\Delta^2-\mu \Delta)} |u(s)|^\alpha u(s) ds \\
	&= e^{-it(\Delta^2- \mu \Delta)} u_0 + F_1(t) + F_2(t),
	\end{aligned}
	\end{align}
	where
	\[
	F_1(t):= i \int_I e^{-i(t-s)(\Delta^2-\mu \Delta)} |u(s)|^\alpha u(s) ds, \quad F_2(t):= i \int_J e^{-i(t-s)(\Delta^2-\mu\Delta)} |u(s)|^\alpha u(s) ds
	\]
	with $I:= [T-\vareps^{-\sigma}, T]$ and $J:= [0,T-\vareps^{-\sigma}]$. 
	
	\noindent {\bf Estimate the linear part.} By Sobolev embedding and Strichartz estimates, 
	\[
	\|e^{-it(\Delta^2-\mu \Delta)} u_0\|_{L^{\kbo}(\R, L^{\rbo})} \lesssim \||\nabla|^{\gamc} e^{-it(\Delta^2-\mu\Delta)} u_0\|_{L^{\kbo}(\R, L^{\lbo})} \lesssim \|u_0\|_{\dot{H}^{\gamc}} \lesssim \|u_0\|_{H^2} <\infty,
	\]
	where
	\begin{align} \label{defi-l}
	\lbo := \frac{2N\alpha(\alpha+2)}{N\alpha^2+4(N-2)\alpha -16}.
	\end{align}
	Note that $(\kbo, \lbo)$ is a Biharmonic admissible pair. By the monotone convergence theorem, we can find $T>\vareps^{-\sigma}$ so that
	\begin{align} \label{est-linear}
	\|e^{-it(\Delta^2-\mu \Delta)} u_0\|_{L^{\kbo}([T,\infty),L^{\rbo})} \lesssim \vareps.
	\end{align}
	
	\noindent {\bf Estimate $F_1$.} 
	By Lemma $\ref{lem-non-est}$ and H\"older's inequality, we have
	\[
	\|F_1\|_{L^{\kbo}([T,\infty),L^{\rbo})} \lesssim \||u|^\alpha u\|_{L^{\mbo'}(I, L^{\rbo'})} \lesssim \|u\|^{\alpha+1}_{L^{\kbo}(I,L^{\rbo})} \lesssim |I|^{\frac{\alpha+1}{\kbo}} \|u\|^{\alpha+1}_{L^\infty(I,L^{\rbo})}.
	\]
	To estimate $\|u\|_{L^\infty(I,L^{\rbo})}$, we have from \eqref{small-L2} (by enlarging $T$ if necessary) that for any $R>0$,
	\[
	\int_{|x|\leq R} |u(T,x)|^2 dx \lesssim \vareps^2.
	\]
	By the definition of $\chi_R$, we get
	\[
	\int \chi_R(x) |u(T,x)|^2 dx \lesssim \vareps^2.
	\]
	We next compute
	\begin{align*}
	\frac{d}{dt} \int \chi_R |u(t)|^2 dx &= 2 \rea \int \chi_R \partial_t u(t) \overline{u}(t) dx \\
	&= 2 \ima \int \chi_R \left(\Delta^2 u(t) - \mu \Delta u(t)\right) \overline{u}(t) dx \\
	&= 2 \ima \int \Delta \chi_R \Delta u(t) \overline{u}(t) + 2 \nabla \chi_R \cdot \nabla \overline{u}(t) \Delta u(t) + \mu  \nabla \chi_R \cdot \nabla u(t) \overline{u}(t) dx.
	\end{align*}
	It follows from H\"older's inequality and \eqref{est-solu-gwp} that
	\begin{align*}
	\left| \frac{d}{dt} \int \chi_R |u(t)|^2 dx \right| &\lesssim 2\|\Delta \chi_R\|_{L^\infty} \|\Delta u(t)\|_{L^2} \|u(t)\|_{L^2} + 4 \|\nabla \chi_R\|_{L^\infty} \|\nabla u(t)\|_{L^2} \|\Delta u(t)\|_{L^2} \\
	&\mathrel{\phantom{\lesssim}} + 2\mu \|\nabla \chi_R\|_{L^\infty} \|\nabla u(t)\|_{L^2} \|u(t)\|_{L^2} \\
	&\lesssim R^{-1}
	\end{align*}
	for all $t\in \R$. We thus have for any $t\leq T$, 
	\begin{align*}
	\int \chi_R(x) |u(t,x)|^2 dx &= \int \chi_R(x) |u(T,x)|^2 dx - \int_t^T \left(\frac{d}{ds} \int \chi_R(x) |u(s,x)|^2 dx \right) ds \\
	&\leq \int \chi_R(x) |u(T,x)|^2 dx + CR^{-1}(T-t)
	\end{align*}
	for some constant $C=C(u_0,Q)>0$. By choosing $R>\vareps^{-2-\sigma}$, we see that for any $t\in I$,
	\[
	\int \chi_R(x) |u(t,x)|^2 dx \leq C\vareps^2 + CR^{-1} \vareps^{-\sigma} \leq 2C \vareps^2
	\]
	hence
	\begin{align} \label{est-chi-R}
	\|\chi_R u\|_{L^\infty(I,L^2)} \lesssim \vareps,
	\end{align}
	where we have used the fact that $\chi^2_R \leq \chi_R$ due to $0\leq \chi_R \leq 1$.
	
	In the case $N\geq 5$, we use \eqref{est-chi-R}, the radial Sobolev embedding, and \eqref{est-solu-gwp} to have
	\begin{align*}
	\|u\|_{L^\infty(I, L^{\rbo})} &\leq \|\chi_R u\|_{L^\infty(I,L^{\rbo})} + \|(1-\chi_R) u\|_{L^\infty(I,L^{\rbo})} \\
	&\leq \|\chi_R u\|^{\frac{8-(N-4)\alpha}{4(\alpha+2)}}_{L^\infty(I,L^2)} \|\chi_R u\|^{\frac{N\alpha}{4(\alpha+2)}}_{L^\infty(I, L^{\frac{2N}{N-4}})}  + \|(1-\chi_R) u\|^{\frac{\alpha}{\alpha+2}}_{L^\infty(I,L^\infty)} \|(1-\chi_R) u\|^{\frac{2}{\alpha+2}}_{L^\infty(I, L^2)} \\
	&\lesssim \vareps^{\frac{8-(N-4)\alpha}{4(\alpha+2)}} + R^{-\frac{(N-1)\alpha}{2(\alpha+2)}}  \\
	&\lesssim \vareps^{\frac{8-(N-4)\alpha}{4(\alpha+2)}}
	\end{align*}
	provided that $R>\vareps^{-\frac{8-(N-4)\alpha}{2(N-1)\alpha}}$. Here we have used the Sobolev embedding $H^2(\R^N) \hookrightarrow L^{\frac{2N}{N-4}}(\R^N)$ for $N\geq 5$.
	
	In the case $N=4$, we interpolate between $L^2$ and $L^{2\rbo}$ and use the Sobolev embedding $H^2(\R^4) \hookrightarrow L^{2\rbo}(\R^4)$ to have
	\[
	\|\chi_R u\|_{L^\infty(I,L^{\rbo})} \leq \|\chi_R u\|^{\frac{1}{\rbo-1}}_{L^\infty(I,L^2)} \|\chi_R u\|^{\frac{\rbo-2}{\rbo-1}}_{L^\infty(I,L^{2\rho})} \lesssim \vareps^{\frac{1}{\rbo-1}}.
	\]
	Thus we get
	\[
	\|u\|_{L^\infty(I,L^{\rho})} \lesssim \vareps^{\frac{1}{\rbo-1}}
	\]
	provided that $R>\vareps^{-\frac{2(\alpha+2)}{3\alpha(\rbo-1)}}$.
	
	In the case $2\leq N\leq 3$, we use the embedding $H^2(\R^N) \hookrightarrow L^\infty(\R^N)$ to get
	\[
	\|\chi_R u\|_{L^\infty(I,L^{\rbo})} \leq \|\chi_R u\|^{\frac{2}{\alpha+2}}_{L^\infty(I,L^2)} \|\chi_R u\|^{\frac{\alpha}{\alpha+2}}_{L^\infty(I, L^\infty)} \lesssim \vareps^{\frac{2}{\alpha+2}}.
	\]
	It follows that
	\[
	\|u\|_{L^\infty(I, L^{\rbo})} \lesssim \vareps^{\frac{2}{\alpha+2}}
	\]
	provided that $R>\vareps^{-\frac{4}{(N-1)\alpha}}$. 
	
	In all cases, by taking $R>0$ sufficiently large depending on $\vareps$, we have proved that
	\[
	\|u\|_{L^\infty(I, L^{\rbo})} \lesssim \vareps^{\delta},
	\]
	where
	\[
	\delta:= \left\{
	\renewcommand*{\arraystretch}{1.3}
	\begin{array}{ccl}
	\frac{8-(N-4)\alpha}{4(\alpha+2)} &\text{if}& N\geq 5, \\
	\frac{2}{\alpha+2} &\text{if} & 2\leq N\leq 4.
	\end{array}
	\right.
	\]
	This shows that
	\begin{align} \label{est-F1}
	\|F_1\|_{L^{\kbo}([T,\infty),L^{\rbo})} \lesssim  \vareps^{(\alpha+1)\left(\delta - \frac{\sigma}{\kbo}\right)}.
	\end{align}
	
	\noindent {\bf Estimate $F_2$.} We will consider separately three cases: $N\geq 6$, $N=5$, and $2\leq N \leq 4$.
	
	\vspace{2mm}
	
	\noindent {\bf Case 1. $N\geq 6$.} By H\"older's inequality,
	\[
	\|F_2\|_{L^{\kbo}([T,\infty), L^{\rbo})} \leq \|F_2\|^\theta_{L^{\kbo}([T,\infty), L^{\lbo})} \|F_2\|^{1-\theta}_{L^{\kbo}([T,\infty), L^{\nbo})}
	\]
	where $\lbo$ is as in \eqref{defi-l}, $\theta \in (0,1)$ and $\nbo>\rbo$ satisfy
	\[
	\frac{1}{\rbo} =\frac{\theta}{\lbo} + \frac{1-\theta}{\nbo}.
	\]
	Since $(\kbo,\lbo) \in B$, we use Strichartz estimates and the fact that
	\[
	F_2(t) = e^{-i(t-T+\vareps^{-\sigma})(\Delta^2-\mu \Delta)} u(T-\vareps^{-\sigma}) - e^{-it(\Delta^2-\mu \Delta)} u_0
	\]
	to have
	\[
	\|F_2\|_{L^{\kbo}([T, \infty), L^{\lbo})} \lesssim 1.
	\]
	By dispersive estimates \eqref{disper-est}, Sobolev embedding and \eqref{est-solu-gwp}, we have for $t\geq T$,
	\begin{align*}
	\|F_2(t)\|_{L^{\nbo}} &\lesssim \int_0^{T-\vareps^{-\sigma}} (t-s)^{-\frac{N}{4}\left(1-\frac{2}{\nbo}\right)} \||u(s)|^\alpha u(s)\|_{L^{{\nbo}'}} ds \\
	&\lesssim \int_0^{T-\vareps^{-\sigma}} (t-s)^{-\frac{N}{4} \left(1-\frac{2}{\nbo}\right)} \|u(s)\|^{\alpha+1}_{L^{(\alpha+1){\nbo}'}} ds \\
	&\lesssim (t-T+\vareps^{-\sigma})^{-\frac{N}{4} \left(1-\frac{2}{\nbo}\right) +1}
	\end{align*}
	provided that
	\[
	(\alpha+1){\nbo}' \in \left[2,\frac{2N}{N-4}\right], \quad \frac{N}{4} \left(1-\frac{2}{\nbo}\right) -1 >0.
	\]
	It follows that
	\begin{align*}
	\|F_2\|_{L^{\kbo}([T,\infty), L^{\nbo})} &\lesssim \left( \int_T^\infty (t-T+\vareps^{-\sigma})^{-\left[\frac{N}{4} \left(1-\frac{2}{\nbo}\right) -1 \right] \kbo} dt \right)^{\frac{1}{\kbo}} \\
	&\lesssim \vareps^{\sigma \left[ \frac{N}{4} \left(1-\frac{2}{\nbo}\right) -1 -\frac{1}{\kbo} \right]}
	\end{align*}
	for
	\[
	\frac{N}{4} \left(1-\frac{2}{\nbo}\right) -1 -\frac{1}{\kbo} >0.
	\]
	We thus obtain
	\begin{align} \label{est-F2}
	\|F_2\|_{L^{\kbo}([T,\infty), L^{\rbo})} \lesssim \vareps^{\sigma \left[\frac{N}{4}\left(1-\frac{2}{\nbo}\right) - 1 - \frac{1}{\kbo}\right] (1-\theta)}.
	\end{align}
	We will choose a suitable $\nbo$ satisfying
	\[
	\nbo > \rbo, \quad (\alpha+1) {\nbo}' \in \left[2,\frac{2N}{N-4}\right], \quad \frac{N}{4} \left(1-\frac{2}{\nbo}\right) - 1- \frac{1}{\kbo} >0.
	\]
	These condition are equivalent to 
	\begin{align} \label{cond-n}
	0 \leq \frac{1}{\nbo} \leq \frac{1}{\alpha+2}, \quad \frac{1}{\nbo} \in \left[ \frac{1-\alpha}{2}, \frac{N+4-(N-4)\alpha}{2N}\right], \quad \frac{1}{\nbo} <\frac{(N-4)\alpha^2 + 3(N-4)\alpha -8}{2N\alpha(\alpha+2)}.
	\end{align}
			
	In the case $\alpha>1$, we take $\frac{1}{\nbo} =0$ or $\nbo=\infty$. 
			
	In the case $\alpha \leq 1$, which together with $\frac{8}{N}<\alpha<\frac{8}{N-4}$ imply $N\geq 8$, we take $\frac{1}{\nbo} = \frac{1-\alpha}{2}$ or $\nbo = \frac{2}{1-\alpha}$. By direct computations, we can check that the above conditions are fulfilled for this choice of $\nbo$. 
			
	Thanks to \eqref{duhamel}, we get from \eqref{est-linear}, \eqref{est-F1} and \eqref{est-F2} that for $\sigma>0$ sufficiently small, there exists $\upsilon = \upsilon(\sigma)>0$ such that
	\[
	\|e^{-i(t-T)(\Delta^2-\mu \Delta)} u(T)\|_{L^{\kbo}([T,\infty), L^{\rbo})} \lesssim \vareps^{\upsilon}.
	\]	
	
	\vspace{2mm}
	
	\noindent {\bf Case 2. $N = 5$.} In this case, the right hand side of the last inequality in \eqref{cond-n} may take negative values in the intercritical regime $\frac{8}{5}<\alpha<8$. We need to proceed differently. Recall that $\kbo=\frac{4\alpha(\alpha+2)}{8-\alpha}$ and  $\rbo=\alpha+2$. We define $(\vartheta,\varrho)$ satisfying
	\[
	\frac{1}{\kbo} = \frac{\theta}{\vartheta}, \quad \frac{1}{\rbo} = \frac{\theta}{\varrho}, \quad \theta = \frac{8}{5\alpha} \in (0,1).
	\]
	It is straightforward to check that $\frac{4}{\vartheta}+\frac{5}{\varrho}=\frac{5}{2}$ and $\varrho =\frac{8(\alpha+2)}{5\alpha}\in (2, 10)$. This shows that $(\vartheta,\varrho) \in B$. We estimate
	\[
	\|F_2\|_{L^{\kbo}([T,\infty), L^{\rbo})} \leq \|F_2\|^{\theta}_{L^{\vartheta}([T,\infty), L^{\varrho})} \|F_2\|^{1-\theta}_{L^\infty([T,\infty),L^\infty)}.
	\]
	Since $(\vartheta,\varrho) \in B$, we have $\|F_2\|_{L^{\vartheta}([T,\infty),L^{\varrho})} \lesssim 1$. Arguing as above, we have
	\begin{align*}
	\|F_2(t)\|_{L^{\infty}} &\lesssim \int_0^{T-\vareps^{-\sigma}} (t-s)^{-\frac{5}{4}} \||u(s)|^\alpha u(s)\|_{L^1} ds \\
	&\lesssim \int_0^{T-\vareps^{-\sigma}} (t-s)^{-\frac{5}{4}} \|u(s)\|^{\alpha+1}_{L^{\alpha+1}} ds \\
	&\lesssim (t-T+\vareps^{-\sigma})^{-\frac{1}{4}}
	\end{align*}
	which implies
	\[
	\|F_2\|_{L^\infty([T,\infty),L^\infty)} \lesssim \vareps^{\frac{\sigma}{4}}.
	\]
	Thus we obtain
	\[
	\|F_2\|_{L^{\kbo}([T,\infty),L^{\rbo})} \lesssim \vareps^{\frac{(5\alpha-8)\sigma}{20\alpha}}.
	\]
	
	
	
	
	\vspace{2mm}

	\noindent {\bf Case 3. $2\leq N \leq 4$.} Recall that we are considering $\alpha>\frac{8}{N}$ here. In this case, the third condition in \eqref{cond-n} does not hold. To overcome the difficulty, we make use of \eqref{mora-est} as follows. By dispersive estimates \eqref{disper-est} and H\"older's inequality, we have for $t\geq T$,
	\begin{align*}
	\|F_2(t)\|_{L^\infty} &\lesssim \int_J (t-s)^{-\frac{N}{4}} \|u(s)\|^{\alpha+1}_{L^{\alpha+1}} ds \\
	&\lesssim \int_J (t-s)^{-\frac{N}{4}} \|u(s)\|^{\frac{(\alpha-1)(\alpha+2)}{\alpha}}_{L^{\alpha+2}} \|u(s)\|^{\frac{2}{\alpha}}_{L^2} ds \\
	&\lesssim \int_J (t-s)^{-\frac{N}{4}} \|u(s)\|^{\frac{(\alpha-1)(\alpha+2)}{\alpha}}_{L^{\alpha+2}} ds \\
	&\lesssim \left\|(t-s)^{-\frac{N}{4}} \right\|_{L^\alpha_s(J)} \| \|u(s)\|^{\frac{(\alpha-1)(\alpha+2)}{\alpha}}_{L^{\alpha+2}} \|_{L^{\frac{\alpha}{\alpha-1}}_s(J)} \\
	&\lesssim \left\|(t-s)^{-\frac{N}{4}} \right\|_{L^\alpha_s(J)} \left(\|u\|^{\alpha+2}_{L^{\alpha+2}(J\times \R^N)} \right)^{\frac{\alpha-1}{\alpha}}.
	\end{align*}
	We see that for $t\geq T$, 
	\begin{align*}
	\left\|(t-s)^{-\frac{N}{4}}\right\|_{L^\alpha_s(J)} & = \left( \int_0^{T-\vareps^{-\sigma}} (t-s)^{-\frac{N\alpha}{4}} ds \right)^{\frac{1}{\alpha}} \\
	&\lesssim (t-T+\vareps^{-\sigma})^{-\frac{N\alpha-4}{4\alpha}},
	\end{align*}
	where we have used the fact that $t\geq t-T+\vareps^{-\sigma}$ as $T>\vareps^{-\sigma}$. On the other hand, by \eqref{mora-est},
	\[
	\|u\|^{\alpha+2}_{L^{\alpha+2}(J\times \R^N)} \lesssim |J|^{\frac{1}{3}} \lesssim T^{\frac{1}{3}}.
	\]
	We infer that for $t\geq T$,
	\[
	\|F_2(t)\|_{L^\infty} \lesssim (t-T+\vareps^{-\sigma})^{-\frac{N\alpha-4}{4\alpha}} T^{\frac{\alpha-1}{3\alpha}}.
	\]
	It follows that
	\begin{align*}
	\|F_2\|_{L^\infty([T,\infty)\times \R^N)} &\lesssim T^{\frac{\alpha-1}{3\alpha}} \vareps^{\frac{(N\alpha-4)\sigma}{4\alpha}}.
	\end{align*}
	We next define
	\[
	a:=\frac{8}{N\alpha}\kbo, \quad b:= \frac{8}{N\alpha} \rbo.
	\]
	It is easy to check that 
	\[
	\frac{4}{a} + \frac{N}{b}=\frac{N}{2}, \quad b\in [2,\infty)
	\]
	which implies that $(a,b)\in B$. By interpolation, we have
	\begin{align} \label{est-F2-low}
	\begin{aligned}
	\|F_2\|_{L^{\kbo}([T,\infty), L^{\rbo})} &\leq \|F_2\|^{\frac{8}{N\alpha}}_{L^a([T,\infty), L^b)}  \|F_2\|^{\frac{N\alpha-8}{N\alpha}}_{L^\infty([T,\infty)\times \R^N)} \\
	&\lesssim \left(T^{\frac{\alpha-1}{3\alpha}} \vareps^{\frac{(N\alpha-4)\sigma}{4\alpha}}\right)^{\frac{N\alpha-8}{N\alpha}}.
	\end{aligned}
	\end{align}
	Collecting \eqref{duhamel}, \eqref{est-linear}, \eqref{est-F1} and \eqref{est-F2-low}, we get
	\[
	\|e^{-i(t-T)(\Delta^2-\mu\Delta)} u(T)\|_{L^{\kbo}([T,\infty), L^{\rbo})} \lesssim \vareps + \vareps^{\frac{2(\alpha+1)(\alpha-\sigma)}{\alpha(\alpha+2)}} + \left(T^{\frac{\alpha-1}{3\alpha}} \vareps^{\frac{(N\alpha-4)\sigma}{4\alpha}}\right)^{\frac{N\alpha-8}{N\alpha}}.
	\]
	By taking $T=\vareps^{-a\sigma}$ with some $a>1$ to be chosen shortly (it ensures $T>\vareps^{-\sigma}$) and choosing $\sigma>0$ small enough, we obtain
	\begin{align} \label{est-low}
	\|e^{-i(t-T)(\Delta^2-\mu\Delta)} u(T)\|_{L^{\kbo}([T,\infty), L^{\rbo})} \lesssim \vareps^{\upsilon}
	\end{align}
	for some $\upsilon>0$. The above estimate requires
	\[
	\frac{N\alpha-4}{4\alpha} - \frac{(\alpha-1)a}{3\alpha} >0 \quad \text{or} \quad a <\frac{3(N\alpha-4)}{4(\alpha-1)}.
	\]
	It now remains to show that
	\[
	\frac{3(N\alpha-4)}{4(\alpha-1)} >1
	\]
	which is satisfied since $\alpha>\frac{8}{N}$ and $2\leq N\leq 4$. This allows us to choose $a>1$ so that \eqref{est-low} holds.

	Combining the above cases, we finish the proof.
\end{proof}

	\noindent {\it Proof of Theorem $\ref{theo-scat}$.}
	It follows immediately from Lemma $\ref{lem-small-gwp}$, Lemma $\ref{lem-small-scat}$ and Proposition $\ref{prop-scat}$.
	\hfill $\Box$

	\section{Finite time blow-up}
	\label{S5}
	\setcounter{equation}{0}
	In this section, we give the proofs of the finite time blow-up given in Theorem $\ref{theo-blup-inter}$ and Theorem $\ref{theo-blup-ener}$. Let us start with the following Morawetz estimates due to Boulenger-Lenzmann \cite{BL}.
	
	\begin{lemma} [Radial Morawetz estimates \cite{BL}] \label{lem-rad-mora-est}
		Let $N\geq 2$, $\mu \geq 0$, $\alpha>0$ and $\alpha \leq \frac{8}{N-4}$ if $N\geq 5$. Let $u\in C([0,T^*), H^2)$ be a radial solution to  the focusing problem \eqref{4NLS}. Let $\varphi_R$ be as in \eqref{def-varphi-R}. Then for any $t\in [0,T^*)$,
		\begin{align*}
		\frac{d}{dt} M_{\varphi_R}(t) \leq 4N\alpha E_\mu(u(t)) &- 2(N\alpha-8) \|\Delta u(t)\|^2_{L^2} - 2(N\alpha-4) \mu \|\nabla u(t)\|^2_{L^2} \\
		&+O \left(R^{-4} + \mu R^{-2} + R^{-2} \|\nabla u(t)\|^2_{L^2} + R^{-\frac{(N-1)\alpha}{2}} \|\nabla u(t)\|^{\frac{\alpha}{2}}_{L^2} \right).
		\end{align*}
	\end{lemma}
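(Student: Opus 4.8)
The plan is to run the virial/Morawetz machinery of \cite{BL} with the specific weight $\varphi_R$: apply Lemma \ref{lem-virial-iden} with $\varphi=\varphi_R$, split every integral into the regions $\{|x|\le R\}$ and $\{|x|>R\}$, and exploit that $\varphi_R(x)=|x|^2$ on $\{|x|\le R\}$ so that the interior contributions assemble (after the identity \eqref{defi-K-mu}) into the main term. On $\{|x|\le R\}$ one has $\Delta^3\varphi_R=\Delta^2\varphi_R=\partial^2_{kl}\Delta\varphi_R=0$, $\partial^2_{kl}\varphi_R=2\delta_{kl}$ and $\Delta\varphi_R=2N$, so the part of the right-hand side of Lemma \ref{lem-virial-iden} coming from $\{|x|\le R\}$ equals
\[
16\int_{|x|\le R}\sum_{k,l}|\partial^2_{kl}u|^2\,dx+8\mu\int_{|x|\le R}|\nabla u|^2\,dx-\frac{4N\alpha}{\alpha+2}\int_{|x|\le R}|u|^{\alpha+2}\,dx.
\]
For a radial function one has the pointwise identity $\sum_{k,l}|\partial^2_{kl}u|^2=|\partial^2_r u|^2+\tfrac{N-1}{r^2}|\partial_r u|^2$, and $\int_{\R^N}(|\partial^2_r u|^2+\tfrac{N-1}{r^2}|\partial_r u|^2)\,dx=\|\Delta u\|^2_{L^2}$; I will use these together with $\sum_{k,l}\partial^2_{kl}\varphi_R\,\partial_k\bar u\,\partial_l u=\varphi''_R|\partial_r u|^2$ and $\sum_{k,l,m}\partial^2_{lm}\varphi_R\,\partial^2_{kl}\bar u\,\partial^2_{mk}u=\varphi''_R|\partial^2_r u|^2+\tfrac{N-1}{r^3}\varphi'_R|\partial_r u|^2$ (the last two radial reductions already appear in the proof of Proposition \ref{prop-mora-est}).

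Next I would bound the exterior contributions. The biharmonic Hessian term gives $8\int_{|x|>R}\big(\varphi''_R|\partial^2_r u|^2+\tfrac{N-1}{r^3}\varphi'_R|\partial_r u|^2\big)\,dx$; by the convexity bounds $\varphi''_R\le 2$ and $\varphi'_R/r\le 2$ from \eqref{proper-varphi-R} this is at most $16\int_{|x|>R}\big(|\partial^2_r u|^2+\tfrac{N-1}{r^2}|\partial_r u|^2\big)\,dx$, which adds to the interior Hessian piece to produce exactly $16\|\Delta u\|^2_{L^2}$. Likewise the $\mu$-Hessian term $4\mu\int_{|x|>R}\varphi''_R|\partial_r u|^2\,dx\le 8\mu\int_{|x|>R}|\nabla u|^2\,dx$ completes $8\mu\|\nabla u\|^2_{L^2}$. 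The exterior part of the nonlinearity is $-\tfrac{2\alpha}{\alpha+2}\int_{|x|>R}\Delta\varphi_R|u|^{\alpha+2}\,dx\le0$ since $\Delta\varphi_R\ge0$, hence discarded, and replacing $\int_{|x|\le R}|u|^{\alpha+2}$ by $\|u\|^{\alpha+2}_{L^{\alpha+2}}$ costs $\tfrac{4N\alpha}{\alpha+2}\int_{|x|>R}|u|^{\alpha+2}$, which by the radial Sobolev embedding \eqref{rad-sobo-emb} and conservation of mass satisfies $\int_{|x|>R}|u|^{\alpha+2}\le\big(\sup_{|x|>R}|u|^{\alpha}\big)\|u\|^2_{L^2}\lesssim R^{-\frac{(N-1)\alpha}{2}}\|\nabla u\|^{\alpha/2}_{L^2}$. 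The four remaining exterior terms involve only derivatives of $\varphi_R$ of order $\ge2$ and vanish for $|x|\le R$; by the scaling $\varphi_R=R^2\theta(\cdot/R)$, with $\|\Delta^2\varphi_R\|_{L^\infty}$ and the second derivatives of $\Delta\varphi_R$ being $O(R^{-2})$ and $\|\Delta^3\varphi_R\|_{L^\infty}=O(R^{-4})$, so by H\"older and conservation of mass they contribute $O\big(R^{-4}+\mu R^{-2}+R^{-2}\|\nabla u\|^2_{L^2}\big)$. Assembling everything and rewriting $16\|\Delta u\|^2_{L^2}+8\mu\|\nabla u\|^2_{L^2}-\tfrac{4N\alpha}{\alpha+2}\|u\|^{\alpha+2}_{L^{\alpha+2}}=16K_\mu(u)=4N\alpha E_\mu(u)-2(N\alpha-8)\|\Delta u\|^2_{L^2}-2(N\alpha-4)\mu\|\nabla u\|^2_{L^2}$ via \eqref{defi-K-mu} yields the lemma.

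I expect the biharmonic Hessian term to be the main obstacle. On $\{|x|>R\}$ it is nonnegative, so — in contrast to Proposition \ref{prop-mora-est}, where a lower bound on $\tfrac{d}{dt}M_{\varphi_R}$ is sought and this positivity is an asset — here it cannot be treated as an error term, and it must instead be absorbed into $16\|\Delta u\|^2_{L^2}$ using $\varphi''_R\le2$, $\varphi'_R/r\le2$ and the radial pointwise identity for $\sum_{k,l}|\partial^2_{kl}u|^2$; tracking the exact coefficient $16$ is precisely what makes the constant $-2(N\alpha-8)$ in front of $\|\Delta u\|^2_{L^2}$ come out right. A secondary point is that the nonlinear tail $\int_{|x|>R}|u|^{\alpha+2}$ cannot be controlled by a plain $H^1$ bound (which gives no $L^\infty$ control at all once $N\ge2$); the radial Sobolev embedding \eqref{rad-sobo-emb} is essential here, and this is where the hypothesis $N\ge2$ is used. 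The restriction $\alpha\le8/(N-4)$ for $N\ge5$ enters only through $H^2\hookrightarrow L^{\alpha+2}$, which makes $E_\mu(u(t))$ and all the integrals above finite and the virial identity of Lemma \ref{lem-virial-iden} applicable.
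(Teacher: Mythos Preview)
Your proposal is correct and is precisely the argument of Boulenger--Lenzmann \cite[Lemma~3.1]{BL}, which is exactly what the paper cites for this lemma without reproducing a proof. The only points worth noting are that the paper does not explicitly record the inequality $\Delta\varphi_R\ge 0$ (needed to discard the exterior nonlinear term), but it follows at once from $\varphi''_R\ge 0$ and $\varphi'_R\ge 0$; and that your handling of the biharmonic Hessian via the radial pointwise identity $\sum_{k,l}|\partial^2_{kl}u|^2=|\partial^2_r u|^2+\tfrac{N-1}{r^2}|\partial_r u|^2$ together with $\varphi''_R\le 2$, $\varphi'_R/r\le 2$ is exactly the mechanism in \cite{BL} that produces the correct coefficient $16$ in front of $\|\Delta u\|^2_{L^2}$.
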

	
	We refer the reader to \cite[Lemma 3.1]{BL} for the proof of this result.
	
	\subsection{Finite time blow-up in the mass and energy intercritical case}
	\begin{lemma} \label{lem-K-mu}
		Let $N\geq 1$, $\mu \geq 0$ and $\frac{8}{N}<\alpha<\alpha^*$. Let $u_0 \in H^2$ satisfy \eqref{cond-ener} and \eqref{cond-grad-blup}. Let $u$ be the corresponding solution to  the focusing problem \eqref{4NLS} defined on the maximal forward time interval $[0,T^*)$. Then there exists $\delta= \delta(u_0,Q)>0$ such that for any $t\in [0,T^*)$,
		\begin{align} \label{est-K-mu}
		K_\mu(u(t)) \leq -\delta,
		\end{align}
		where $K_\mu$ is as in \eqref{defi-K-mu}.	
	\end{lemma}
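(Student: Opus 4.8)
The plan is to exploit the second identity in \eqref{defi-K-mu} together with the coercivity information already extracted in Lemma \ref{lem-coer-1}. Since $\alpha>\frac{8}{N}$ we have $N\alpha-8>0$ and $N\alpha-4>0$, so that with $\mu\geq 0$ the identity
\[
K_\mu(u(t)) = \frac{N\alpha}{4} E_\mu(u(t)) - \frac{N\alpha-8}{8}\|\Delta u(t)\|^2_{L^2} - \frac{(N\alpha-4)\mu}{8}\|\nabla u(t)\|^2_{L^2}
\]
immediately yields the one-sided bound
\[
K_\mu(u(t)) \leq \frac{N\alpha}{4} E_\mu(u(t)) - \frac{N\alpha-8}{8}\|\Delta u(t)\|^2_{L^2}.
\]
Note that $M(u_0)>0$, since otherwise $u_0=0$ and \eqref{cond-grad-blup} fails.

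Next I would multiply by $[M(u_0)]^{\sigc}$ (recall $\sigc>0$ in the range $\frac{8}{N}<\alpha<\alpha^*$), use the conservation of mass and energy, and pass to the scaling-invariant quantity $\|\Delta u(t)\|_{L^2}\|u(t)\|_{L^2}^{\sigc}$:
\[
K_\mu(u(t))[M(u_0)]^{\sigc} \leq \frac{N\alpha}{4}E_\mu(u_0)[M(u_0)]^{\sigc} - \frac{N\alpha-8}{8}\big(\|\Delta u(t)\|_{L^2}\|u(t)\|_{L^2}^{\sigc}\big)^2.
\]
To get a uniform gap rather than mere negativity, I would use the strictness in \eqref{cond-ener}: fix $\theta=\theta(u_0,Q)>0$ with $E_\mu(u_0)[M(u_0)]^{\sigc}<(1-\theta)E_0(Q)[M(Q)]^{\sigc}$, and rewrite $E_0(Q)[M(Q)]^{\sigc}$ via \eqref{rela-Q}, which gives $\frac{N\alpha}{4}E_0(Q)[M(Q)]^{\sigc}=\frac{N\alpha-8}{8}\big(\|\Delta Q\|_{L^2}\|Q\|_{L^2}^{\sigc}\big)^2$. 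Combining,
\[
K_\mu(u(t))[M(u_0)]^{\sigc} < \frac{N\alpha-8}{8}\Big[(1-\theta)\big(\|\Delta Q\|_{L^2}\|Q\|_{L^2}^{\sigc}\big)^2 - \big(\|\Delta u(t)\|_{L^2}\|u(t)\|_{L^2}^{\sigc}\big)^2\Big].
\]

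Finally I would invoke \eqref{est-solu-blup} from Lemma \ref{lem-coer-1}, which under \eqref{cond-ener} and \eqref{cond-grad-blup} guarantees $\|\Delta u(t)\|_{L^2}\|u(t)\|_{L^2}^{\sigc} > \|\Delta Q\|_{L^2}\|Q\|_{L^2}^{\sigc}$ for all $t\in[0,T^*)$. Substituting this lower bound into the bracket gives
\[
K_\mu(u(t))[M(u_0)]^{\sigc} < -\theta\,\frac{N\alpha-8}{8}\big(\|\Delta Q\|_{L^2}\|Q\|_{L^2}^{\sigc}\big)^2,
\]
hence \eqref{est-K-mu} with $\delta := \theta\,\frac{N\alpha-8}{8}\big(\|\Delta Q\|_{L^2}\|Q\|_{L^2}^{\sigc}\big)^2\,[M(u_0)]^{-\sigc}>0$. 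There is no real obstacle here: the only subtlety is ensuring the uniform constant survives, which is handled by the $\theta$-trick exploiting the strict energy inequality, exactly as in the proof of \eqref{coer-1}; everything else reduces to the elementary identity for $K_\mu$ and the conservation laws.
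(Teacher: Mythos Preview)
Your proof is correct and follows essentially the same approach as the paper: both use the identity for $K_\mu$ in \eqref{defi-K-mu}, drop the nonnegative gradient term, multiply by $[M(u_0)]^{\sigc}$, invoke the $\theta$-gap from the strict energy inequality together with \eqref{rela-Q}, and then apply \eqref{est-solu-blup} from Lemma~\ref{lem-coer-1} to close. Your explicit constant $\delta$ agrees with the paper's (after rewriting $(\|\Delta Q\|_{L^2}\|Q\|_{L^2}^{\sigc})^2=\|\Delta Q\|_{L^2}^2[M(Q)]^{\sigc}$).
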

		
	\begin{proof}
		Multiplying $K_\mu(u(t))$ with $[M(u(t))]^{\sigc}$ and using the conservation of mass and energy, we have
		\begin{align*}
		K_\mu(u(t)) [M(u(t))]^{\sigc} &= \left( \frac{N\alpha}{4} E_\mu(u(t)) - \frac{N\alpha-8}{8}\|\Delta u(t)\|^2_{L^2} - \frac{(N\alpha-4)\mu}{8} \|\nabla u(t)\|^2_{L^2} \right) \|u(t)\|^{2\sigc}_{L^2} \\
		&\leq \frac{N\alpha}{4} E_\mu (u(t)) [M(u(t))]^{\sigc} - \frac{N\alpha-8}{8} \left(\|\Delta u(t)\|_{L^2} \|u(t)\|^{\sigc}_{L^2} \right)^2 \\
		&= \frac{N\alpha}{4} E_\mu(u_0) [M(u_0)]^{\sigc} - \frac{N\alpha-8}{8} \left( \|\Delta u(t)\|_{L^2} \|u(t)\|^{\sigc}_{L^2} \right)^2
		\end{align*}
		for all $t\in [0,T^*)$. By \eqref{cond-ener} and \eqref{energy-Q}, there exists $\theta=\theta(u_0,Q)>0$ such that
		\[
		E_\mu(u_0) [M(u_0)]^{\sigc} < (1-\theta) E_0(Q) [M(Q)]^{\sigc} = (1-\theta)\frac{N\alpha-8}{2N\alpha} \left(\|\Delta Q\|_{L^2} \|Q\|^{\sigc}_{L^2}\right)^2.
		\]
		This together with \eqref{est-solu-blup} imply
		\begin{align*}
		K_\mu(u(t))[M(u(t))]^{\sigc} &\leq (1-\theta)\frac{N\alpha-8}{8} \left(\|\Delta Q\|_{L^2} \|Q\|^{\sigc}_{L^2} \right)^2 - \frac{N\alpha-8}{8} \left(\|\Delta Q\|_{L^2} \|Q\|^{\sigc}_{L^2}\right)^2 \\
		&=-\frac{(N\alpha-8)\theta}{8} \|\Delta Q\|^2_{L^2} [M(Q)]^{\sigc}
		\end{align*}
		which shows that
		\[
		K_\mu(u(t)) \leq -\frac{(N\alpha-8)\theta}{8} \|\Delta Q\|^2_{L^2} \left(\frac{M(Q)}{M(u_0)}\right)^{\sigc}=:-\delta
		\]
		for all $t \in [0,T^*)$. The proof is complete.
	\end{proof}
	
	\begin{corollary} \label{coro-K-mu}
		Let $N\geq 1$, $\mu \geq 0$ and $\frac{8}{N}<\alpha<\alpha^*$. Let $u_0 \in H^2$ satisfy \eqref{cond-ener} and \eqref{cond-grad-blup}. Let $u$ be the corresponding solution to  the focusing problem \eqref{4NLS} defined on the maximal forward time interval $[0,T^*)$. Then it holds that
		\begin{align} \label{claim-Delta}
		\inf_{t\in [0,T^*)} \|\Delta u(t)\|_{L^2} \gtrsim 1.
		\end{align}
	\end{corollary}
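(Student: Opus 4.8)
The plan is to deduce \eqref{claim-Delta} from the uniform sign condition \eqref{est-K-mu} established in Lemma \ref{lem-K-mu}, combined with the sharp Gagliardo--Nirenberg inequality and the conservation of mass. The argument is short and self-contained, so I will describe it essentially in full.

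First I would fix $t\in[0,T^*)$ and use \eqref{est-K-mu} together with the definition \eqref{defi-K-mu} of $K_\mu$. Discarding the nonnegative term $\frac{\mu}{2}\|\nabla u(t)\|_{L^2}^2$ yields $\|\Delta u(t)\|_{L^2}^2 < \frac{N\alpha}{4(\alpha+2)}\|u(t)\|_{L^{\alpha+2}}^{\alpha+2}$. In particular $\|\Delta u(t)\|_{L^2}>0$, since otherwise $u(t)$ would vanish identically and $K_\mu(u(t))$ would be zero, contradicting \eqref{est-K-mu}.

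Next I would insert the Gagliardo--Nirenberg inequality \eqref{GN-ineq} into the right-hand side and use $\|u(t)\|_{L^2}=\|u_0\|_{L^2}$ (conservation of mass), obtaining $\|\Delta u(t)\|_{L^2}^2 \lesssim \|\Delta u(t)\|_{L^2}^{N\alpha/4}$ with an implicit constant depending only on $N,\alpha,C_{\opt}$ and $\|u_0\|_{L^2}$. Dividing by the (strictly positive) quantity $\|\Delta u(t)\|_{L^2}^2$ and using that $\frac{N\alpha}{4}-2>0$ because $\alpha>\frac{8}{N}$, one reads off a lower bound $\|\Delta u(t)\|_{L^2}\gtrsim 1$ that is uniform in $t\in[0,T^*)$; since $\|u_0\|_{L^2}$ can be expressed through $Q$ via the Pohozaev relations, the constant indeed depends only on $u_0$ and $Q$. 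This is \eqref{claim-Delta}.

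I do not anticipate a genuine obstacle: the only points requiring care are that $\|\Delta u(t)\|_{L^2}$ never vanishes, so the division is legitimate, and that the exponent $\frac{N\alpha}{4}-2$ is strictly positive; both of these are exactly the mass-supercriticality $\alpha>\frac{8}{N}$. As an even shorter alternative one can bypass $K_\mu$ altogether and argue directly from \eqref{est-solu-blup} in Lemma \ref{lem-coer-1}: by conservation of mass, $\|\Delta u(t)\|_{L^2}\|u(t)\|_{L^2}^{\sigc}>\|\Delta Q\|_{L^2}\|Q\|_{L^2}^{\sigc}$ turns into $\|\Delta u(t)\|_{L^2}>\|\Delta Q\|_{L^2}\|Q\|_{L^2}^{\sigc}\|u_0\|_{L^2}^{-\sigc}$ for all $t\in[0,T^*)$.
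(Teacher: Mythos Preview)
Your main argument is correct and uses the same ingredients as the paper---the bound $K_\mu(u(t))\le -\delta$ from Lemma~\ref{lem-K-mu}, the Gagliardo--Nirenberg inequality, and conservation of mass---only organized as a direct estimate rather than the paper's proof by contradiction (assume a sequence $\|\Delta u(t_n)\|_{L^2}\to 0$ and derive $K_\mu(u(t_n))\to 0$). The mathematical content is the same. One small wording slip: it is $C_{\opt}$, not $\|u_0\|_{L^2}$, that is expressed through $Q$ via the Pohozaev identities; the constant depends on $u_0$ through $\|u_0\|_{L^2}$ and on $Q$ through $C_{\opt}$.

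Your alternative route is genuinely shorter than both your first argument and the paper's: invoking \eqref{est-solu-blup} from Lemma~\ref{lem-coer-1} together with mass conservation gives the explicit lower bound $\|\Delta u(t)\|_{L^2}>\|\Delta Q\|_{L^2}\bigl(\|Q\|_{L^2}/\|u_0\|_{L^2}\bigr)^{\sigc}$ in one line, with no need for $K_\mu$ or Gagliardo--Nirenberg at all. The paper does not take this shortcut, presumably because it wants to emphasize the role of $K_\mu$, which is what actually drives the blow-up argument downstream; but as a proof of the corollary itself your alternative is the cleanest option.
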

	
	\begin{proof}
		Assume by contradiction by \eqref{claim-Delta} is not true. Then there exists a time sequence $(t_n)_{n\geq 1} \subset [0,T^*)$ such that $\|\Delta u(t_n)\|_{L^2} \rightarrow 0$ as $n\rightarrow \infty$. By H\"older's inequality, we have
		\[
		\|\nabla u(t_n)\|^2_{L^2} \leq \|u(t_n)\|_{L^2} \|\Delta u(t_n)\|_{L^2} = \|u_0\|_{L^2} \|\Delta u(t_n)\|_{L^2} \rightarrow 0
		\]
		as $n\rightarrow \infty$. Moreover, by the sharp Gagliardo-Nirenberg inequality \eqref{GN-ineq}, 
		\[
		\|u(t_n)\|^{\alpha+2}_{L^{\alpha+2}} \leq C_{\opt} \|\Delta u(t_n)\|^{\frac{N\alpha}{4}}_{L^2} \|u(t_n)\|^{\frac{8-(N-4)\alpha}{4}}_{L^2} = C(u_0) \|\Delta u(t_n)\|^{\frac{N\alpha}{4}}_{L^2} \rightarrow 0
		\]
		as $n\rightarrow \infty$. It follows that
		\[
		K_\mu(u(t_n)) = \|\Delta u(t_n)\|^2_{L^2} + \frac{\mu}{2} \|\nabla u(t_n)\|^2_{L^2} - \frac{N\alpha}{4(\alpha+2)} \|u(t_n)\|^{\alpha+2}_{L^{\alpha+2}} \rightarrow 0
		\]
		as $n\rightarrow \infty$. This contradicts \eqref{est-K-mu}, and the proof is complete.
	\end{proof}

	\begin{lemma} \label{lem-est-mora-rad}
		Let $N\geq 2$, $\mu\geq 0$, $\frac{8}{N} <\alpha<\alpha^*$ and $\alpha \leq 8$. Let $u_0 \in H^2$ be radially symmetric and satisfy \eqref{cond-ener} and \eqref{cond-grad-blup}. Let $u$ be the corresponding solution to  the focusing problem \eqref{4NLS} defined on the maximal forward time interval $[0,T^*)$. Let $\varphi_R$ be as in \eqref{def-varphi-R}. Then there exists $a=a(u_0,Q)>0$ such that
		\begin{align} \label{est-mora-rad}
		\frac{d}{dt} M_{\varphi_R}(t) \leq -a \|\Delta u(t)\|^2_{L^2}
		\end{align}
		for all $t\in [0,T^*)$.
	\end{lemma}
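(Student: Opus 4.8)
The plan is to combine the radial Morawetz estimate of Lemma~\ref{lem-rad-mora-est} with a quantitative form of the negativity of the virial functional, and then absorb the lower‑order errors in the Morawetz estimate by interpolation together with the a priori lower bound on $\|\Delta u(t)\|_{L^2}$. The first and main step is to upgrade Lemma~\ref{lem-K-mu}: under \eqref{cond-ener} and \eqref{cond-grad-blup} there is $b=b(u_0,Q)>0$ with
\[
K_\mu(u(t)) \leq -b\,\|\Delta u(t)\|^2_{L^2}, \qquad t\in[0,T^*).
\]
To get this I would rerun the proof of Lemma~\ref{lem-K-mu} but keep the quantity $y(t):=\|\Delta u(t)\|_{L^2}\|u(t)\|^{\sigc}_{L^2}$ rather than merely bounding it below by $y_Q:=\|\Delta Q\|_{L^2}\|Q\|^{\sigc}_{L^2}$. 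Using $\mu\geq 0$, the conservation laws, and \eqref{energy-Q}, one gets $K_\mu(u(t))[M(u_0)]^{\sigc} \leq \tfrac{N\alpha}{4}E_\mu(u_0)[M(u_0)]^{\sigc} - \tfrac{N\alpha-8}{8}\,y(t)^2$, while by \eqref{cond-ener} there is $\theta=\theta(u_0,Q)\in(0,1)$ with $\tfrac{N\alpha}{4}E_\mu(u_0)[M(u_0)]^{\sigc} \leq (1-\theta)\tfrac{N\alpha-8}{8}\,y_Q^2$. Since $y(t)>y_Q$ by \eqref{est-solu-blup}, the bracket $(1-\theta)y_Q^2-y(t)^2$ is $<(1-\theta)y(t)^2-y(t)^2=-\theta\,y(t)^2$, and dividing by $[M(u_0)]^{\sigc}=\|u(t)\|^{2\sigc}_{L^2}$ yields the claim with $b=(N\alpha-8)\theta/8$.

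Next I would note that, by the second identity in \eqref{defi-K-mu}, the leading part of the right‑hand side of Lemma~\ref{lem-rad-mora-est} is exactly $16K_\mu(u(t))$, so, using the first step,
\[
\frac{d}{dt}M_{\varphi_R}(t) \leq -16b\,\|\Delta u(t)\|^2_{L^2} + O\!\Big(R^{-4} + \mu R^{-2} + R^{-2}\|\nabla u(t)\|^2_{L^2} + R^{-\frac{(N-1)\alpha}{2}}\|\nabla u(t)\|^{\frac{\alpha}{2}}_{L^2}\Big).
\]
By Cauchy--Schwarz and conservation of mass, $\|\nabla u(t)\|^2_{L^2}\leq \|u_0\|_{L^2}\|\Delta u(t)\|_{L^2}$, so $R^{-2}\|\nabla u(t)\|^2_{L^2}\lesssim R^{-2}(1+\|\Delta u(t)\|^2_{L^2})$ and $R^{-\frac{(N-1)\alpha}{2}}\|\nabla u(t)\|^{\frac{\alpha}{2}}_{L^2}\lesssim R^{-\frac{(N-1)\alpha}{2}}\|\Delta u(t)\|^{\frac{\alpha}{4}}_{L^2}\lesssim R^{-\frac{(N-1)\alpha}{2}}(1+\|\Delta u(t)\|^2_{L^2})$, the last bound being exactly where the hypothesis $\alpha\leq 8$ enters (so that $\alpha/4\leq 2$). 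Writing $\eta(R):=C(u_0)\big(R^{-2}+R^{-\frac{(N-1)\alpha}{2}}+R^{-4}+\mu R^{-2}\big)$, which tends to $0$ as $R\to\infty$, we obtain
\[
\frac{d}{dt}M_{\varphi_R}(t) \leq -16b\,\|\Delta u(t)\|^2_{L^2} + \eta(R)\big(1+\|\Delta u(t)\|^2_{L^2}\big).
\]

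Finally, Corollary~\ref{coro-K-mu} (equivalently \eqref{est-solu-blup}) gives $c_0=c_0(u_0,Q)>0$ with $\|\Delta u(t)\|^2_{L^2}\geq c_0^2$, which lets me absorb the stray constant via $\eta(R)\cdot 1 \leq c_0^{-2}\eta(R)\|\Delta u(t)\|^2_{L^2}$, hence $\frac{d}{dt}M_{\varphi_R}(t) \leq \big(-16b+(1+c_0^{-2})\eta(R)\big)\|\Delta u(t)\|^2_{L^2}$; choosing $R=R(u_0,Q)$ large enough that $(1+c_0^{-2})\eta(R)\leq 8b$ gives \eqref{est-mora-rad} with $a=8b$. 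I expect the delicate point to be the first step, the upgrade of Lemma~\ref{lem-K-mu}: a uniform bound $K_\mu(u(t))\leq-\delta$ does not suffice, since near the blow‑up time $\|\Delta u(t)\|_{L^2}$ is large and the error term $\eta(R)\|\Delta u(t)\|^2_{L^2}$ would dominate a fixed constant — one genuinely needs the negativity of $K_\mu$ to be proportional to $\|\Delta u(t)\|^2_{L^2}$, which forces using the full sub‑threshold condition \eqref{cond-ener} jointly with \eqref{est-solu-blup}; everything else is routine interpolation and absorption.
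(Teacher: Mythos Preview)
Your proof is correct and somewhat cleaner than the paper's. The key difference is that you first upgrade Lemma~\ref{lem-K-mu} to the quantitative bound $K_\mu(u(t))\leq -b\,\|\Delta u(t)\|^2_{L^2}$, which lets you handle all times uniformly, and then use the lower bound $\|\Delta u(t)\|_{L^2}\geq c_0$ from Corollary~\ref{coro-K-mu} to absorb the constant error $\eta(R)\cdot 1$. The paper instead keeps only the constant bound $K_\mu(u(t))\leq -\delta$ and compensates by a case split on whether $\|\Delta u(t)\|^2_{L^2}$ lies below or above a fixed threshold $\eta$: below the threshold it uses $K_\mu\leq -\delta$ and then $-\delta\leq -(\delta/\eta)\|\Delta u(t)\|^2_{L^2}$; above the threshold it drops half of $-2(N\alpha-8)\|\Delta u(t)\|^2_{L^2}$ to cancel the energy term and keeps the other half. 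Your route avoids the case distinction and the Young-with-$\varepsilon$ bookkeeping, at the cost of needing the sharpened coercivity; the paper's route is more modular since it only uses Lemma~\ref{lem-K-mu} as stated. Both arguments use the hypothesis $\alpha\leq 8$ in exactly the same place, namely to control $\|\Delta u(t)\|_{L^2}^{\alpha/4}$ by $1+\|\Delta u(t)\|_{L^2}^2$.
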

	
	\begin{proof}
		The proof is based on an argument developed in \cite{BCGJ}. Since $u$ is radially symmetric, by Lemma $\ref{lem-rad-mora-est}$, we have for any $R>0$,
		\begin{align*}
		\frac{d}{dt} M_{\varphi_R}(t) \leq 4 N\alpha E_\mu(u(t)) &-2(N\alpha-8) \|\Delta u(t)\|^2_{L^2} -2(N\alpha-4)\mu \|\nabla u(t)\|^2_{L^2} \\
		&+ O \left( R^{-4} +\mu R^{-2} + R^{-2} \|\nabla u(t)\|^2_{L^2} + R^{-\frac{(N-1)\alpha}{2}} \|\nabla u(t)\|^{\frac{\alpha}{2}}_{L^2} \right)
		\end{align*}
		for all $t\in [0,T^*)$. Using the fact that $\|\nabla u(t)\|_{L^2} \leq C(u_0) \|\Delta u(t)\|^{\frac{1}{2}}_{L^2}$, we get
		\begin{align*}
		\frac{d}{dt} M_{\varphi_R} (t) \leq 4 N\alpha E_\mu(u(t)) &-2(N\alpha-8) \|\Delta u(t)\|^2_{L^2} -2(N\alpha-4)\mu \|\nabla u(t)\|^2_{L^2} \\
		&+ O \left( R^{-4} +\mu R^{-2} + R^{-2} \|\Delta u(t)\|_{L^2} + R^{-\frac{(N-1)\alpha}{2}} \|\Delta u(t)\|^{\frac{\alpha}{4}}_{L^2} \right)
		\end{align*}
		for all $t\in [0,T^*)$. By Young's inequality, we have for any $\vareps>0$,
		\[
		R^{-2} \|\Delta u(t)\|_{L^2} \leq \vareps \|\Delta u(t)\|^2_{L^2} + C \vareps^{-1} R^{-4}
		\]
		and for $\alpha<8$,
		\[
		R^{-\frac{(N-1)\alpha}{2}} \|\Delta u(t)\|^{\frac{\alpha}{4}}_{L^2} \leq \vareps \|\Delta u(t)\|^2_{L^2} + C \vareps^{-\frac{\alpha}{8-\alpha}} R^{-\frac{4(N-1)\alpha}{8-\alpha}}.
		\]
		We thus get for any $\vareps>0$ and any $R>0$,
		\begin{align*}
		\frac{d}{dt} M_{\varphi_R}(t) &\leq 4N\alpha E_\mu (u(t)) -2(N\alpha-8) \|\Delta u(t)\|^2_{L^2} - 2(N\alpha-4)\mu\|\nabla u(t)\|^2_{L^2} \\
		&\mathrel{\phantom{\leq}} + \left\{
		\renewcommand*{\arraystretch}{1.4}
		\begin{array}{ccl}
		\left[C\vareps + CR^{-4(N-1)}\right] \|\Delta u(t)\|^2_{L^2} + O \left(R^{-4} + \mu R^{-2} + \vareps^{-1} R^{-4}\right) &\text{if}& \alpha=8, \\
		C\vareps \|\Delta u(t)\|^2_{L^2} + O \left(R^{-4} + \mu R^{-2} + \vareps^{-1} R^{-4} + \vareps^{-\frac{\alpha}{8-\alpha}} R^{-\frac{4(N-1)\alpha}{8-\alpha}} \right) &\text{if}& \alpha<8,
		\end{array}
		\right.
		\end{align*}
		for all $t\in [0,T^*)$, where $C= C(u_0,Q)>0$. 
		
		Let us now fix $t\in [0,T^*)$ and denote
		\[
		\eta:=\frac{4N\alpha |E_\mu (u_0)| +2}{N\alpha-8}.
		\]
		We consider two cases:
		
		\noindent {\bf Case 1.}
		\[
		\|\Delta u(t)\|^2_{L^2} \leq \eta.
		\]
		By Lemma $\ref{lem-K-mu}$, we have
		\[
		4N\alpha E_\mu(u(t)) - 2(N\alpha-8) \|\Delta u(t)\|^2_{L^2} - 2(N\alpha-4) \mu \|\nabla u(t)\|^2_{L^2} =16 K_\mu (u(t)) \leq -16\delta
		\]
		for all $t\in [0,T^*)$. It follows that
		\begin{align*}
		\frac{d}{dt} M_{\varphi_R}(t) \leq -16 \delta + \left\{
		\renewcommand*{\arraystretch}{1.4}
		\begin{array}{ccl}
		\left[C\vareps + CR^{-4(N-1)}\right] \eta + O \left(R^{-4} + \mu R^{-2} + \vareps^{-1} R^{-4}\right) &\text{if}& \alpha=8, \\
		C\vareps \eta + O \left(R^{-4} + \mu R^{-2} + \vareps^{-1} R^{-4} + \vareps^{-\frac{\alpha}{8-\alpha}} R^{-\frac{4(N-1)\alpha}{8-\alpha}} \right) &\text{if}& \alpha<8.
		\end{array}
		\right.
		\end{align*}
		Choosing $\vareps>0$ sufficiently small and $R>0$ sufficiently large, we get
		\[
		\frac{d}{dt}M_{\varphi_R}(t) \leq -8\delta \leq -\frac{8\delta}{\eta} \|\Delta u(t)\|^2_{L^2}.
		\]
		
		\noindent {\bf Case 2.}
		\[
		\|\Delta u(t)\|^2_{L^2} \geq \eta.
		\]
		In this case, we have
		\begin{align*}
		4N\alpha E_\mu(u(t)) -2(N\alpha-8) \|\Delta u(t)\|^2_{L^2} &- 2(N\alpha-4)\mu \|\nabla u(t)\|^2_{L^2} \\
		&\leq 4N\alpha E_\mu(u_0) - (N\alpha-8)\eta - (N\alpha-8)\|\Delta u(t)\|^2_{L^2} \\
		&\leq -2 - (N\alpha-8)\|\Delta u(t)\|^2_{L^2}.
		\end{align*}
		It yields that
		\begin{align*}
		\frac{d}{dt}M_{\varphi_R} (t) \leq -2 &-(N\alpha-8)\|\Delta u(t)\|^2_{L^2} \\
		&+ \left\{
		\renewcommand*{\arraystretch}{1.4}
		\begin{array}{ccl}
		\left[C\vareps + CR^{-4(N-1)}\right] \|\Delta u(t)\|^2_{L^2} + O \left(R^{-4} + \mu R^{-2} + \vareps^{-1} R^{-4}\right) &\text{if}& \alpha=8, \\
		C\vareps \|\Delta u(t)\|^2_{L^2} + O \left(R^{-4} + \mu R^{-2} + \vareps^{-1} R^{-4} + \vareps^{-\frac{\alpha}{8-\alpha}} R^{-\frac{4(N-1)\alpha}{8-\alpha}} \right) &\text{if}& \alpha<8.
		\end{array}
		\right.
		\end{align*}
		If $\alpha=8$, we choose $\vareps>0$ sufficiently small and $R>0$ sufficiently large so that
		\[
		N\alpha-8 - C\vareps - CR^{-4(N-1)} \geq \frac{N\alpha-8}{2}
		\]
		and
		\[
		-2+ O\left(R^{-4} + \mu R^{-2} + \vareps^{-1} R^{-4} \right) \leq 0.
		\]
		If $\alpha<8$, we choose $\vareps>0$ sufficiently small so that
		\[
		N\alpha-8 - C\vareps \geq \frac{N\alpha-8}{2}
		\]
		and then choose $R>0$ sufficiently large depending on $\vareps$ so that
		\[
		-2 + O \left(R^{-4} + \mu R^{-2} + \vareps^{-1} R^{-4} + \vareps^{-\frac{\alpha}{8-\alpha}} R^{-\frac{4(N-1)\alpha}{8-\alpha}} \right) \leq 0.
		\]
		We thus obtain
		\[
		\frac{d}{dt} M_{\varphi_R}(t) \leq -\frac{N\alpha-8}{2} \|\Delta u(t)\|^2_{L^2}.
		\]
		In both cases, the choices of $\vareps$ and $R$ are independent of $t$. We thus prove \eqref{est-mora-rad} with
		\[
		a:= \min \left\{\frac{8\delta}{\eta}, \frac{N\alpha-8}{2}\right\} >0.
		\]
		The proof is complete.
	\end{proof}

	We are now able to prove Theorem $\ref{theo-blup-inter}$. 
	
	\noindent {\it Proof of Theorem $\ref{theo-blup-inter}$.}
	Assume by contradiction that $T^*=\infty$. By \eqref{claim-Delta} and \eqref{est-mora-rad} , we see that
	\[
	\frac{d}{dt} M_{\varphi_R}(t) \leq -C
	\]
	for some $C>0$. Integrating this bound, it yields that $M_{\varphi_R}(t) <0$ for all $t\geq t_0$ with some $t_0 \gg 1$ sufficiently large. Taking the integration over $[t_0,t]$ of \eqref{est-mora-rad}, we get
	\[
	M_{\varphi_R}(t) \leq -a \int_{t_0}^t \|\Delta u(s)\|^2_{L^2} ds
	\]
	for all $t\geq t_0$. On the other hand, by H\"older's inequality and the conservation of mass, 
	\[
	|M_{\varphi_R}(t)| \leq \|\nabla \varphi_R\|_{L^\infty} \|u(t)\|_{L^2} \|\nabla u(t)\|_{L^2} \leq C(u_0,R) \|\Delta u(t)\|^{\frac{1}{2}}_{L^2}.
	\]
	We infer that
	\begin{align} \label{est-M-varphi-R}
	M_{\varphi_R}(t) \leq -A \int_{t_0}^t |M_{\varphi_R}(s)|^4 ds
	\end{align}
	for some constant $A=A(a, u_0,R)>0$. Set
	\[
	z(t):= \int_{t_0}^t |M_{\varphi_R}(s)|^4 ds, \quad t\geq t_0
	\]
	and fix some $t_1>t_0$. We see that $z(t)$ is strictly increasing, non-negative and satisfies 
	\[
	z'(t) = |M_{\varphi_R}(t)|^4 \geq A^4 [z(t)]^4.
	\]
	Integrating the above inequality over $[t_1,t]$, we get
	\[
	z(t) \geq \frac{z(t_1)}{[1-3A^4[z(t_1)]^3(t-t_1)]^{\frac{1}{3}}}
	\]
	for all $t\geq t_1$. It follows that
	\[
	z(t) \rightarrow \infty \text{ as } t \nearrow t^*:= t_1+\frac{1}{3A^4[z(t_1)]^3}.
	\] 
	By \eqref{est-M-varphi-R}, 
	\[
	M_{\varphi_R}(t) \leq -A z(t) \rightarrow -\infty \text{ as } t\nearrow t^*.
	\]
	Therefore the solution cannot exist for all time $t\geq 0$, and consequencely, we must have $T^*<\infty$. The proof is complete.
	\hfill $\Box$
	
	\subsection{Finite time blow-up in the energy critical case}
	In this subsection, we give the proof of the finite time blow-up given in Theorem $\ref{theo-blup-ener}$. Instead of using the sharp Gagliardo-Nirenberg inequality, we make use of the sharp Sobolev embedding
	\begin{align} \label{SE-ineq}
	\|f\|_{L^{\frac{2N}{N-4}}} \leq C_{\opt} \|\Delta f\|_{L^2}.
	\end{align}
	It is known (see \cite{BL}) that the optimal constant is attained by $W$, i.e.
	\[
	C_{\opt} = \|W\|_{L^{\frac{2N}{N-4}}} \div \|\Delta W\|_{L^2},
	\]
	where $W$ is the unique radial non-negative solution to \eqref{ell-equ-W}. We also have the following identities (see \cite[Appendix]{BL}):
	\begin{align}
	\|\Delta W\|^2_{L^2} &= \|W\|^{\frac{2N}{N-4}}_{L^{\frac{2N}{N-4}}}, \nonumber\\
	E_0(W) &= \frac{2}{N} \|\Delta W\|^2_{L^2}. \label{energy-W}
	\end{align}
	In particular,
	\begin{align} \label{opt-cons-SE}
	C_{\opt} = \|\Delta W\|^{-\frac{4}{N}}_{L^2} = \|W\|_{L^{\frac{2N}{N-4}}}^{-\frac{4}{N-4}} = \left[\frac{N}{2} E_0(W) \right]^{-\frac{2}{N}}.
	\end{align}
	
	\begin{lemma} \label{lem-est-solu-ener}
		Let $N\geq 5$, $\mu\geq 0$ and $\alpha=\frac{8}{N-4}$. Let $u_0 \in H^2$ satisfy \eqref{cond-ener-ener} and \eqref{cond-grad-ener}. Then the corresponding solution to  the focusing problem \eqref{4NLS} satisfies
		\begin{align} \label{est-solu-ener}
		\|\Delta u(t)\|_{L^2}> \|\Delta W\|_{L^2}
		\end{align}
		for all $t$ in the existence time.
	\end{lemma}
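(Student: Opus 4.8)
The plan is to mimic the continuity-in-time argument already used for the ground state $Q$ in Lemma~\ref{lem-coer-1}, replacing the sharp Gagliardo--Nirenberg inequality by the sharp Sobolev embedding \eqref{SE-ineq} and using the energy identities \eqref{energy-W}, \eqref{opt-cons-SE} for $W$. First I would introduce the function
\[
f(\lambda):= \frac12\lambda^2 - \frac{C_{\opt}^{\frac{2N}{N-4}}}{\frac{2N}{N-4}}\,\lambda^{\frac{2N}{N-4}},
\]
which is the energy-critical analogue of the function $g$ from Lemma~\ref{lem-coer-1}. Since $\alpha=\frac{8}{N-4}$ and $\mu\ge 0$, the Sobolev inequality \eqref{SE-ineq} gives, for each $t$ in the existence interval,
\[
E_\mu(u(t)) \;\ge\; E_0(u(t)) \;=\; \frac12\|\Delta u(t)\|_{L^2}^2 - \frac{N-4}{2N}\|u(t)\|_{L^{\frac{2N}{N-4}}}^{\frac{2N}{N-4}} \;\ge\; f\bigl(\|\Delta u(t)\|_{L^2}\bigr).
\]

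Next I would record the two facts about $f$ that make the argument work. Using $C_{\opt}=\|\Delta W\|_{L^2}^{-4/N}$ from \eqref{opt-cons-SE}, a direct computation shows
\[
f\bigl(\|\Delta W\|_{L^2}\bigr) = \frac{2}{N}\|\Delta W\|_{L^2}^2 = E_0(W),
\]
by \eqref{energy-W}, and moreover $\|\Delta W\|_{L^2}$ is exactly the unique positive critical point of $f$, which is the global maximum of $f$ on $(0,\infty)$; $f$ is strictly increasing on $(0,\|\Delta W\|_{L^2})$ and strictly decreasing afterwards, with $f(\lambda)\to -\infty$ as $\lambda\to\infty$. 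Combining this with the displayed inequality and the conservation of mass and energy, together with \eqref{cond-ener-ener}, yields
\[
f\bigl(\|\Delta u(t)\|_{L^2}\bigr) \le E_\mu(u_0) < E_0(W) = f\bigl(\|\Delta W\|_{L^2}\bigr)
\]
for all $t$ in the existence interval. Since $\|\Delta u_0\|_{L^2} > \|\Delta W\|_{L^2}$ by \eqref{cond-grad-ener}, i.e.\ $\|\Delta u_0\|_{L^2}$ lies on the decreasing branch of $f$, and $t\mapsto\|\Delta u(t)\|_{L^2}$ is continuous (from $u\in C(H^2)$), the value $\|\Delta u(t)\|_{L^2}$ can never cross down through $\|\Delta W\|_{L^2}$: if it did, by the intermediate value theorem there would be a time $t_0$ with $\|\Delta u(t_0)\|_{L^2}=\|\Delta W\|_{L^2}$, forcing $f(\|\Delta u(t_0)\|_{L^2})=E_0(W)$, contradicting the strict inequality above. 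Hence \eqref{est-solu-ener} holds on the whole existence interval.

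The only mild subtlety — and what I would treat as the ``main obstacle,'' though it is routine — is verifying the shape of $f$ and the value $f(\|\Delta W\|_{L^2})$ carefully, so that the intermediate-value trapping argument is airtight; this is purely the energy-critical transcription of the computation in \eqref{energy-Q} and follows directly from \eqref{energy-W} and \eqref{opt-cons-SE}. No new estimate is needed; the proof is essentially a one-variable calculus argument plus conservation laws, exactly parallel to the first item of Lemma~\ref{lem-coer-1}. I would note that, unlike the ground-state case, no mass factor $[M(\cdot)]^{\sigc}$ appears because $\gamc=2$ makes the problem scale-invariant at the level of $\|\Delta\cdot\|_{L^2}$ alone.
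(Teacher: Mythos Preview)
Your proposal is correct and follows essentially the same approach as the paper: the paper's function $g$ coincides with your $f$ (since $\frac{1}{2N/(N-4)}=\frac{N-4}{2N}$), and the paper likewise derives $g(\|\Delta u(t)\|_{L^2})\le E_\mu(u_0)<E_0(W)=g(\|\Delta W\|_{L^2})$ from the sharp Sobolev embedding and \eqref{opt-cons-SE}, then invokes \eqref{cond-grad-ener} and ``the continuity argument'' to conclude. Your write-up merely spells out that continuity argument via the intermediate value theorem and records the shape of $f$ explicitly; note that only conservation of energy (not mass) is actually used here.
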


	\begin{proof}
		By the sharp Sobolev embedding \eqref{SE-ineq}, we have
		\begin{align*}
		E_\mu(u(t))&=\frac{1}{2} \|\Delta u(t)\|^2_{L^2} +\frac{\mu}{2} \|\nabla u(t)\|^2_{L^2} -\frac{N-4}{2N}\|u(t)\|^{\frac{2N}{N-4}}_{L^{\frac{2N}{N-4}}} \\
		&\geq \frac{1}{2} \|\Delta u(t)\|^2_{L^2} - \frac{N-4}{2N}[C_{\opt}]^{\frac{2N}{N-4}} \|\Delta u(t)\|^{\frac{2N}{N-4}}_{L^2} \\
		&= g(\|\Delta u(t)\|_{L^2}),
		\end{align*}
		where
		\[
		g(\lambda):=\frac{1}{2}\lambda^2- \frac{N-4}{2N}[C_{\opt}]^{\frac{2N}{N-4}} \lambda^{\frac{2N}{N-4}}.
		\]
		By \eqref{opt-cons-SE}, we see that
		\[
		g(\|\Delta W\|_{L^2}) = \frac{2}{N} \|\Delta W\|^2_{L^2} = E_0(W).
		\]
		Thanks to the conservation of energy and \eqref{cond-ener-ener}, we get
		\[
		g(\|\Delta u(t)\|_{L^2}) \leq E_\mu(u(t)) = E_\mu(u_0) <E_0(W) = g(\|\Delta W\|_{L^2})
		\]
		for all $t$ in the existence time. By \eqref{cond-grad-ener}, the continuity argument yields
		\[
		\|\Delta u(t)\|_{L^2} >\|\Delta W\|_{L^2}
		\]
		for all $t$ in the existence time. 
	\end{proof}

	\begin{lemma} \label{lem-K-mu-ener}
		Let $N\geq 5$, $\mu \geq 0$ and $\alpha=\frac{8}{N-4}$. Let $u_0 \in H^2$ satisfy \eqref{cond-ener-ener} and \eqref{cond-grad-ener}. Let $u$ be the corresponding solution to  the focusing problem \eqref{4NLS} defined on the maximal forward time interval $[0,T^*)$. Then there exists $\delta= \delta(u_0,W)>0$ such that for any $t\in [0,T^*)$,
		\begin{align} \label{est-K-mu-ener}
		K_\mu(u(t)) \leq -\delta,
		\end{align}
		where $K_\mu$ is as in \eqref{defi-K-mu}.	
	\end{lemma}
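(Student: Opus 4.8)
The plan is to mirror the argument of Lemma~\ref{lem-K-mu}, replacing the sharp Gagliardo--Nirenberg inequality by the sharp Sobolev embedding \eqref{SE-ineq} and the ground state $Q$ by $W$. First I would specialize the second expression for $K_\mu$ in \eqref{defi-K-mu} to the energy-critical exponent $\alpha=\frac{8}{N-4}$. Then $N\alpha=\frac{8N}{N-4}$, so $N\alpha-8=\frac{32}{N-4}$, $N\alpha-4=\frac{4(N+4)}{N-4}$, and crucially $\frac{N\alpha}{4(\alpha+2)}=1$, giving
\[
K_\mu(u(t))=\frac{2N}{N-4}E_\mu(u(t))-\frac{4}{N-4}\|\Delta u(t)\|_{L^2}^2-\frac{(N+4)\mu}{2(N-4)}\|\nabla u(t)\|_{L^2}^2 .
\]
Since $\mu\geq 0$, the last term is nonnegative and may be discarded; combined with the conservation of energy this yields $K_\mu(u(t))\leq \frac{2N}{N-4}E_\mu(u_0)-\frac{4}{N-4}\|\Delta u(t)\|_{L^2}^2$ for all $t\in[0,T^*)$.

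Next I would exploit the two hypotheses. By \eqref{cond-ener-ener} there is $\theta=\theta(u_0,W)>0$ with $E_\mu(u_0)<(1-\theta)E_0(W)$, and by \eqref{energy-W} we may rewrite $E_0(W)=\frac{2}{N}\|\Delta W\|_{L^2}^2$. On the other hand Lemma~\ref{lem-est-solu-ener} (the continuity/trapping argument already carried out just above, using \eqref{cond-grad-ener}) gives $\|\Delta u(t)\|_{L^2}>\|\Delta W\|_{L^2}$, hence $\|\Delta u(t)\|_{L^2}^2>\|\Delta W\|_{L^2}^2$, for all $t$ in the existence time. Inserting both bounds into the previous inequality,
\[
K_\mu(u(t))< \frac{2N}{N-4}\,(1-\theta)\,\frac{2}{N}\|\Delta W\|_{L^2}^2-\frac{4}{N-4}\|\Delta W\|_{L^2}^2=-\frac{4\theta}{N-4}\|\Delta W\|_{L^2}^2 ,
\]
so \eqref{est-K-mu-ener} holds with $\delta:=\frac{4\theta}{N-4}\|\Delta W\|_{L^2}^2>0$, which depends only on $u_0$ and $W$.

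There is essentially no serious obstacle here. The only point requiring care is the energy-critical bookkeeping: verifying that the coefficient $\frac{N\alpha}{4(\alpha+2)}$ really collapses to $1$, and that after using $E_0(W)=\frac{2}{N}\|\Delta W\|_{L^2}^2$ the two constants in front of $\|\Delta W\|_{L^2}^2$ cancel up to the factor $\theta$. One must also remember to invoke Lemma~\ref{lem-est-solu-ener} for the lower bound on $\|\Delta u(t)\|_{L^2}$ in place of the Gagliardo--Nirenberg-based estimate \eqref{est-solu-blup} used in the intercritical case. No concentration--compactness, Morawetz, or Strichartz input enters this particular step.
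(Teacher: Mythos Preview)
Your proposal is correct and follows essentially the same approach as the paper's own proof: specialize the identity \eqref{defi-K-mu} to $\alpha=\frac{8}{N-4}$, drop the nonpositive gradient term, invoke the strict energy gap \eqref{cond-ener-ener} together with \eqref{energy-W}, and use Lemma~\ref{lem-est-solu-ener} for the lower bound on $\|\Delta u(t)\|_{L^2}$, arriving at the identical constant $\delta=\frac{4\theta}{N-4}\|\Delta W\|_{L^2}^2$.
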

	
	\begin{proof}
		We have
		\begin{align*}
		K_\mu(u(t)) &= \frac{2N}{N-4} E_\mu(u(t)) - \frac{4}{N-4}\|\Delta u(t)\|^2_{L^2} - \frac{(N+4)\mu}{2(N-4)} \|\nabla u(t)\|^2_{L^2} \\
		&\leq \frac{2N}{N-4} E_\mu (u(t))  - \frac{4}{N-4} \|\Delta u(t)\|^2_{L^2} 
		\end{align*}
		for all $t\in [0,T^*)$. By \eqref{cond-ener-ener} and \eqref{energy-W}, there exists $\theta=\theta(u_0,W)>0$ such that
		\[
		E_\mu(u_0) < (1-\theta) E_0(Q) = (1-\theta) \frac{2}{N}\|\Delta W\|^2_{L^2}.
		\]
		This together with \eqref{est-solu-ener} imply that
		\begin{align*}
		K_\mu(u(t)) \leq (1-\theta)\frac{4}{N-4} \|\Delta W\|^2_{L^2} - \frac{4}{N-4} \|\Delta W\|^2_{L^2} = - \frac{4\theta}{N-4} \|\Delta W\|^2_{L^2}=:-\delta
		\end{align*}
		for all $t \in [0,T^*)$. The proof is complete.
	\end{proof}
	
	\begin{lemma} \label{lem-est-mora-rad-ener}
		Let $N\geq 5$, $\mu\geq 0$ and $\alpha=\frac{8}{N-4}$. Let $u_0 \in H^2$ be radially symmetric and satisfy \eqref{cond-ener-ener} and \eqref{cond-grad-ener}. Let $u$ be the corresponding solution to  the focusing problem \eqref{4NLS} defined on the maximal forward time interval $[0,T^*)$. Let $\varphi_R$ be as in \eqref{def-varphi-R}. Then there exists $a=a(u_0,W)>0$ such that
		\begin{align} \label{est-mora-rad-ener}
		\frac{d}{dt} M_{\varphi_R}(t) \leq -a \|\Delta u(t)\|^2_{L^2}
		\end{align}
		for all $t\in [0,T^*)$.
	\end{lemma}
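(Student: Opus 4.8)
The plan is to run the argument of Lemma~\ref{lem-est-mora-rad} verbatim, with the sharp Gagliardo--Nirenberg inequality replaced by the sharp Sobolev embedding~\eqref{SE-ineq}. Observe first that for $\alpha=\frac{8}{N-4}$ and $N\geq 5$ one automatically has $\alpha\leq 8$, with equality precisely when $N=5$; thus the dichotomy $\alpha=8$ versus $\alpha<8$ from Lemma~\ref{lem-est-mora-rad} persists. Moreover $N\alpha=\frac{8N}{N-4}$, so $N\alpha-8=\frac{32}{N-4}>0$ and $N\alpha-4=\frac{4(N+4)}{N-4}>0$, which means all the coefficients appearing in the radial Morawetz identity of Lemma~\ref{lem-rad-mora-est} carry the favorable sign.

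First I would apply Lemma~\ref{lem-rad-mora-est} to get, for any $R>0$,
\begin{align*}
\frac{d}{dt} M_{\varphi_R}(t) \leq 4N\alpha E_\mu(u(t)) &- 2(N\alpha-8)\|\Delta u(t)\|_{L^2}^2 - 2(N\alpha-4)\mu\|\nabla u(t)\|_{L^2}^2 \\
&+ O\left(R^{-4} + \mu R^{-2} + R^{-2}\|\nabla u(t)\|_{L^2}^2 + R^{-\frac{(N-1)\alpha}{2}}\|\nabla u(t)\|_{L^2}^{\frac{\alpha}{2}}\right).
\end{align*}
Using $\|\nabla u(t)\|_{L^2}^2\leq\|u(t)\|_{L^2}\|\Delta u(t)\|_{L^2}=\|u_0\|_{L^2}\|\Delta u(t)\|_{L^2}$, the two $u$-dependent error terms become $O(R^{-2}\|\Delta u(t)\|_{L^2}+R^{-\frac{(N-1)\alpha}{2}}\|\Delta u(t)\|_{L^2}^{\alpha/4})$. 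For $\alpha<8$ both are absorbed by Young's inequality into $\vareps\|\Delta u(t)\|_{L^2}^2$ plus $\vareps$- and $R$-dependent constants; for $\alpha=8$ the last term equals $R^{-4(N-1)}\|\Delta u(t)\|_{L^2}^2$, so it contributes a small coefficient $CR^{-4(N-1)}$ in front of $\|\Delta u(t)\|_{L^2}^2$ rather than being Young-split, exactly as in Lemma~\ref{lem-est-mora-rad}. By \eqref{defi-K-mu} the leading three terms equal $16K_\mu(u(t))$, and by Lemma~\ref{lem-K-mu-ener} (which rests on \eqref{SE-ineq}, \eqref{energy-W}, the hypotheses \eqref{cond-ener-ener}, \eqref{cond-grad-ener}, and Lemma~\ref{lem-est-solu-ener}) we have $K_\mu(u(t))\leq-\delta$ on $[0,T^*)$.

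With these ingredients, I would fix $t\in[0,T^*)$, set $\eta:=\frac{4N\alpha|E_\mu(u_0)|+2}{N\alpha-8}$, and split into the two cases of Lemma~\ref{lem-est-mora-rad}. If $\|\Delta u(t)\|_{L^2}^2\leq\eta$, bound $16K_\mu(u(t))\leq-16\delta$ and pick $\vareps$ small, $R$ large to obtain $\frac{d}{dt}M_{\varphi_R}(t)\leq-8\delta\leq-\frac{8\delta}{\eta}\|\Delta u(t)\|_{L^2}^2$. If $\|\Delta u(t)\|_{L^2}^2\geq\eta$, use $E_\mu(u(t))=E_\mu(u_0)$ and the definition of $\eta$ to get $16K_\mu(u(t))\leq-2-(N\alpha-8)\|\Delta u(t)\|_{L^2}^2$, and then choose $\vareps$ small (and, when $\alpha=8$, also $R$ large so $CR^{-4(N-1)}$ is negligible) so that the $\vareps$-term absorbs a fraction of $(N\alpha-8)\|\Delta u(t)\|_{L^2}^2$, while $R$ large enough makes the remaining $R$-dependent constants dominated by the $-2$; this yields $\frac{d}{dt}M_{\varphi_R}(t)\leq-\frac{N\alpha-8}{2}\|\Delta u(t)\|_{L^2}^2$. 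Since the choices of $\vareps$ and $R$ depend only on $u_0$ and $W$, not on $t$, this establishes \eqref{est-mora-rad-ener} with $a:=\min\{8\delta/\eta,\ (N\alpha-8)/2\}>0$.

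The only delicate point is again the boundary case $N=5$, $\alpha=8$, where Young's inequality is unavailable for the last error term; this is handled exactly as in Lemma~\ref{lem-est-mora-rad} by treating it as a small multiple of $\|\Delta u(t)\|_{L^2}^2$. No genuinely new obstacle arises beyond the bookkeeping already present in the intercritical argument.
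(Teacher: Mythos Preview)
Your proof is correct and follows exactly the same route as the paper's: apply Lemma~\ref{lem-rad-mora-est}, absorb the error terms via Young's inequality (with the $N=5$/$\alpha=8$ case handled separately), split according to whether $\|\Delta u(t)\|_{L^2}^2$ is below or above the threshold $\eta$, and use Lemma~\ref{lem-K-mu-ener} in the first case and energy conservation in the second. The only cosmetic difference is that you keep the general $N\alpha$ notation from Lemma~\ref{lem-est-mora-rad} whereas the paper substitutes $\alpha=\frac{8}{N-4}$ to write the coefficients and $\eta$ explicitly; your $\eta=\frac{4N\alpha|E_\mu(u_0)|+2}{N\alpha-8}$ and $a=\min\{8\delta/\eta,(N\alpha-8)/2\}$ reduce to the paper's $\eta=N|E_\mu(u_0)|+\frac{N-4}{16}$ and $a=\min\{8\delta/\eta,\frac{16}{N-4}\}$.
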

	
	\begin{proof}
		The proof is similar to that of Lemm $\ref{lem-est-mora-rad}$. For the reader's convenience, we give some details. Since $u$ is radially symmetric, we apply Lemma $\ref{lem-rad-mora-est}$ to have for any $R>0$,
		\begin{align*}
		\frac{d}{dt} M_{\varphi_R}(t) \leq \frac{32N}{N-4} E_\mu(u(t)) &-\frac{64}{N-4} \|\Delta u(t)\|^2_{L^2} -\frac{8(N+4)\mu}{N-4} \|\nabla u(t)\|^2_{L^2} \\
		&+ O \left( R^{-4} +\mu R^{-2} + R^{-2} \|\nabla u(t)\|^2_{L^2} + R^{-\frac{4(N-1)}{N-4}} \|\nabla u(t)\|^{\frac{4}{N-4}}_{L^2} \right)
		\end{align*}
		for all $t\in [0,T^*)$. Using the fact that $\|\nabla u(t)\|_{L^2} \leq C(u_0) \|\Delta u(t)\|^{\frac{1}{2}}_{L^2}$, we get
		\begin{align*}
		\frac{d}{dt} M_{\varphi_R} (t) \leq \frac{32N}{N-4} E_\mu(u(t)) &-\frac{64}{N-4} \|\Delta u(t)\|^2_{L^2} -\frac{8(N+4)\mu}{N-4} \|\nabla u(t)\|^2_{L^2} \\
		&+ O \left( R^{-4} +\mu R^{-2} + R^{-2} \|\Delta u(t)\|_{L^2} + R^{-\frac{4(N-1)}{N-4}} \|\Delta u(t)\|^{\frac{2}{N-4}}_{L^2} \right)
		\end{align*}
		for all $t\in [0,T^*)$. By the Young's inequality, we get for any $\vareps>0$ and any $R>0$,
		\begin{align*}
		\frac{d}{dt} M_{\varphi_R}(t) &\leq \frac{32N}{N-4} E_\mu (u(t)) -\frac{64}{N-4} \|\Delta u(t)\|^2_{L^2} - \frac{8(N+4)\mu}{N-4}\|\nabla u(t)\|^2_{L^2} \\
		&\mathrel{\phantom{\leq}} + \left\{
		\renewcommand*{\arraystretch}{1.4}
		\begin{array}{ccl}
		\left[C\vareps + CR^{-16}\right] \|\Delta u(t)\|^2_{L^2} + O \left(R^{-4} + \mu R^{-2} + \vareps^{-1} R^{-4}\right) &\text{if}& N=5, \\
		C\vareps \|\Delta u(t)\|^2_{L^2} + O \left(R^{-4} + \mu R^{-2} + \vareps^{-1} R^{-4} + \vareps^{-\frac{1}{N-5}} R^{-\frac{4(N-1)}{N-5}} \right) &\text{if}& N\geq 6,
		\end{array}
		\right.
		\end{align*}
		for all $t\in [0,T^*)$, where $C= C(u_0,W)>0$. 
		
		Let us now fix $t\in [0,T^*)$ and denote
		\[
		\eta:=N|E_\mu(u_0)| + \frac{N-4}{16}.
		\]
		We consider two cases:
		
		\noindent {\bf Case 1.}
		\[
		\|\Delta u(t)\|^2_{L^2} \leq \eta.
		\]
		By \eqref{est-K-mu-ener}, we have
		\[
		\frac{32N}{N-4} E_\mu(u(t)) - \frac{64}{N-4} \|\Delta u(t)\|^2_{L^2} - \frac{8(N+4)\mu}{N-4}  \|\nabla u(t)\|^2_{L^2} =16 K_\mu (u(t)) \leq -16\delta
		\]
		for all $t\in [0,T^*)$. It follows that
		\begin{align*}
		\frac{d}{dt} M_{\varphi_R}(t) \leq -16 \delta + \left\{
		\renewcommand*{\arraystretch}{1.4}
		\begin{array}{ccl}
		\left[C\vareps + CR^{-16}\right] \eta + O \left(R^{-4} + \mu R^{-2} + \vareps^{-1} R^{-4}\right) &\text{if}& N=5, \\
		C\vareps \eta + O \left(R^{-4} + \mu R^{-2} + \vareps^{-1} R^{-4} + \vareps^{-\frac{1}{N-5}} R^{-\frac{4(N-1)}{N-5}} \right) &\text{if}& N\geq 6.
		\end{array}
		\right.
		\end{align*}
		By choosing $\vareps>0$ sufficiently small and $R>0$ sufficiently large, we get
		\[
		\frac{d}{dt}M_{\varphi_R}(t) \leq -8\delta \leq -\frac{8\delta}{\eta} \|\Delta u(t)\|^2_{L^2}.
		\]
		
		\noindent {\bf Case 2.}
		\[
		\|\Delta u(t)\|^2_{L^2} \geq \eta.
		\]
		In this case, we have
		\begin{align*}
		\frac{32N}{N-4} E_\mu(u(t)) -\frac{64}{N-4} \|\Delta u(t)\|^2_{L^2} &- \frac{8(N+4)\mu}{N-4} \|\nabla u(t)\|^2_{L^2} \\
		&\leq \frac{32N}{N-4} E_\mu(u_0) - \frac{32}{N-4}\eta - \frac{32}{N-4}\|\Delta u(t)\|^2_{L^2} \\
		&\leq -2 - \frac{32}{N-4}\|\Delta u(t)\|^2_{L^2}.
		\end{align*}
		It yields that
		\begin{align*}
		\frac{d}{dt}M_{\varphi_R} (t) \leq -2 &-\frac{32}{N-4}\|\Delta u(t)\|^2_{L^2} \\
		&+ \left\{
		\renewcommand*{\arraystretch}{1.4}
		\begin{array}{ccl}
		\left[C\vareps + CR^{-16}\right] \|\Delta u(t)\|^2_{L^2} + O \left(R^{-4} + \mu R^{-2} + \vareps^{-1} R^{-4}\right) &\text{if}& N=5, \\
		C\vareps \|\Delta u(t)\|^2_{L^2} + O \left(R^{-4} + \mu R^{-2} + \vareps^{-1} R^{-4} + \vareps^{-\frac{1}{N-5}} R^{-\frac{4(N-1)}{N-5}} \right) &\text{if}& N\geq 6.
		\end{array}
		\right.
		\end{align*}
		If $N=5$, we choose $\vareps>0$ sufficiently small and $R>0$ sufficiently large so that
		\[
		32 - C\vareps - CR^{-16} \geq 16
		\]
		and
		\[
		-2+ O\left(R^{-4} + \mu R^{-2} + \vareps^{-1} R^{-4} \right) \leq 0.
		\]
		If $N\geq 6$, we choose $\vareps>0$ sufficiently small so that
		\[
		\frac{32}{N-4} - C\vareps \geq \frac{16}{N-4}
		\]
		and then choose $R>0$ sufficiently large depending on $\vareps$ so that
		\[
		-2 + O \left(R^{-4} + \mu R^{-2} + \vareps^{-1} R^{-4} + \vareps^{-\frac{1}{N-5}} R^{-\frac{4(N-1)}{N-5}} \right) \leq 0.
		\]
		We thus obtain
		\[
		\frac{d}{dt} M_{\varphi_R}(t) \leq -\frac{16}{N-4} \|\Delta u(t)\|^2_{L^2}.
		\]
		In both cases, the choices of $\vareps$ and $R$ are independent of $t$. We thus prove \eqref{est-mora-rad-ener} with
		\[
		a:= \min \left\{\frac{8\delta}{\eta}, \frac{16}{N-4}\right\} >0.
		\]
		The proof is complete.
	\end{proof}
	
	We are now able to prove Theorem $\ref{theo-blup-ener}$. 
	
	\noindent {\it Proof of Theorem $\ref{theo-blup-ener}$.}
	The proof is completely similar to that of Theorem $\ref{theo-blup-inter}$ using \eqref{est-solu-ener} and \eqref{est-mora-rad-ener}. We thus omit the details.
	\hfill $\Box$

	\section*{Acknowledgement}
	This work was supported in part by the Labex CEMPI (ANR-11-LABX-0007-01). V. D. D. would like to express his deep gratitude to his wife - Uyen Cong for her encouragement and support. 
	
	\begin{bibdiv}
		\begin{biblist}
			
			\bib{BF}{article}{
				author={Baruch, G.},
				author={Fibich, G.},
				title={Singular solutions of the $L^2$-supercritical biharmonic nonlinear
					Schr\"{o}dinger equation},
				journal={Nonlinearity},
				volume={24},
				date={2011},
				number={6},
				pages={1843--1859},
				issn={0951-7715},
			}		
			
			\bib{BFM}{article}{
				author={Baruch, G.},
				author={Fibich, G.},
				author={Mandelbaum, E.},
				title={Ring-type singular solutions of the biharmonic nonlinear
					Schr\"{o}dinger equation},
				journal={Nonlinearity},
				volume={23},
				date={2010},
				number={11},
				pages={2867--2887},
				issn={0951-7715},
			}		
			
			\bib{BFM-SIAM}{article}{
				author={Baruch , G.},
				author={Fibich, G.},
				author={Mandelbaum, E.},
				title={Singular solutions of the biharmonic nonlinear Schr\"{o}dinger
					equation},
				journal={SIAM J. Appl. Math.},
				volume={70},
				date={2010},
				number={8},
				pages={3319--3341},
				issn={0036-1399},
			}
			
			\bib{BaKS}{article}{
				author={Ben-Artzi, M.},
				author={Koch, H.},
				author={Saut, J. C.},
				title={Dispersion estimates for fourth order Schr\"{o}dinger equations},
				language={English, with English and French summaries},
				journal={C. R. Acad. Sci. Paris S\'{e}r. I Math.},
				volume={330},
				date={2000},
				number={2},
				pages={87--92},
				issn={0764-4442},
			}
			
			\bib{BCGJ}{article}{
				author={Bonheure, D.},
				author={Cast\'{e}ras, J. B.},
				author={Gou, T.},
				author={Jeanjean, L.},
				title={Strong instability of ground states to a fourth order Schr\"{o}dinger
					equation},
				journal={Int. Math. Res. Not. IMRN},
				date={2019},
				number={17},
				pages={5299--5315},
				issn={1073-7928},
			}
			
			\bib{BL}{article}{
				author={Boulenger, T.},
				author={Lenzmann, E.},
				title={Blowup for biharmonic NLS},
				language={English, with English and French summaries},
				journal={Ann. Sci. \'{E}c. Norm. Sup\'{e}r. (4)},
				volume={50},
				date={2017},
				number={3},
				pages={503--544},
				issn={0012-9593},
			}

			\bib{Cazenave}{book}{
				author={Cazenave, T.},
				title={Semilinear Schr\"{o}dinger equations},
				series={Courant Lecture Notes in Mathematics},
				volume={10},
				publisher={New York University, Courant Institute of Mathematical
					Sciences, New York; American Mathematical Society, Providence, RI},
				date={2003},
				pages={xiv+323},
				isbn={0-8218-3399-5},
			}	
			
			\bib{Dinh-BBMS}{article}{
				author={Dinh, V. D.},
				title={On well-posedness, regularity and ill-posedness for the nonlinear
					fourth-order Schr\"{o}dinger equation},
				journal={Bull. Belg. Math. Soc. Simon Stevin},
				volume={25},
				date={2018},
				number={3},
				pages={415--437},
			}
			
			\bib{Dinh-DPDE}{article}{
				author={Dinh , V. D.},
				title={On the focusing mass-critical nonlinear fourth-order Schr\"{o}dinger
					equation below the energy space},
				journal={Dyn. Partial Differ. Equ.},
				volume={14},
				date={2017},
				number={3},
				pages={295--320},
				issn={1548-159X},
			}
			
			\bib{Dinh-NA}{article}{
				author={Dinh, V.  D.},
				title={Global existence and scattering for a class of nonlinear
					fourth-order Schr\"{o}dinger equation below the energy space},
				journal={Nonlinear Anal.},
				volume={172},
				date={2018},
				pages={115--140},
				issn={0362-546X},
			}
			
			\bib{Dinh-JDDE}{article}{
				author={V. D. Dinh},
				title={On blowup solutions to the focusing intercritical nonlinear
					fourth-order Schr\"{o}dinger equation},
				journal={J. Dynam. Differential Equations},
				volume={31},
				date={2019},
				number={4},
				pages={1793--1823},
				issn={1040-7294},
			}
			
			\bib{DK}{article}{
				author={Dinh, V.  D.},
				author={Keraani, S.},
				title={The Sobolev-Morawetz approach for the energy scattering of nonlinear Schr\"{o}dinger-type equations with radial data},
				journal={Discrete Contin. Dyn. Syst. Ser. S (in press)},
			}
			
			\bib{DM}{article}{
				author={Dodson, B.},
				author={Murphy, J.},
				title={A new proof of scattering below the ground state for the 3D radial
					focusing cubic NLS},
				journal={Proc. Amer. Math. Soc.},
				volume={145},
				date={2017},
				number={11},
				pages={4859--4867},
				issn={0002-9939},
			}
			
			\bib{FIP}{article}{
				author={Fibich, G.},
				author={Ilan, B.},
				author={Papanicolaou, G.},
				title={Self-focusing with fourth-order dispersion},
				journal={SIAM J. Appl. Math.},
				volume={62},
				date={2002},
				number={4},
				pages={1437--1462},
				issn={0036-1399},
			}
			
			\bib{Foschi}{article}{
				author={Foschi, D.},
				title={Inhomogeneous Strichartz estimates},
				journal={J. Hyperbolic Differ. Equ.},
				volume={2},
				date={2005},
				number={1},
				pages={1--24},
				issn={0219-8916},
			}
			
			\bib{Guo}{article}{
				author={Guo, Q.},
				title={Scattering for the focusing $L^2$-supercritical and
					$\dot{H}^2$-subcritical biharmonic NLS equations},
				journal={Comm. Partial Differential Equations},
				volume={41},
				date={2016},
				number={2},
				pages={185--207},
				issn={0360-5302},
			}
			
			\bib{HK}{article}{
				author={Hmidi, T.},
				author={Keraani, S.},
				title={Blowup theory for the critical nonlinear Schr\"{o}dinger equations
					revisited},
				journal={Int. Math. Res. Not.},
				date={2005},
				number={46},
				pages={2815--2828},
				issn={1073-7928},
			}
			
			\bib{HR}{article}{
				author={Holmer, J.},
				author={Roudenko, S.},
				title={A sharp condition for scattering of the radial 3D cubic nonlinear
					Schr\"{o}dinger equation},
				journal={Comm. Math. Phys.},
				volume={282},
				date={2008},
				number={2},
				pages={435--467},
				issn={0010-3616},
			}
			
			\bib{Kato}{article}{
				author={Kato, T.},
				title={On nonlinear Schr\"{o}dinger equations. II. $H^s$-solutions and
					unconditional well-posedness},
				journal={J. Anal. Math.},
				volume={67},
				date={1995},
				pages={281--306},
				issn={0021-7670},
			}
			
			\bib{KT}{article}{
				author={Keel, M.},
				author={Tao, T.},
				title={Endpoint Strichartz estimates},
				journal={Amer. J. Math.},
				volume={120},
				date={1998},
				number={5},
				pages={955--980},
				issn={0002-9327},
			}
			
			\bib{Karpman}{article}{
				title = {Stabilization of soliton instabilities by higher-order dispersion: Fourth-order nonlinear Schr\"{o}dinger-type equations},
				author = {Karpman, V. I.},
				journal = {Phys. Rev. E},
				volume = {53},
				issue = {2},
				pages = {R1336--R1339},
				numpages = {0},
				year = {1996},
				month = {Feb},
				publisher = {American Physical Society},
			}
			
			\bib{KS}{article}{
				author={Karpman, V. I.},
				author={Shagalov, A. G.},
				title={Solitons and their stability in high dispersive systems. I.
					Fourth-order nonlinear Schr\"{o}dinger-type equations with power-law
					nonlinearities},
				journal={Phys. Lett. A},
				volume={228},
				date={1997},
				number={1-2},
				pages={59--65},
				issn={0375-9601},
			}
			
			\bib{KM}{article}{
				author={Kenig, C. E.},
				author={Merle, F.},
				title={Global well-posedness, scattering and blow-up for the
					energy-critical, focusing, non-linear Schr\"{o}dinger equation in the radial
					case},
				journal={Invent. Math.},
				volume={166},
				date={2006},
				number={3},
				pages={645--675},
				issn={0020-9910},
			}
			
			\bib{Lions}{article}{
				author={Lions, P. L.},
				title={The concentration-compactness principle in the calculus of
					variations. The locally compact case. I},
				language={English, with French summary},
				journal={Ann. Inst. H. Poincar\'{e} Anal. Non Lin\'{e}aire},
				volume={1},
				date={1984},
				number={2},
				pages={109--145},
				issn={0294-1449},
			}
			
			\bib{MXZ-1}{article}{
				author={Miao, C.},
				author={Xu, G.},
				author={Zhao, L.},
				title={Global well-posedness and scattering for the focusing
					energy-critical nonlinear Schr\"{o}dinger equations of fourth order in the
					radial case},
				journal={J. Differential Equations},
				volume={246},
				date={2009},
				number={9},
				pages={3715--3749},
				issn={0022-0396},
			}

			\bib{MXZ-2}{article}{
				author={Miao , C.},
				author={Xu, G.},
				author={Zhao, L.},
				title={Global well-posedness and scattering for the defocusing
					energy-critical nonlinear Schr\"{o}dinger equations of fourth order in
					dimensions $d\geqslant9$},
				journal={J. Differential Equations},
				volume={251},
				date={2011},
				number={12},
				pages={3381--3402},
				issn={0022-0396},
			}
			
			\bib{MWZ}{article}{
				author={Miao, C.},
				author={Wu, H.},
				author={Zhang, J.},
				title={Scattering theory below energy for the cubic fourth-order
					Schr\"{o}dinger equation},
				journal={Math. Nachr.},
				volume={288},
				date={2015},
				number={7},
				pages={798--823},
				issn={0025-584X},
			}

			\bib{MZ}{article}{
				author={Miao, C.},
				author={Zheng, J.},
				title={Scattering theory for the defocusing fourth-order Schr\"{o}dinger
					equation},
				journal={Nonlinearity},
				volume={29},
				date={2016},
				number={2},
				pages={692--736},
				issn={0951-7715},
			}
			
			\bib{Pausader-DPDE}{article}{
				author={Pausader, B.},
				title={Global well-posedness for energy critical fourth-order Schr\"{o}dinger
					equations in the radial case},
				journal={Dyn. Partial Differ. Equ.},
				volume={4},
				date={2007},
				number={3},
				pages={197--225},
				issn={1548-159X},
			}
			
			\bib{Pausader-DCDS}{article}{
				author={Pausader , B.},
				title={The focusing energy-critical fourth-order Schr\"{o}dinger equation
					with radial data},
				journal={Discrete Contin. Dyn. Syst.},
				volume={24},
				date={2009},
				number={4},
				pages={1275--1292},
				issn={1078-0947},
			}
			
			\bib{Pausader-JFA}{article}{
				author={Pausader, B. },
				title={The cubic fourth-order Schr\"{o}dinger equation},
				journal={J. Funct. Anal.},
				volume={256},
				date={2009},
				number={8},
				pages={2473--2517},
				issn={0022-1236},
			}
			
			\bib{PS}{article}{
				author={Pausader, B.},
				author={Shao, S.},
				title={The mass-critical fourth-order Schr\"{o}dinger equation in high
					dimensions},
				journal={J. Hyperbolic Differ. Equ.},
				volume={7},
				date={2010},
				number={4},
				pages={651--705},
				issn={0219-8916},
			}
			
			\bib{PX}{article}{
				author={Pausader, B.},
				author={Xia, S.},
				title={Scattering theory for the fourth-order Schr\"{o}dinger equation in low
					dimensions},
				journal={Nonlinearity},
				volume={26},
				date={2013},
				number={8},
				pages={2175--2191},
				issn={0951-7715},
			}
			
			\bib{Strauss}{article}{
				author={Strauss, W. A.},
				title={Existence of solitary waves in higher dimensions},
				journal={Comm. Math. Phys.},
				volume={55},
				date={1977},
				number={2},
				pages={149--162},
				issn={0010-3616},
			}
			
			\bib{Vilela}{article}{
				author={Vilela, M. C.},
				title={Inhomogeneous Strichartz estimates for the Schr\"{o}dinger equation},
				journal={Trans. Amer. Math. Soc.},
				volume={359},
				date={2007},
				number={5},
				pages={2123--2136},
				issn={0002-9947},
			}
			
			\bib{ZYZ-DPDE}{article}{
				author={Zhu, S.},
				author={Zhang, J.},
				author={Yang, H.},
				title={Limiting profile of the blow-up solutions for the fourth-order
					nonlinear Schr\"{o}dinger equation},
				journal={Dyn. Partial Differ. Equ.},
				volume={7},
				date={2010},
				number={2},
				pages={187--205},
				issn={1548-159X},
			}
			
			\bib{ZYZ-NA}{article}{
				author={Zhu, S.},
				author={Yang, H.},
				author={Zhang, J.},
				title={Blow-up of rough solutions to the fourth-order nonlinear
					Schr\"{o}dinger equation},
				journal={Nonlinear Anal.},
				volume={74},
				date={2011},
				number={17},
				pages={6186--6201},
				issn={0362-546X},
			}
			
		\end{biblist}
	\end{bibdiv}
	
\end{document}